\newtheorem{thm}{Theorem}[section]
\newtheorem*{thmA}{Theorem~A}
\newtheorem*{thmB}{Theorem~B}
\newtheorem{lem}[thm]{Lemma}
\newtheorem{cor}[thm]{Corollary}
\newtheorem{prop}[thm]{Proposition}
\newtheorem*{conjecture*}{Conjecture}
\newtheorem*{guess*}{Philosophy}
\theoremstyle{remark} 
\newtheorem*{question*}{Question}
\newtheorem{remark}[thm]{Remark}
\theoremstyle{definition} 
\newtheorem*{convention*}{Convention}
\numberwithin{equation}{section}  
\newcommand\C{{\mathbb C}}
\newcommand\Chat { {\hat{\C}} } 
\newcommand\D{{\mathbb D}}
\renewcommand\P{{\mathbb P}}
\newcommand\eps{\varepsilon}
\renewcommand\gcd {\operatorname{gcd}} 
\newcommand\Ratbar {\overline{\Rat}} 
\newcommand\del{\partial}
\newcommand{\OO}{\mathcal{O}}    
\newcommand{\PP}{\mathbb{P}}      
\newcommand{\XX}{\mathfrak{X}}      
\newcommand{\CC}{\mathbb{C}}      
\newcommand{\be}{\begin{equation}}
\newcommand{\ee}{\end{equation}}
\newcommand{\benn}{\begin{equation*}}
\newcommand{\eenn}{\end{equation*}}
\newcommand{\ba}{\begin{aligned}}
\newcommand{\ea}{\end{aligned}}
\newcommand{\bbm}{\begin{bmatrix}}
\newcommand{\ebm}{\end{bmatrix}}
\newcommand{\bpm}{\begin{pmatrix}}
\newcommand{\epm}{\end{pmatrix}}
\newcommand{\bi}{\begin{itemize}}
\newcommand{\ei}{\end{itemize}}
\newcommand{\ord}{\operatorname{ord}}
\newcommand{\supp}{\operatorname{supp}}   
\newcommand{\an}[1]{\operatorname{an}}  
 \newcommand{\Rat}{\mathrm{Rat}}    
 \newcommand{\rhoH}{\rho_{\mathbf{H}}}    
\newcommand{\simarrow}{\stackrel{\sim}{\rightarrow}}    
\newcommand{\PGL}{\mathrm{PGL}}    
\newcommand{\Berk}{\mathbf{P}^1}  
\newcommand{\red}{\mathrm{red}}
\newcommand{\DD}{\mathbb{D}}
\newcommand{\LL}{\mathbb{L}}
\renewcommand{\SS}{\mathcal{S}}
\newcommand{\EE}{\mathcal{E}}
\newcommand{\Ramtot}{\mathcal{R}^{\mathrm{tot}}}
\newcommand{\CCt}{\mathbb{C}(\!(t)\!)}
\renewcommand{\setminus}{\smallsetminus}
\title{Degenerations of Complex Dynamical Systems}
\author{Laura De Marco}
\address{Department of Mathematics, Statistics, and Computer Science \\
University of Illinois at Chicago \\
Chicago, IL}
\email{demarco@uic.edu}
\author{Xander Faber}
\address{Department of Mathematics\\
University of Hawaii at Manoa \\
Honolulu, HI}
\email{xander@math.hawaii.edu}
\subjclass[2010]{37F10; 37P50 (primary);
 37F45 (secondary)}
\keywords{rational function; dynamics; measure of maximal entropy; Berkovich space}
\begin{document}
\begin{abstract}
We show that the weak limit of the maximal measures for any degenerating sequence of rational maps on the Riemann sphere $\Chat$ must be a countable sum of atoms. For a 1-parameter family $f_t$ of rational maps, we refine this result by showing that the measures of maximal entropy have a \textit{unique} limit on $\hat\C$ as the family degenerates. The family $f_t$ may be viewed as a single rational function on the Berkovich projective line $\Berk_{\LL}$ over the completion of the field of formal Puiseux series in $t$, and the limiting measure on $\hat \C$ is the ``residual measure'' associated to the equilibrium measure on $\Berk_{\LL}$. For the proof, we introduce a new technique for quantizing measures on the Berkovich projective line and demonstrate the uniqueness of solutions to a quantized version of the pullback formula for the equilibrium measure on $\Berk_{\LL}$.  
\end{abstract}

\maketitle


\section{Introduction}

Let $f_k: \hat \C \to \hat \C$ be a sequence of endomorphisms of the Riemann sphere of degree $d\geq 2$ that diverges in the space of all endomorphisms.  Concretely, this means that at least one zero and pole of $f_k$ are colliding in the limit.  Our main goal is to understand the degeneration of the dynamical features of $f_k$ and, ultimately, to extract useful information from a ``limit dynamical system."  In this article, we concentrate on the measure of maximal entropy.  


The existence and uniqueness of a measure of maximal entropy $\mu_f$ for a rational function $f$ of degree $\geq 2$ were shown in 1983 \cite{Lyubich_Measure, Freire-Lopes-Mane_Uniqueness_1983, Mane_Uniqueness_1983}.  Shortly after, Ma\~n\'e observed that the measure $\mu_f$ moves continuously in families \cite{Mane_Weakly_Continuous}, with the weak-$*$ topology of measures and the uniform topology on the space of rational functions.  By contrast, the Julia set $J(f) = \supp \mu_f$ fails to move continuously (in the Hausdorff topology) in the presence of bifurcations \cite{Mane:Sad:Sullivan}. 

	The space $\Rat_d$ of complex rational functions of degree $d \geq 2$ can be identified with the complement of a hypersurface in $\Ratbar_d = \PP^{2d+1}$. In \cite{DeMarco_Boundary_Maps_2005}, the first author showed that for ``most" degenerating sequences $f_k \to \del \Rat_d$, a limit of the maximal measures $\mu_{f_k}$ can be expressed as a countably-infinite sum of atoms.  (The measures $\mu_{f_k}$ themselves are atomless.)  There it was also shown that Ma\~n\'e's continuity property for maximal measures \textit{does not extend} to all of $\Ratbar_d$. Although weak limits of maximal measures for degenerating sequences may not be unique, our first main result shows that every weak limit is purely atomic.
	

\begin{thmA} \label{sequences}
Let $f_k$ be a sequence that diverges in the space $\Rat_d$ of complex rational functions of degree $d\geq 2$, and assume that the measures of maximal entropy $\mu_k$ converge to a probability measure $\mu$ on $\Chat$.  Then $\mu$ is equal to a countable sum of atoms.  
\end{thmA}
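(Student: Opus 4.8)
\section*{Proof proposal}

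The strategy is to extract from the hypothesis $\mu_k\to\mu$ a self-similarity relation satisfied by $\mu$, and then to deduce from it that the atomless part of $\mu$ has total mass $0$. Since $\mu_k\to\mu$ is assumed for the whole sequence, I may pass to a subsequence without loss, and so assume $f_k=[P_k:Q_k]\to[P:Q]$ in $\Ratbar_d=\P^{2d+1}$, with representatives chosen so that $(P_k,Q_k)\to(P,Q)$ coefficientwise. As $f_k$ diverges in $\Rat_d$, the limit lies on the resultant hypersurface, so $H:=\gcd(P,Q)$ has degree $\delta\ge 1$; put $e:=d-\delta$ (so $e<d$), write $\widetilde P=P/H$ and $\widetilde Q=Q/H$, and let $\varphi:=[\widetilde P:\widetilde Q]$, a rational map of degree $e$. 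Let $\mathcal H:=\Zeros(H)\subset\Chat$ be the finite set of ``holes.'' (If $e=0$ then $\varphi$ is constant; one checks directly that every preimage $f_k^{-1}(x)$ escapes into $\mathcal H$, forcing $\mu$ to be supported on the finite set $\mathcal H\cup\{\infty\}$, and we are done. Assume henceforth $e\ge 1$, so that $\varphi^{-1}(\mathcal H)$ is finite.) Finally, since any finite measure has at most countably many atoms, after a generic M\"obius conjugation I may assume that neither $\infty$ nor $\varphi(\infty)$ is an atom of $\mu$.

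The analytic heart is a description of preimages in the limit. As $f_k^{-1}(x)$ is the zero set in $z$ of $P_k(z,1)-x\,Q_k(z,1)$, whose coefficients converge to those of $H(z,1)\bigl(\widetilde P(z,1)-x\,\widetilde Q(z,1)\bigr)$, continuity of roots shows that the multiset $f_k^{-1}(x)$ tends to $\varphi^{-1}(x)\sqcup\mathcal H$: exactly $e$ of the preimages converge to the points of $\varphi^{-1}(x)$, and the remaining $\delta$ converge into every neighbourhood of $\mathcal H$, uniformly for $x$ in compact subsets of $\Chat\setminus\{\varphi(\infty)\}$. Combining this with the balanced identity $\mu_k=\tfrac1d\,f_k^{\,*}\mu_k$ --- a basic property of the measure of maximal entropy --- which gives $\mu_k(B)=\tfrac1d\int\#\bigl(f_k^{-1}(x)\cap B\bigr)\,d\mu_k(x)$ for a Borel set $B$, I would proceed as follows. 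Fix $p$ in the cofinite set $U:=\Chat\setminus\bigl(\mathcal H\cup\varphi^{-1}(\mathcal H)\cup\{\infty,\varphi(\infty)\}\bigr)$ and let $B$ be a small ball about $p$ of generic radius. Split the integral over $x$ into three ranges: for $x$ near $\mathcal H$, all preimages of $x$ cluster near $\mathcal H\cup\varphi^{-1}(\mathcal H)$ and so miss $B$, contributing $0$; for $x$ near $\{\infty,\varphi(\infty)\}$ the contribution is at most $\mu_k$ of a shrinking neighbourhood; and on the complementary compact set $\#\bigl(f_k^{-1}(x)\cap B\bigr)$ is squeezed between $\#\bigl(\varphi^{-1}(x)\cap B^{-}\bigr)$ and $\#\bigl(\varphi^{-1}(x)\cap\overline{B^{+}}\bigr)$ for slightly eroded and dilated concentric balls. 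Letting $k\to\infty$ --- using $\mu_k\to\mu$, the weak continuity of $\varphi^{*}$ on measures, and the semicontinuity of $x\mapsto\#\bigl(\varphi^{-1}(x)\cap(\text{a closed, resp.\ open, set})\bigr)$ --- and then shrinking the auxiliary radii, I obtain $\mu(B)=\tfrac1d(\varphi^{*}\mu)(B)$ for all such $B$, hence $\mu|_{U}=\tfrac1d(\varphi^{*}\mu)|_{U}$.

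To finish, decompose $\mu=\mu_{\mathrm{at}}+\mu_{\mathrm c}$ into its atomic and atomless parts. The operator $\varphi^{*}$ sends atomic measures to atomic measures and atomless measures to atomless measures, and $\Chat\setminus U$ is finite, hence null for every atomless measure; so the relation just obtained forces $\mu_{\mathrm c}=\tfrac1d\,\varphi^{*}\mu_{\mathrm c}$ on all of $\Chat$. Evaluating both sides on $\Chat$ gives $\mu_{\mathrm c}(\Chat)=\tfrac{e}{d}\,\mu_{\mathrm c}(\Chat)$, and since $e<d$ this forces $\mu_{\mathrm c}\equiv 0$. Thus $\mu=\mu_{\mathrm{at}}$ is a countable sum of atoms, as claimed.

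The main obstacle is the limiting step: one needs the estimate on $\#\bigl(f_k^{-1}(x)\cap B\bigr)$ to be uniform in $x$ at exactly the moment when $\mu_k$ is concentrating onto $\mathcal H$, which calls for care near the critical values of $\varphi$ and near $\varphi(\infty)$ (where the leading coefficient of $P_k-xQ_k$ degenerates), as well as the routine device of choosing ball radii for which $\mu$ and $\varphi^{*}\mu$ charge no boundaries, so that weak convergence can be applied term by term. Once $\mu|_U=\tfrac1d\varphi^{*}\mu$ is in hand, the extraction of atomicity is purely formal.
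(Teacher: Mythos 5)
Your argument handles the case $e\ge 1$ (nonconstant reduction $\varphi$) cleanly, and in fact more efficiently than the paper does: the self-similarity $\mu|_U=\tfrac{1}{d}(\varphi^*\mu)|_U$ together with the atomic/atomless decomposition yields atomicity from a \emph{single} application of the pullback relation, whereas the paper invokes all iterates $f_k^n$ and tracks the atom mass via the surplus multiplicities. But your treatment of the case $e=0$ (constant reduction) does not work, and this is not a corner case --- it is precisely the situation for which the paper's apparatus (M\"obius renormalization via Lemma~\ref{nontrivial limit}, paired measures, iteration, properness of $f\mapsto f^n$) is built. The assertion that ``every preimage $f_k^{-1}(x)$ escapes into $\mathcal{H}$'' is true only for $x$ bounded away from the constant value $c=\varphi$; for $x\to c$ at a rate tied to $k$, the $d$ points of $f_k^{-1}(x)$ can land anywhere, because the limiting binary form $P-cQ$ is identically zero. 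Running the balanced identity through in this case only gives $\mu(B)\le\mu(\{c\})$ for small open $B$ disjoint from $\mathcal{H}\cup\{c\}$, which does not force $\mu$ to be supported on $\mathcal{H}\cup\{\infty\}$.

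The deeper obstruction is this: to resolve the behaviour near $c$, one must postcompose with a degenerating sequence $A_k\in\Rat_1$ so that $A_k\circ f_k$ has nonconstant reduction $\phi$, and then two weak limits $\mu=\lim\mu_k$ and $\mu_E=\lim A_{k*}\mu_k$ must be tracked simultaneously. Repeating your total-mass computation on atomless parts in that setting gives only $\mu_{\mathrm c}(\Chat)=\tfrac{\deg\phi}{d}\,(\mu_E)_{\mathrm c}(\Chat)$; since $\mu_E$ is a different measure from $\mu$, this is no longer a self-consistency relation and yields nothing --- even when $\deg\phi<d$. The paper escapes this by passing to iterates and relating the reductions $\phi_n$ of $f_k^n$ through the composition sequence of Lemma~\ref{composition}; and when $\deg\phi_n\ne o(d^n)$, a genuinely different argument is required (properness of the iteration map $\Rat_d\to\Rat_{d^{m-n}}$, and a packing argument exploiting the divergence of $A_{m,k}\circ A_{n,k}^{-1}$). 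Your proposal dismisses this entire branch of the proof with ``one checks directly,'' but no direct check is available; this is a real gap.
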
 

Our second main result shows that Ma\~n\'e's continuity property \textit{does extend} to degenerating 1-parameter families.  Moreover, we are able to give a refined description of the limit measure using an associated dynamical system on the Berkovich projective line. 
	
\begin{thmB}  \label{main theorem}
Let $\{f_t: \; t\in \D\}$ be a meromorphic family of rational functions of degree $d \geq 2$ that is degenerate at $t=0$.  The measures of maximal entropy $\mu_t$ converge weakly on the Riemann sphere to a limiting probability measure $\mu_0$ as $t\to 0$.   The measure $\mu_0$ is equal to the residual equilibrium measure for the induced rational map $f: \Berk_{\LL} \to \Berk_\LL$ on the Berkovich projective line, where $\LL$ is the completion of the field of formal Puiseux series in~$t$.  
\end{thmB}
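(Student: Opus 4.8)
\emph{Plan and setup.} The plan is to realize the degenerating family as a single rational function $f$ over $\LL$, to build a candidate limit on $\hat\C$ out of the equilibrium measure $\mu_f$ of $f$ on $\Berk_\LL$ via the residual-measure construction, to show that every subsequential weak limit of the $\mu_t$ obeys the same ``quantized'' functional equation as that candidate, and then to prove that this equation has only one solution. For the setup: the (homogeneous) coefficients of the meromorphic family extend holomorphically over $t=0$, hence lie in $\LL$, so $f_t$ induces a rational map $f\in\LL(z)$ of degree $d$ on $\Berk_\LL$; the hypothesis that the family is degenerate at $t=0$ is exactly that $f$ has bad reduction at the Gauss point of $\Berk_\LL$ in the given coordinate, which is what makes $\mu_f$ (and hence $\mathrm{res}(\mu_f)$) nontrivial. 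Since $\hat\C$ is compact the probability measures $\{\mu_t\}$ are weakly precompact, so it suffices to prove that \emph{every} subsequential weak limit $\mu_0$ of $\mu_t$ as $t\to0$ equals $\mathrm{res}(\mu_f)$; by Theorem~A each such $\mu_0$ is a countable sum of atoms.

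\emph{The rescaling bridge.} For a point $a(t)$ given by an algebraic branch (or a constant), the escape-rate functions of a homogeneous lift $F_t$ satisfy $G_{F_t}(a(t))/\log(1/|t|)\to g_f(\zeta_a)$, where $g_f$ is the dynamical Green's function on $\Berk_\LL$; this recovers $\mu_f$ to leading order but not the limiting atoms, for which one must rescale. A type~II point $\zeta$ relevant to $\mu_f$ corresponds to a ball $D(a(t),|t|^{\lambda_\zeta})$ with $\lambda_\zeta\neq0$, and in the coordinate $z=a(t)+|t|^{\lambda_\zeta}w$ the family $f_t$, after suitable pre- and post-composition, converges as $t\to0$ to the reduction $\tilde f_\zeta$ of $f$ at $\zeta$ --- an honest rational map over $\C$ of some degree $1\le d_\zeta\le d$. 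Hence $\mu_t$, viewed through the window of radius $|t|^{\lambda_\zeta}$ about $a(t)$, converges to the share of mass carried by $\tilde f_\zeta$; and because each such ball collapses to a single point $p_\zeta\in\hat\C$ as $t\to0$, this share accumulates as an atom of $\mu_0$ at $p_\zeta$. Carrying this out over all relevant $\zeta$ --- with a recursive truncation when $\supp\mu_f$ is infinite --- and recording how $f$ moves these type~II points among themselves organizes $\mu_0$ as a weighted sum of atoms whose weights $m_\zeta$ obey $m_{f(\zeta)}\,d_\zeta=d\,m_\zeta$.

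\emph{The quantized pullback equation.} Applying $f_t^*\mu_t=d\,\mu_t$ near each $a(t)$ in the rescaled coordinate and passing to the limit, the rescaled pullback relations survive and express the pieces of $\mu_0$ at the points $p_\zeta$ in terms of one another through the reductions $\tilde f_\zeta$ and the dynamics of $f$ on the relevant subtree; this is a linear system, the \emph{quantized pullback equation}. On the Berkovich side the residual-measure operation intertwines $f^*$ with this same quantized pullback, so $\mathrm{res}(\mu_f)$ satisfies the system because $f^*\mu_f=d\,\mu_f$. Thus $\mu_0$ and $\mathrm{res}(\mu_f)$ are two probability-measure solutions of a single quantized pullback equation, and the theorem reduces to the uniqueness of such solutions.

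\emph{Uniqueness --- the main obstacle --- and conclusion.} Showing the quantized pullback equation has a unique probability-measure solution is the heart of the argument. The difficulty is real: some reductions $\tilde f_\zeta$ may have degree strictly below $d$ (the degeneration is precisely a drop of degree under reduction at the Gauss point), so one cannot just invoke uniqueness of the measure of maximal entropy for each $\tilde f_\zeta$ separately and glue them, and the naive estimate $\tfrac1{d^n}(f_t^n)^*\delta_y\to\mu_t$ is not uniform in $t$ near $0$. The plan is a contraction/fixed-point argument on the finite-dimensional space of admissible weighted atomic measures attached to the relevant subtree: the composed reductions around cycles of type~II points in $\supp\mu_f$ are honest rational maps of degree $\ge2$, so their measures of maximal entropy are unique and determine the solution there; preperiodic branches are then forced by pullback; and the remaining low-degree and ``exterior'' contributions are pinned down by a global argument that the quantized pullback operator has a single fixed point --- the analogue, one level down, of the fact that $\mu_f$ is the unique $f$-balanced probability measure on $\Berk_\LL$. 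Granting uniqueness, every subsequential limit of $\mu_t$ equals $\mathrm{res}(\mu_f)$, hence $\mu_t\to\mu_0:=\mathrm{res}(\mu_f)$ weakly on $\hat\C$ as $t\to0$, and $\mu_0$ is by construction the residual equilibrium measure of the induced map $f$, as claimed.
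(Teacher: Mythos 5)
The overall architecture you propose (view the family as a rational map $f$ over $\LL$, show that subsequential limits of $\mu_t$ satisfy a quantized pullback equation that $\mathrm{res}(\mu_f)$ also satisfies, then prove uniqueness) is the same as the paper's. But the uniqueness step --- which you correctly flag as the heart of the matter --- is where your plan has a genuine gap, and as stated it would fail.

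First, the ``contraction/fixed-point argument on the finite-dimensional space of admissible weighted atomic measures attached to the relevant subtree'' rests on structure that need not exist. The subtree carrying $\mu_f$ is typically infinite, so the space is not finite-dimensional, and there may be no periodic type~II points in it at all: the forward orbit of the Gauss point $\zeta$ can escape to infinity in the hyperbolic metric. Even when there are periodic type~II points, the ``composed reductions around cycles'' can be degree~1 M\"obius maps (only a single vertex in the cycle is guaranteed to have reduction of degree $\geq 2$ if $\zeta$ is Julia), so invoking uniqueness of the measure of maximal entropy for those reductions is not available. The paper instead proves uniqueness using a \emph{global} quantitative input --- the surplus equidistribution statement $s_{f^n}(U) = \mu_f(U)\,d^n + o(d^n)$ --- combined with a surplus estimate $\nu(U) \geq s_{f^n}(U)/d^n$ valid for any solution of the $n$-th iterated pullback equation. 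Letting $n\to\infty$ forces $\nu \geq \mu_f$ on all $\Gamma$-disks, and since both are probability measures not charging $\zeta$, equality follows. This is precisely the kind of ``global argument'' you gesture at but do not supply, and it is genuinely different from a contraction-mapping argument.

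Second, and more seriously, uniqueness on the Berkovich side is actually \emph{false} in one case: when the forward orbit of the Gauss point converges along the locus of total ramification to a classical exceptional cycle, the set of solutions to all iterated pullback equations is a segment spanned by the equilibrium $\Gamma$-measure and an exceptional point-mass $\delta_\EE$. Your plan, if it worked, would prove uniqueness unconditionally and thereby prove too much. The paper handles this by returning to the complex picture: the exceptional cycle gives a meromorphic family $p_t$ of superattracting periodic points, a neighborhood of $p_0$ is uniformly contained in the Fatou set of $f_t$ for small $t$, hence $\mu_t$ gives no mass there, and by weak continuity the limit puts no mass at $p_0$ --- ruling out the $\delta_\EE$ component. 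Without something playing this role your argument cannot close, because the quantized pullback equation alone does not single out $\mathrm{res}(\mu_f)$ in the exceptional case.

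A smaller point: the relation you write for atom weights, $m_{f(\zeta)}\,d_\zeta = d\,m_\zeta$, omits the surplus-multiplicity contribution. The correct quantized pullback at a $\Gamma$-disk $U$ is $d\,\nu(U) = s_f(U) + m_f(U)\,\nu(\bar f(U))$, and the surplus term $s_f(U)$ is exactly what carries the mass that drops from the reduction map; neglecting it makes the linear system both wrong and underdetermined.
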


\begin{remark} 
The continuity of maximal measures on $\Chat$ can fail for degenerating families over a parameter space of dimension 2; see \cite[\S5]{DeMarco_Boundary_Maps_2005}.  
\end{remark}

\begin{remark}
	While we prefer to work with the more ``geometric'' field $\LL$, one can replace it with the field of formal Laurent series $\CCt$ in the statement of the theorem. 
\end{remark}
	
One should view the Berkovich dynamical system $(f, \Berk_{\LL})$ as the limit of dynamical systems $(f_t, \hat \C)$ as $t\to 0$.  This fruitful perspective was introduced by Jan Kiwi in his work on cubic polynomials and quadratic rational maps; see \cite{Kiwi_Puiseux_Dynamics_2006, Kiwi_Rescaling_Limits_2012} and \cite{Bonifant-Kiwi-Milnor}.  A closely related construction, viewing degenerations of polynomial maps as actions on trees, can be seen in \cite{DeMarco-McMullen_Trees}.  Charles Favre has recently constructed a compactification of the space of rational maps, where the boundary points are rational maps on a Berkovich $\Berk$ \cite{Favre:personalcomm}.  Our work is very much inspired by these results.  The Berkovich space viewpoint allows us to recover the results in \cite{DeMarco_Boundary_Maps_2005}, and it provides a conceptual explanation for the form of the limiting measures. In a sequel to this article, we will describe a countable--state Markov process that allows one to compute the residual measure explicitly.  

As with non-degenerating families, the Julia sets of $f_t$ may fail to converge to a limit as $t\to 0$.  Consider the example of $f_t(z) = t(z + z^{-1})$ in $\Rat_2$.  As $t\to 0$ along the real axis, the Julia set of $f_t$ is equal to the imaginary axis, while there is a sequence $t_n\to 0$ (tangent to the imaginary axis) for which $J(f_{t_n}) = \Chat$.  Ma\~n\'e used the continuity of $f\mapsto \mu_f$ to deduce that the Hausdorff dimension of $\mu_f$ is a continuous function of $f$, but this property does not extend to degenerating families; for example, the measures for a flexible Latt\`es family have dimension 2 while the limit measures always have dimension~0.  
	
The measure of maximal entropy $\mu_f$ for a rational function $f$ of degree $d\geq 2$ is characterized by the conditions that (a) it does not charge exceptional points, and (b) it satisfies the pullback relation 
	$$\frac{1}{d} \, f^*\mu_f = \mu_f.$$ 
To prove Theorem~A, we show that any weak limit of measures of maximal entropy on $\Chat$ must satisfy an appropriately-defined pullback formula (Theorem~\ref{paired pullback formula}); we then show that any measure satisfying this formula (for all iterates) is atomic.  The pullback formula is phrased in terms of ``paired measures,'' which is an ad hoc object we introduce to keep track of weak limits of measures in two sets of coordinates simultaneously. This is all accomplished in Section~\ref{Sec: Complex Surface}. 
	
The proof of Theorem~B (which inspired our proof of Theorem~A) is more conceptual and can be divided into three parts, each with its own collection of results that are of independent interest.  We sketch these results here.

\medskip\noindent
{\bf Step 1.  Dynamics on a complex surface.} 
 In Section~\ref{surface def}, we view the holomorphic family $f_t: \P^1 \to \P^1$ as one (meromorphic) dynamical system 
	$$F: X \dashrightarrow X$$
on the complex surface $X =  \D\times\P^1$, given by $(t,z) \mapsto (t, f_t(z))$ for $t\not=0$.  By hypothesis, $F$ will have points of indeterminacy in the central fiber $X_0 = \{0\}\times\P^1$.  If $F$ collapses $X_0$ to a point, we let $\pi: Y \to X$ be the (minimal) blow-up of the target surface so that $F: X \dashrightarrow Y$ is nonconstant at $t=0$; otherwise, set $Y =  X$ and $\pi = \mathrm{Id}$.   By counting multiplicities at the indeterminacy points of $F$, we define a notion of pullback $F^*$ from measures on the central fiber of $Y$ to measures on $X_0$.   We prove (Theorem~\ref{complex pullback}) that any weak limit $\nu$ of the measures $\mu_t$ on the central fiber of $Y$ satisfies a pullback relation: 
\begin{equation}  \label{eq 1}
	\frac{1}{d} F^* \nu = \pi_* \nu.
\end{equation}
The proof relies on the Argument Principle to handle the measure at the points of indeterminacy for $F$. 

\medskip\noindent
{\bf Step 2.  Dynamics and $\Gamma$-measures on the Berkovich projective line.} 
Let $k$ be an algebraically closed field of characteristic zero that is complete with respect to a nontrivial non-Archimedean absolute value. The Berkovich analytification of the projective line $\PP^1_k$ will be denoted $\Berk$; it is a compact, Hausdorff, and uniquely arcwise connected topological space. A rational function $f: \PP^1_k \to \PP^1_k$ extends functorially to $\Berk$. If $d = \deg(f) \geq 2$, then the equilibrium measure $\mu_f$ may be characterized similarly to the complex case by the conditions that (a) it does not charge exceptional points of $\PP^1(k)$, and (b) it satisfies the pullback relation $\frac{1}{d} f^*\mu_f = \mu_f$ \cite{Favre_Rivera-Letelier_Ergodic_2010}. See \cite{Baker-Rumely_BerkBook_2010} for a reference specific to dynamics on $\Berk$, or see \cite{Berkovich_Spectral_Theory_1990} for the more general theory of non-Archimedean analytic spaces.  

	The goal of Section~\ref{Sec: Measures} is to define a notion of pullback $f^*$ on a new space of quantized measures relative to a finite set $\Gamma$ of vertices in $\Berk$. Every Borel probability measure $\nu$ on $\Berk$ gives rise to one of these ``$\Gamma$-measures'' $\nu_\Gamma$. And if $\nu$ is a solution to the standard pullback formula $\frac{1}{d}f^* \nu =  \nu$, then $\nu_\Gamma$ will satisfy a quantized version:
	\be
	\label{Eq: New Berkovich pullback}
			\frac{1}{d}f^* \nu_\Gamma =  \pi_* \nu_\Gamma.
	\ee
(One must push $\nu_\Gamma$ forward by a certain map $\pi$ in order to have a meaningful equation since $f^*\nu_\Gamma$ lies in the space of $\Gamma'$-measures for a potentially different vertex set $\Gamma'$.)  A solution to the pullback formula (\ref{Eq: New Berkovich pullback}) is typically far from unique. However, we will show (Theorem~\ref{Thm: One-vertex Unique}) that uniqueness is restored if one considers simultaneous solutions to pullback equations for all iterates of $f$, after ruling out measures supported on classical exceptional cycles.

\medskip\noindent
{\bf Step 3.  A transfer principle.}
Now, let $k=\LL$ be the completion of the field of formal Puiseux series in~$t$, equipped with the non-archimedean absolute value that measures the order of vanishing at $t=0$. (See \cite[\S3]{Kiwi_Rescaling_Limits_2012}.) By viewing the parameter $t$ as an element of $\LL$, the family $f_t$ defines a single rational function $f$ with coefficients in $\LL$.  We define a vertex set $\Gamma\subset\Berk$ consisting of one vertex only, the Gauss point.  In \S\ref{residual measures}, we define a correspondence between measures on the central fiber of our surface $X$ with $\Gamma$-measures on $\Berk$.   From Step 1, any weak limit $\nu$ of the measures $\mu_t$ will satisfy the pullback relation \eqref{eq 1}.  The corresponding $\Gamma$-measure $\nu_\Gamma$ must satisfy the non-Archimedean pullback relation \eqref{Eq: New Berkovich pullback} on $\Berk$, by Proposition \ref{transfer}.   
We repeat the argument for all iterates $f_t^n$.  From Step 2, we deduce that $\nu_\Gamma$ is the equilibrium $\Gamma$-measure, and consequently, the limit measure $\nu$ is the ``residual" equilibrium measure.    See Section~\ref{Sec: Transfer}.
\medskip

\bigskip
\noindent
{\bf Acknowledgements.}  We are grateful for the opportunities that allowed these ideas to germinate: the 2010 Bellairs Workshop in Number Theory funded by the CRM in Montreal, and the Spring 2012 semester on Complex and Arithmetic Dynamics at ICERM. We would like to thank Charles Favre, Mattias Jonsson, Jan Kiwi, and Juan Rivera-Letelier for helpful discussions, and we further thank Jonsson for inviting us to speak about this work at the December 2012 RTG workshop at the University of Michigan. 

	
\bigskip
\section{The space of rational maps: complex-analytic arguments}
\label{Sec: Complex Surface}

In this section we prove Theorem~A along with a number of preliminary results that will be used in the first step of the proof of Theorem~B.

\subsection{The space of rational maps}  \label{space}
We will let $\Rat_d$ denote the set of all complex rational functions of degree $d$.  It can be viewed as an open subset of the complex projective space $\P^{2d+1}$, by identifying a function 
	$$f(z) = \frac{a_0z^d + a_1z^{d-1} + \cdots + a_d}{b_0z^d + b_1z^{d-1} + \cdots + b_d}$$ 
with its coefficients in homogeneous coordinates
	$$(a_0:a_1:\cdots:a_d:b_0:b_1:\cdots:b_d) \in \P^{2d+1}.$$
In fact, any point $\Phi \in \P^{2d+1}$ determines a pair $(P, Q)$ of homogeneous polynomials in two variables, and $\Rat_d = \P^{2d+1} \setminus \{\mathrm{Res}(P, Q) = 0\}$.  We set $\Ratbar_d = \P^{2d+1}$ so that $\del \Rat_d = \{\mathrm{Res}=0\}$.  For each $\Phi = (P,Q) \in \del\Rat_d$, we let $H = \gcd(P,Q)$, and let $\phi$ be the induced rational function of degree $< d$ defined by the ratio $P/Q$.   To match the algebraic language of the later sections, we refer to the map $\phi$ as the \textbf{reduction} of $\Phi$.  

A 1-parameter \textbf{holomorphic family} $\{f_t: \, t\in U\}$ is a holomorphic map from a domain $U\subset \C$ to $\Rat_d$.  A \textbf{meromorphic family} is a holomorphic map from $U$ to $\Ratbar_d$ with image not contained in $\del \Rat_d$.  A meromorphic family is \textbf{degenerate} at $u\in U$ if the image of $u$ lies in $\del \Rat_d$.

\begin{lem}  \label{nontrivial limit}
Let $f_k$ be a sequence in $\Rat_d$ converging to a point $\Phi \in \del \Rat_d$.  After passing to a subsequence if necessary, there is a sequence of M\"obius transformations $A_k \in \Rat_1$ so that $A_k \circ f_k$ converges in $\overline{\Rat}_d$ to a point with nonconstant reduction.  If $B_k$ is any other such sequence in $\Rat_1$, then $M_k = A_k \circ B_k^{-1}$ converges in $\Rat_1$ as $k\to\infty$ (along the subsequence determined by $A_k$).  If the $f_k$ lie in a meromorphic family $\{f_t: t \in \D\}$, then the sequence $A_k$ may be chosen to lie in a meromorphic family $\{A_t: t\in \D\}$. 
\end{lem}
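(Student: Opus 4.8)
The plan is to normalize the sequence $f_k$ so that, after suitable pre-composition with Möbius transformations, the limit in $\Ratbar_d$ does not collapse to a constant. Write $f_k = (P_k : Q_k)$ in homogeneous coordinates, with each pair of polynomials scaled so that the maximum modulus of the coefficients is $1$; after passing to a subsequence, $(P_k, Q_k) \to (P, Q)$ for some limiting pair with $\max$-norm $1$, and since $f_k \to \del\Rat_d$ we have $\mathrm{Res}(P,Q)=0$, so $H = \gcd(P,Q)$ is nonconstant. The reduction $\phi = P/Q$ has degree $d - \deg H < d$. The first task is to choose $A_k \in \Rat_1$ so that $A_k \circ f_k$ converges to a pair whose gcd has strictly smaller degree; iterating at most $\deg H$ times (and passing to further subsequences each time) produces a normalization whose limit has nonconstant reduction, and one composes all the intermediate Möbius maps to get a single sequence $A_k$. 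The key point for one step is this: the image $\phi(\P^1)$ is a finite subset of $\P^1$ exactly when $\phi$ is constant; otherwise $\phi$ is a nonconstant rational map and already has nonconstant reduction, so the only obstruction is that $\phi$ is constant, say $\phi \equiv c$. In that case $Q(z) P(w) - P(z) Q(w)$... more simply: $P - cQ$ is divisible by $H$, and after a Möbius change of coordinate $A$ in the target sending $c$ to $\infty$ (or to $0$), one checks that $A \circ f_k$, rescaled to max-norm $1$, converges to a pair whose gcd is $H$ times the old gcd only if one is unlucky — so instead one must rescale $A$ together with $k$. Concretely, choose $A_k(w) = (w - \phi_k(z_0))/\lambda_k$ for a well-chosen basepoint $z_0$ and scaling $\lambda_k \to 0$ measuring the rate at which $f_k(z_0)$ approaches $c$; this is the standard trick (cf. \cite{DeMarco_Boundary_Maps_2005}) that "rescues" a rescaling limit, and it strictly increases the degree of the reduction. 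The main obstacle — and the part requiring genuine care — is making this rescaling construction canonical enough that it can be iterated finitely often and carried out in a holomorphic family; controlling the rates $\lambda_k$ and the basepoints so the process terminates is where the real work lies.

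For the uniqueness assertion, suppose $A_k \circ f_k \to \Psi$ and $B_k \circ f_k \to \Psi'$, both with nonconstant reduction. Set $M_k = A_k \circ B_k^{-1} \in \Rat_1$. Writing $M_k$ as a $2\times2$ matrix normalized to have, say, norm $1$, pass to a subsequence so that $M_k \to M_\infty$ in the space of $2\times 2$ matrices up to scale; a priori $M_\infty$ could be singular, i.e. $M_\infty \in \Ratbar_1 \setminus \Rat_1$. If $\det M_\infty = 0$, then $M_\infty$ has rank $1$, hence collapses $\P^1$ to a point, so $M_k \circ (B_k \circ f_k)$ would converge (on the locus where $B_k\circ f_k$ stays away from the point blown up by $M_\infty$) to a constant — but this composite is $A_k \circ f_k$, which converges to $\Psi$ with \emph{nonconstant} reduction, a contradiction once one checks the reductions are compatible: $\Psi = M_\infty \circ \Psi'$ in the appropriate sense, and $M_\infty$ nonconstant-on-the-nose would be needed to preserve nonconstancy of the reduction. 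Hence $\det M_\infty \neq 0$, i.e. $M_\infty \in \Rat_1$, giving convergence in $\Rat_1$ as claimed. The slightly delicate point is that $\Psi'$ might have image contained in the "bad set" of $M_\infty$; this is ruled out because $\Psi'$ has nonconstant reduction, so its image is a full curve, not contained in any single point.

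Finally, for the holomorphic family statement: each step of the normalization above chooses $A_k$ out of data (the limiting pair $(P_k,Q_k)$ after rescaling, the basepoint, the rate $\lambda_k$) that vary holomorphically, or at least meromorphically, in $t$. Concretely, the rescaling factor $\lambda_k$ can be taken to be $t^{m}$ for an integer (or rational, after a ramified base change $t \mapsto t^{1/N}$) exponent $m$ recording the order of vanishing of the relevant resultant or coefficient, and the translation part is a holomorphic function of $t$; thus each normalizing Möbius map is the germ at $t=0$ of a meromorphic family $A_t \in \Ratbar_1$. Composing finitely many such and passing to a common ramified cover $t \mapsto t^{1/N}$ if needed, we obtain the desired meromorphic family $\{A_t : t \in \D\}$ — possibly after shrinking $\D$ — with the property that $A_t \circ f_t$ extends to a meromorphic family over $\D$ whose value at $0$ has nonconstant reduction. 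I expect the termination of the iterated rescaling, with uniform control so that the same subsequence works at every stage, to be the crux of the argument.
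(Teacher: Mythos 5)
Your blueprint matches the paper's broad strokes --- normalize the homogeneous coefficients, post-compose with a translation and a dilation to obtain nonconstant reduction, use nonconstancy of both reductions to force $M_k$ to land in $\Rat_1$, and observe that the normalizing maps are built from data varying meromorphically in $t$ --- but the existence argument has a genuine gap, and there are two smaller issues.

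The gap is termination. You propose an induction: choose $A_k$ so that the gcd of the limiting pair drops in degree, and iterate ``at most $\deg H$ times,'' while acknowledging that you have not justified this progress claim. This is exactly where your proposal fails. There is no a priori monotone quantity here: when $\phi$ is constant, $\deg H = d$, and a poor choice of rescaling rate $\lambda_k$ produces a new limit whose reduction is \emph{again} constant, so $\deg H = d$ once more and no progress is made. For instance, with $f_k(z) = \epsilon_k^2/(z^2 + \epsilon_k z)$, post-composing with $w \mapsto w/\epsilon_k$ yields a limit that is still the constant $0$, whereas $w \mapsto w/\epsilon_k^2$ succeeds. The paper's proof is not an open-ended induction but a bounded-step normalization (following Rivera-Letelier and Kiwi): (i) rescale the target so both limiting polynomials $P$ and $Q$ are nonzero; (ii) if $P = c_0 Q$, translate by $a_k/b_k$ so the new numerator has no $z^m w^{d-m}$ term, where $m = \deg_z P = \deg_z Q$; (iii) rescale once more to restore nonvanishing. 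After step (iii) the numerator lacks the $z^m w^{d-m}$ monomial while the denominator $Q$ retains it, so the two cannot be proportional and the reduction is nonconstant. This explicit three-step structure --- with the crucial bookkeeping on the $z^m w^{d-m}$ coefficient --- is what guarantees termination. You need to either supply a proof of the degree-drop you assert or replace your iteration with an argument of this shape.

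Two smaller points. First, in the uniqueness step you pass to a \emph{further} subsequence to get $M_k \to M_\infty$, but the lemma asserts convergence of the full sequence $M_k$ (along the subsequence already fixed by $A_k$). To close this, note that away from finitely many points the identity $\phi_A = M_\infty \circ \phi_B$ determines $M_\infty$ uniquely from $\phi_A$ and $\phi_B$; hence every subsequential limit of $M_k$ coincides, and the full sequence converges. Second, the ramified base change $t \mapsto t^{1/N}$ you invoke in the last paragraph is both unnecessary and would actually violate the statement, which asks for a meromorphic family $A_t$ over the original disk $\D$. The dilations and translations in the bounded-step normalization are built directly from the coefficients of $f_t$: the relevant quantities (ratios of leading coefficients, integer orders of vanishing) are already meromorphic in $t$, so no base change is needed.
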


\proof  
Existence is carried out, algorithmically, in \cite[Prop.~2.4]{Rivera-Letelier_Asterisque_2003} and appears also in \cite[Lemma 3.7]{Kiwi_Rescaling_Limits_2012} when the sequence lies in a holomorphic family; the strategy is as follows.  

At each step of this argument, we may pass to a  subsequence. Write 
	$$f_k(z,w) =   (P_k(z,w):  Q_k(z,w)),$$
normalized so that $(P_k, Q_k) \to (P,Q)$ in $\Ratbar_d$.  Note that at least one of $P$ and $Q$ is nonzero.  By replacing $f_k$ with $S_k \circ f_k$, where $S_k(z) = \alpha_k z$ with $\alpha_k>0$, it can be arranged that the limiting $P$ and $Q$ are both nonzero.  If $P$ is not a scalar multiple of $Q$, we are done.

Suppose $P = c_0 Q$ for some constant $c_0 \in \C^*$.  If $m = \deg_z P = \deg_z Q$, write 
	$$f_k(z,w) = (a_k z^mw^{d-m} + \hat{P}_k(z,w):  b_k z^mw^{d-m} + \hat{Q}_k(z,w))$$
where $\hat{P}_k$ and $\hat{Q}_k$ have no term involving $z^mw^{d-m}$.  Now, postcompose $f_k$ with a translation by $a_k/b_k = c_0 + o(1)$, replacing $f_k$ with
	$$f_k(z,w) = (P_k(z,w) - a_kb_k^{-1}Q_k(z,w):  Q_k(z,w)).$$
If $P$ and $Q$ are not monomials, then we are done; the new limit has nonconstant reduction.  If $P$ and $Q$ were monomials, the resulting limit in $\Ratbar_d$ will have constant reduction ($= 0$); we rescale and repeat the initial argument.  It follows that the new $P$ cannot be a scalar multiple of $Q$ because it has no term involving $z^mw^{d-m}$.  This completes the proof of existence of $\{A_k\}$.

If the given $f_k$  lies in a meromorphic family $f_t = (P_t, Q_t)$, then the scaling and translation maps can be chosen meromorphic in $t$, since they are built from the coefficients of $f_t$.  

Now suppose $A_k\circ f_k \to \Phi_A$ and $B_k \circ f_k \to \Phi_B$ in $\overline{\Rat}_d$, with nonconstant reductions $\phi_A$ and $\phi_B$. Set $M_k = A_k \circ B_k^{-1}$.  Again passing to a subsequence, $M_k$ converges to $M_0 \in \Ratbar_1$.  Away from finitely many points in $\PP^1$, we have
	\[
		\phi_A(p) = \lim_{k \to \infty} A_k \circ f_k(p) = \lim_{k\to\infty} M_k \circ B_k \circ f_k (p) = M_0 \circ  \phi_B(p).
	\]
As $\phi_A$ is nonconstant, so is $M_0$, and therefore $M_0 \in \Rat_1$.  This also shows that $M_0$ is uniquely determined, so the full sequence $M_k$ converges.  
\qed

\subsection{Counting pre-images}
\label{Sec: Counting pre-images}
Fix a sequence $f_k$ in $\Rat_d$, and assume that $f_k$ converges to a degenerate point $\Phi\in \del\Rat_d$ with gcd $H$ and {\em nonconstant} reduction $\phi$.  For each point $x\in \P^1$, we define multiplicities
\begin{equation}  \label{complex m and s}
	m(x) = \deg_x \phi \qquad \text{and} \qquad  s(x) = \ord_x H. 
\end{equation}
The quantity $m(x)$ is the local degree of $\phi$, and the quantity $s(x)$ will be called the {\bf surplus multiplicity} at $x$.

Let $\eta$ be a small loop around $\phi(x)$ bounding a disk $D$, and let $\gamma_x$ be the small loop around $x$ sent with degree $m(x)$ onto $\eta$ by $\phi$.   Choose $\gamma_x$ small enough so that it does not contain any roots of $H$, except possibly $x$ itself.  Because $f_k$ converges locally uniformly to $\phi$ on $\P^1\setminus \{H=0\}$, for each $k \gg 0$ there is a small loop $\gamma_k$ around $x$ that is mapped by $f_k$ with degree $m(x)$ onto $\eta$.  Let $U_k$ be the domain bounded by $\gamma_k$.

\begin{prop} \label{counting}
Assume that $f_k$ converges to $\Phi\in \del\Rat_d$ with nonconstant reduction.  Fix $x\in \PP^1$.  For all $k$ sufficiently large, 
	$$ \# ( f_k^{-1} (z_0) \cap \overline{U}_k) = m(x) + s(x)$$
and
	$$ \# ( f_k^{-1} (p_0) \cap \overline{U}_k) = s(x).$$
for all points $z_0$ in $\overline{D}$ and all points $p_0$ in $\P^1 \setminus \overline{D}$. 
\end{prop}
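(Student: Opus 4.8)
The plan is to reduce both equalities to a single zero-counting statement about holomorphic functions. Fix a target point $z_0 = (\alpha_0 : \beta_0) \in \PP^1$, write $f_k = (P_k : Q_k)$ with $(P_k, Q_k) \to (P,Q) = \Phi$ in $\Ratbar_d$, and set $g_k = \beta_0 P_k - \alpha_0 Q_k$ in an affine chart around $x$. Then $z$ lies in $f_k^{-1}(z_0)$ exactly when $g_k(z) = 0$, with the multiplicity of $z$ as a preimage equal to $\ord_z g_k$, so $\#\bigl(f_k^{-1}(z_0) \cap \overline{U}_k\bigr)$ (counted with multiplicity) is the number of zeros of $g_k$ in $\overline{U}_k$. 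Writing $H = \gcd(P,Q)$, $P = H P_1$, $Q = H Q_1$, and $\phi = (P_1 : Q_1)$, the $g_k$ converge uniformly on compact subsets of the chart — including across the root $x$ of $H$, where $f_k$ itself fails to converge — to
\[
	g := \beta_0 P - \alpha_0 Q = H \cdot h, \qquad h := \beta_0 P_1 - \alpha_0 Q_1 ,
\]
where $h$ vanishes precisely at the $\phi$-preimages of $z_0$ and is not identically zero since $\phi$ is nonconstant. Thus in the limit the local zero count of $g_k$ splits into the contribution of $H$ — which is exactly the surplus $s(x)$ — and the contribution of $h$, the number of $\phi$-preimages of $z_0$.

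To carry this out I would first replace the $k$-dependent domain $\overline{U}_k$ by the fixed domain $V = V_x$ bounded by $\gamma_x$. Since $f_k \to \phi$ uniformly on $\gamma_x$ (which avoids $\{H = 0\}$) and $\phi(\gamma_x) = \eta$, for $z_0 \notin \eta$ the function $g_k$ is bounded away from $0$ on a fixed neighborhood of $\gamma_x$ once $k \gg 0$; since $\gamma_k$ — hence the symmetric difference of $U_k$ and $V$ — lies in that neighborhood, $g_k$ has the same number of zeros in $U_k$, in $V$, and in any region sandwiched between them. Second, I would apply Hurwitz's theorem on $\overline{V}$: as $g_k \to g$ uniformly there and $g = H h$ has no zero on $\partial V = \gamma_x$ when $z_0 \notin \eta$, the number of zeros of $g_k$ in $V$ equals that of $g$ in $V$ for all $k \gg 0$. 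Third, I would count the zeros of $g = H \cdot h$ in $V$: by the choice of $\gamma_x$ the only zero of $H$ in $\overline{V}$ is $x$, contributing $\ord_x H = s(x)$; and $h$ vanishes in $V$ exactly at the $\phi$-preimages of $z_0$ in $V$, of which there are $\deg(\phi \colon V \to D) = m(x)$ when $z_0 \in D$ and none when $z_0 \notin \overline{D}$ (since then $z_0 \notin \phi(V) = D$). Adding the two contributions gives $m(x) + s(x)$ in the first case and $s(x)$ in the second.

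Several points need care. The domains $U_k$ genuinely vary with $k$, so the comparison with the fixed $V$ in the first step must be made precise; this is exactly where one uses that $f_k \to \phi$ uniformly off $\{H = 0\}$, forcing $\gamma_k \to \gamma_x$ and $f_k(\gamma_k) = \eta$. One should also choose the radius of $D$ so that $\eta$ avoids the finitely many critical values of $\phi$ and $\gamma_x$ avoids the finitely many roots of $H$ other than $x$; then $\phi \colon V \to D$ is an honest degree-$m(x)$ branched cover, $f_k \colon \gamma_k \to \eta$ is an $m(x)$-fold covering for $k \gg 0$, and the borderline case $z_0 \in \eta$ is handled because the $m(x)$ boundary preimages then lie on $\gamma_k \subset \overline{U}_k$. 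Finally, the counts are to be read with multiplicity, which is what makes the identities hold for every $z_0$ and not merely for regular values. The genuinely new content is conceptual: the surplus $s(x)$ appears because the roots of $P_k$ and $Q_k$ colliding at $x$ in the limit are precisely those that coalesce into the common factor $H$ — invisible to the reduction $\phi$, yet contributing $s(x)$ preimages of \emph{every} target point. I expect the main obstacle to be keeping the bookkeeping straight between this algebraic picture ($g_k$ and Hurwitz) and the topological one ($\gamma_k$, $U_k$, and winding numbers), rather than any single hard estimate.
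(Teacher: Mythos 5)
Your proof is correct but takes a genuinely different route from the paper's. The paper reduces to $z_0 = 0$ and $p_0 = \infty$, applies the Argument Principle directly to $f_k$ on the moving contours $\gamma_k$ to get $\#\bigl(f_k^{-1}(0) \cap U_k\bigr) - \#\bigl(f_k^{-1}(\infty) \cap U_k\bigr) = \int_{\gamma_k} f_k'/f_k = m(x)$, and then separately counts the poles inside $U_k$ as $s(x)$ by observing that the roots of $Q_k$ converging to $x$ are governed by the factor $H$. You instead apply Hurwitz's theorem to the linear form $g_k = \beta_0 P_k - \alpha_0 Q_k$, exploiting that $g_k$ converges uniformly --- even across the indeterminacy point $x$, where $f_k$ itself does not --- to the factored limit $g = H\cdot h$; the factor $H$ supplies the surplus $s(x)$ and the factor $h$ supplies the $m(x)$ local $\phi$-preimages. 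The two mechanisms are essentially dual: the paper's Argument Principle identity is the difference of your Hurwitz counts at $z_0 = 0$ and $z_0 = \infty$, while your factorization $g = Hh$ is the paper's pole count made uniform in the target. Your version buys a single argument valid for all $z_0$ at once and makes the origin of the surplus multiplicity algebraically transparent; the paper's avoids the transition from the moving domains $U_k$ to the fixed domain $V$ that Hurwitz requires, since the Argument Principle integral already lives on $\gamma_k$. Both treat the borderline case $z_0 \in \eta$ the same way, by observing that the $m(x)$ boundary preimages lie on $\gamma_k \subset \overline U_k$ while the $s(x)$ surplus preimages converge into the interior near $x$.
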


\proof
The proof is an application of the Argument Principle from complex analysis.  Assume first that $z_0=0\in D$ and $p_0 =\infty \not\in\overline{D}$.  Then
	$$\# ( f_k^{-1} (z_0) \cap U_k) =  \# ( f_k^{-1} (z_0) \cap \overline{U}_k)  = \#\; \mathrm{Zeroes}(f_k) \mbox{ inside } U_k,$$
and
	$$ \# ( f_k^{-1} (p_0) \cap U_k) =  \# ( f_k^{-1} (p_0) \cap \overline{U}_k) = \# \; \mathrm{Poles}(f_k) \mbox{ inside } U_k.$$
By the Argument Principle, for all large $k$ we have 
	$$\#( f_k^{-1} (z_0) \cap U_k)  - \#  ( f_k^{-1} (p_0) \cap U_k) \; =  \; \int_{\gamma_k} \frac{f_k'}{f_k}  \; =  \; m(x).$$ 
On the other hand, we may compute directly that
\begin{equation*} \label{poles}
	s(x) = \# \; \mathrm{Poles}(f_k) \mbox{ inside } U_k 
\end{equation*}
for all sufficiently large $k$, since $f_k \to \Phi$. Indeed, $H(x) = 0$ with multiplicity $s(x)$ (and $\phi(x) \not=\infty$), so there are exactly $s(x)$ poles converging to $x$ as $k \to \infty$.  (Compare \cite[Lem.~14]{DeMarco_Boundary_Maps_2005}.)   It remains to handle the case where $z_0\in \eta = \del D$.  By construction, the boundary $\gamma_k$ of $U_k$ is mapped with degree $m(x)$ over $\eta$; and by viewing $z_0$ as the point $\infty$, we see that there must be $s(x)$ preimages of $z_0$ converging to $x$ as $k\to \infty$.  
\qed

\subsection{Paired measures}
\label{Sec: Paired measures}
Let $C,E$ be two copies of $\PP^1$.  A \textbf{paired measure} $(\mu_C, \mu_E)$ is a pair of Borel probability measures $\mu_C$ on $C$ and $\mu_E$ on $E$.  Let $\{A_k\}$ be a sequence of M\"obius transformations in $\Rat_1$.   We say that a sequence of Borel probability measures $\{\mu_k\}$ on $\PP^1$ \textbf{converges $\{A_k\}$-weakly} to the paired measure $(\mu_C, \mu_E)$  if
	$$\mu_k \to \mu_C \qquad \mbox{ and } \qquad  A_{k*} \mu_k \to \mu_E$$
weakly. 

Let $\Phi$ be an element of $\Ratbar_d$ with reduction $\phi$.  We define a measure $\Phi^*(\mu_C,\mu_E)$ on $\PP^1$ by
the formula
	\benn 
		\Phi^*(\mu_C,\mu_E) := \phi^*\mu_E  + \sum_{x\in \PP^1} s(x)\delta_x.
	\eenn
Recall that $s(x)$ is defined in (\ref{complex m and s}).

\begin{lem}  \label{prob}
For any paired measure $(\mu_C,\mu_E)$, the measure $\Phi^*(\mu_C,\mu_E)$ has total mass $d$.  
\end{lem}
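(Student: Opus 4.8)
The plan is to compute the total mass of $\Phi^*(\mu_C,\mu_E)$ by adding up the masses of its two summands. Since $\mu_E$ is a probability measure on $E = \PP^1$ and $\phi$ is the reduction of $\Phi$, which has some degree $e := \deg \phi$ with $0 \le e < d$, the pullback $\phi^*\mu_E$ has total mass $e$; indeed $\phi^*\mu_E$ is defined by counting preimages with local-degree multiplicity, so $\phi^*\mu_E(\PP^1) = \sum_{y \in \phi^{-1}(z)} m(y) = e$ for generic $z$, and more carefully one integrates the constant function $1$ against $\phi^*\mu_E$ to get $e \cdot \mu_E(\PP^1) = e$. (When $\phi$ is constant, i.e. $e = 0$, the convention is $\phi^*\mu_E = 0$, so this still holds.) It therefore remains to show that $\sum_{x \in \PP^1} s(x) = d - e$.

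The identity $\sum_x s(x) = d - e$ is the heart of the matter, and it follows from the definition of the surplus multiplicity together with the fact that $H = \gcd(P,Q)$ is the ``removed'' part of $\Phi$. First I would fix a normalization in which $\Phi = (P,Q)$ with $P,Q$ homogeneous of degree $d$ in two variables, not both zero. By definition of $H = \gcd(P,Q)$ we may write $P = H \cdot \tilde P$ and $Q = H \cdot \tilde Q$ with $\tilde P, \tilde Q$ coprime homogeneous forms of degree $d - \deg H$; the reduction $\phi$ is exactly the rational map $(\tilde P : \tilde Q)$, so $e = \deg \phi = d - \deg H$, hence $\deg H = d - e$. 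On the other hand $H$ is a homogeneous form of degree $\deg H$ in two variables over $\C$, which is algebraically closed, so it factors completely into linear forms; the number of its roots in $\PP^1$ counted with multiplicity is $\deg H$. By the very definition $s(x) = \ord_x H$, so $\sum_{x \in \PP^1} s(x) = \deg H = d - e$. Note the sum is finite since $s(x) = 0$ except at the (finitely many) roots of $H$.

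Combining the two computations gives total mass $e + (d - e) = d$, as claimed. One small point to be careful about: the statement allows an arbitrary $\Phi \in \Ratbar_d$, including the case where the reduction $\phi$ is constant (indeed the very next paragraph of the paper will only impose the nonconstant-reduction hypothesis where it is genuinely needed). I would therefore state the pullback-mass fact $\phi^*\mu_E(\PP^1) = \deg\phi$ with the convention that a constant map has degree $0$ and contributes the zero measure, so the argument is uniform. The main — really the only — obstacle is making the identification $\deg H = d - \deg\phi$ clean; once the homogeneous bidegree bookkeeping $P = H\tilde P$, $Q = H\tilde Q$ is set up, everything else is immediate from $\C$ being algebraically closed and $\mu_E$ having mass $1$. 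No analysis or the Argument Principle is needed here; this lemma is purely formal, in contrast to Proposition~\ref{counting}.
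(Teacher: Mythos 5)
Your proof is correct and follows essentially the same route as the paper's one-line degree count: $\Phi^*(\mu_C,\mu_E)(\PP^1) = \deg(\phi) + \sum_x s(x) = \deg(\phi) + \deg(H) = d$. You have simply spelled out the bookkeeping (factoring $P = H\tilde P$, $Q = H\tilde Q$ and handling the constant-reduction convention) that the paper leaves implicit.
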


\begin{proof}
The proof is a simple degree count:
	\[
		\Phi^*(\mu_C,\mu_E)(\PP^1) = \deg(\phi) + \sum_{x \in \PP^1} s(x) = \deg(\phi) + \deg(H) = d.  \qedhere
	\]
\end{proof}

%

\subsection{Weak limits satisfy the pullback relation}
Fix a sequence $f_k$ in $\Rat_d$ that converges to $f_0 \in \del\Rat_d$.  We also fix the sequence $A_k$ of M\"obius transformations guaranteed by Lemma~\ref{nontrivial limit}, such that $A_k \circ f_k$ converges to a point $\Phi \in \Ratbar_d$ with gcd $H$ and reduction $\phi$ of degree $>0$.   If the reduction of $f_0$ is nonconstant,  we let $A_k$ be the identity for all $k$, so that $\Phi = f_0$.  (Note that if the reduction of $f_0$ is constant, it is possible that $\deg \phi = d$.) 

Let $C,E$ denote two copies of $\P^1$ as in \S\ref{Sec: Paired measures}.  If $f_0$ has nonconstant reduction, then $A_k(z) = z$ for all $k$ implies that $\mu_k \to (\mu_C, \mu_E)$ $\{A_k\}$-weakly if and only if $\mu_C = \mu_E$ and $\mu_k \to \mu_C$ weakly.

\begin{thm}  \label{paired pullback formula}
Any $\{A_k\}$-weak limit $(\mu_C, \mu_E)$ of the maximal measures $\mu_{f_k}$ will satisfy the pullback formula 
	$$\frac{1}{d} \, \Phi^*(\mu_C, \mu_E) = \mu_C$$
as measures on $C = \PP^1$. 
\end{thm}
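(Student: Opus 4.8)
The plan is to verify the pullback formula locally, one point of $C=\PP^1$ at a time, by testing against a small disk $\overline{D}$ around an arbitrary point $z_0$ and using Proposition~\ref{counting} to count preimages near each indeterminacy point. First I would record the basic structure: since $\mu_{f_k}$ is the maximal measure, it satisfies $\frac{1}{d} f_k^* \mu_{f_k} = \mu_{f_k}$, and since the $\{A_k\}$-weak limit is $(\mu_C,\mu_E)$ we have $\mu_{f_k}\to\mu_C$ and $A_{k*}\mu_{f_k}\to\mu_E$ weakly. Replacing $f_k$ by $A_k\circ f_k$ (which does not change $f_k^*$ on measures, only reindexes the target by $A_k$), one is reduced to showing $\frac{1}{d}\,\Phi^*(\mu_C,\mu_E)=\mu_C$ where now $A_k\circ f_k\to\Phi$ with nonconstant reduction $\phi$. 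The identity $\frac{1}{d}(A_k\circ f_k)^*(A_{k*}\mu_{f_k})=\mu_{f_k}$ is the right form to take limits in, because the target measures $A_{k*}\mu_{f_k}$ converge to $\mu_E$ and the source measures converge to $\mu_C$.

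The heart is the limit computation. Fix $z_0\in\PP^1$ and a small loop $\eta$ bounding a disk $D$ around $z_0$; shrink $D$ so that $\partial D$ carries no mass of $\mu_E$ and so that $D$ contains no critical value of $\phi$ and no point of $\phi(\{H=0\})$ other than possibly $z_0$. For each point $x$ in the finite set $\phi^{-1}(z_0)\cup\{H=0\}$, Proposition~\ref{counting} gives loops $\gamma_k$ around $x$ with $\#\big((A_k\circ f_k)^{-1}(\overline D)\cap\overline U_k\big)$ eventually equal to $m(x)+s(x)$ if $z_0\in\overline D$ (more precisely $z_0$ interior) and to $s(x)$ on the complement — the statement we need is really the mass statement: $(A_k\circ f_k)^*(A_{k*}\mu_{f_k})(\overline U_k)$ converges to $m(x)\,\mu_E(\overline D)+s(x)\cdot 1$, because $A_{k*}\mu_{f_k}\to\mu_E$ and the map $A_k\circ f_k$ restricted to $U_k$ is an $m(x)$-to-one branched cover over $D$ together with $s(x)$ "extra" sheets whose images escape any fixed neighborhood of $z_0$ (they limit to the poles of $\Phi$ created by the vanishing of $H$). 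Away from these finitely many $x$, $A_k\circ f_k\to\phi$ locally uniformly, so $(A_k\circ f_k)^*(A_{k*}\mu_{f_k})$ restricted to a fixed compact set disjoint from all the $\gamma_k$'s converges to $\phi^*\mu_E$ there. Summing, $\frac{1}{d}(A_k\circ f_k)^*(A_{k*}\mu_{f_k})(\overline D)\to \frac{1}{d}\big(\phi^*\mu_E(\overline D)+\sum_{x:\,\phi(x)=z_0} s(x)\big)$ when $\overline D$ shrinks to a point; but this left side also equals $\mu_{f_k}(\overline D)\to\mu_C(\overline D)$. Since the disks $\overline D$ around arbitrary $z_0$ with $\mu_E$-null boundary generate the Borel sets, this yields $\mu_C=\frac{1}{d}\big(\phi^*\mu_E+\sum_x s(x)\delta_x\big)=\frac{1}{d}\Phi^*(\mu_C,\mu_E)$.

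I expect the main obstacle to be the bookkeeping at the indeterminacy points: making rigorous the claim that on the annular region between $\gamma_k$ and a slightly larger fixed loop the pushforward measure does not "leak," i.e.\ controlling exactly where the $s(x)$ surplus preimages of points of $\overline D$ go. This is where Proposition~\ref{counting} and the Argument Principle do the real work — one must check that for $z_0$ on $\partial D$ the count is continuous (no mass sitting on the boundary circle in the limit, which is why we chose $\partial D$ to be $\mu_E$-null), and that the total mass adds up to exactly $d$ with nothing unaccounted for on $\PP^1\setminus(\text{finitely many }\overline U_k\cup\text{compact core})$; Lemma~\ref{prob} is the sanity check that the candidate limit measure $\Phi^*(\mu_C,\mu_E)$ has the right total mass $d$, so once the local inequalities are established in both directions (mass near $z_0$ is at least, and at most, the predicted amount) the global equality follows by a mass-balance argument with no room to spare.
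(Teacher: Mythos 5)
Your overall plan matches the paper's: invoke the pullback relation $\tfrac{1}{d}f_k^*\mu_k=\mu_k$, rewrite it as $\tfrac{1}{d}(A_k\circ f_k)^*(A_{k*}\mu_k)=\mu_k$, localize near each indeterminacy point via Proposition~\ref{counting}, use local uniform convergence of $A_k\circ f_k\to\phi$ away from $\{H=0\}$, and close with the mass count of Lemma~\ref{prob}. But the middle of your argument has a source/target confusion, and a related strategic overcomplication, that the paper's partition-of-unity approach is designed precisely to avoid.

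The confusion: you introduce $D$ as a disk \emph{in the target} around $z_0$ (you constrain it to avoid critical values of $\phi$ and points of $\phi(\{H=0\})$), but by the end of the paragraph you write ``$\frac{1}{d}(A_k\circ f_k)^*(A_{k*}\mu_{f_k})(\overline D)\to\cdots$; but this left side also equals $\mu_{f_k}(\overline D)\to\mu_C(\overline D)$,'' which measures $\overline D$ with a measure living on the \emph{source} sphere. Proposition~\ref{counting} attaches a target disk $D$ to each source point $x$ (around $\phi(x)$), and a $k$-dependent source domain $U_k$; to say anything about $\mu_C$ on a fixed source set you then have to show $\mu_k(\overline U_k)\to\mu_C(\overline U_x)$, which requires controlling both the moving domain $U_k$ and the mass of $\mu_C$ on its boundary. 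This step is not in your sketch, and it is exactly where the ``leak'' you worry about hides.

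The overcomplication: you try to obtain \emph{exact} convergence at the indeterminacy points by insisting $\partial D$ be $\mu_E$-null and later say ``local inequalities in both directions.'' The paper does not need either. It splits a test function $\psi$ by a partition of unity $b_r+b_s$: on the $b_r$ piece (away from $I(\Phi)$) uniform convergence gives exact convergence of $\int b_r\psi\,f_k^*\mu_k$; on the $b_s$ piece (near $I(\Phi)$) it uses Proposition~\ref{counting} to get a one-sided $\liminf$ lower bound, $\liminf_k A_{k*}\mu_k(\overline D)\geq\mu_E(\{\phi(x)\})$, which is automatic from weak convergence because $\phi(x)$ lies in the interior of $\overline D$ --- no null-boundary hypothesis, no moving domains on the left-hand side. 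The combination gives only the inequality $\mu_C\geq\tfrac{1}{d}\Phi^*(\mu_C,\mu_E)$, and then Lemma~\ref{prob} (both are probability measures) forces equality. You do invoke the mass-balance at the end, but pairing it with ``inequalities in both directions'' is internally inconsistent: the upper bound is global, not local, and trying to manufacture a local upper bound is precisely the unnecessary technical burden the paper's test-function formulation avoids.
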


\begin{proof}
	Without loss, we may replace $f_k$ with a subsequence in order to assume that $\mu_{f_k}$ converges $\{A_k\}$-weakly to $(\mu_C, \mu_E)$.  By the definition of $\{A_k\}$-weak convergence, and because $d^{-1} f_{k}^*\mu_k = \mu_k$ for all $k$, we know that 
\begin{equation} \label{convergence to pi*}
	d^{-1} f_{k}^*\mu_k \to \mu_C \quad \text{as} \quad k \to \infty.
\end{equation}
We need to show that the weak limit of $f_{k}^*\mu_k$ can also be expressed as $\Phi^*(\mu_C,\mu_E)$.  

Let $I(\Phi)$ denote the union of the roots of $H$. Let $U$ be a small neighborhood of $I(\Phi)$ in $\PP^1$.  Choose a partition of unity 
	$$b_r + b_s \equiv 1,$$
subordinate to the open cover $\{\PP^1\smallsetminus I(\Phi), U\}$
so that $b_r \equiv 1$ on $\P^1 \smallsetminus U$ and $b_s \equiv 1$ on a small neighborhood of $I(\Phi)$ inside $U$; as usual, $b_r$ and $b_s$ are non-negative continuous functions.  

Fix a non-negative continuous function $\psi$ on~$\PP^1$.  Recall that the pushforward of $\psi$ by $f\in\Rat_d$ can be defined by
	$$f_*\psi (y) = \sum_{f(x) = y} \psi(x),$$
where pre-images are counted with multiplicity.  Because $b_r$ vanishes near $I(\Phi)$, and because $A_k \circ f_k$ converges uniformly to $\phi$ on compact sets outside $I(\Phi)$, we have uniform convergence of functions
	$$(A_k \circ f_{k})_* (b_r\psi) \to \phi_* (b_r\psi),$$
and therefore
\begin{equation} \label{regular}
	\begin{aligned}
	\int b_r \psi \ (f_k^* \mu_k) &= \int b_r \psi \ \left( (A_k \circ f_k)^* A_{k*} \mu_k \right) \\
		&= \int \left(A_k \circ f_k \right)_* (b_r \psi) \ A_{k*} \mu_k \rightarrow \int \phi_*(b_r \psi) \ \mu_E 
		= \int b_r \psi \ \Phi^*(\mu_C, \mu_E), 
	\end{aligned}
\end{equation} 
by the weak convergence of $A_{k*} \mu_k$ to $\mu_E$.  Upon shrinking the neighborhood $U$, \eqref{convergence to pi*} and \eqref{regular} together will show that 
	\begin{equation} \label{notIndeterminate}
		\int_{\PP^1 \smallsetminus I(\Phi)} \psi \, \mu_C = \frac{1}{d} \int_{\PP^1 \smallsetminus I(\Phi)} \psi \, \Phi^*(\mu_C, \mu_E)
	\end{equation}
for any test function $\psi$.  

Fix $x\in I(\Phi)$.  As in \S\ref{Sec: Counting pre-images}, let $\eta$ be a small loop around $\phi(x)$ that bounds an open disk $D$, and let $\gamma_x$ be the small loop around $x$ sent with degree $m(x)$ onto $\eta$ by $\phi$.   Choose $\gamma_x$ small enough so that it does not contain any point in $I(\Phi)$ other than $x$ itself; we shall further assume that it is contained in the neighborhood where $b_s \equiv 1$.  Because $A_k\circ f_k$ converges locally uniformly to $\phi$ on $\P^1\smallsetminus I(\Phi)$, for each $k\gg0$ there is a small loop $\gamma_k$ around $x$ that is mapped by $f_k$ with degree $m(x)$ onto $\eta$; for large $k$, this $\gamma_k$ is also contained in the region where $b_s \equiv 1$.  Let $U_{x,k}$ be the domain bounded by $\gamma_k$.  

We now apply Proposition \ref{counting} to the sequence $A_k\circ f_k$.  For $x \in I(\Phi)$, let $\psi_{\inf}(x)$ denote the infimum of $\psi$ on the component of $U$ containing $x$. For all $k$ sufficiently large, 
\begin{align*}
	 \int_{\P^1} b_s\psi \ (f_{k}^*\mu_k) 
	 &\geq	 \sum_{x\in I(\Phi)} \psi_{\inf}(x) \int_{\overline{U}_{x,k}}  \ (A_k \circ f_k)^* A_{k*} \mu_k  \\
	&= 	  \sum_{x\in I(\Phi)} \psi_{\inf}(x) \int_{\P^1} \# \left( (A_k \circ f_k)^{-1}(y) \cap \overline{U}_{x,k} \right)  \, A_{k*}\mu_k(y) \\
	&=    \sum_{x\in I(\Phi)} \psi_{\inf}(x) \Big[  s(x)  A_{k*}\mu_k(\P^1\setminus \overline{D}) +  (m(x)+s(x))  A_{k*}\mu_k(\overline{D}) \Big]   \\
	 &=     \sum_{x\in I(\Phi)} \psi_{\inf}(x) \left[ s(x) + m(x) A_{k*}\mu_k(\overline{D})\right]. 
\end{align*}
Letting $k\to\infty$, the $\{A_k\}$-weak convergence of measures gives
	$$\liminf_{k\to\infty} A_{k*}\mu_k(\overline{D}) \geq  \mu_E(\{\phi(x)\}).$$
Because $d^{-1} f_{k}^*\mu_k$ converges weakly to $\mu_C$, we deduce that 
	$$\int b_s\psi \, \mu_C \geq \frac{1}{d} \sum_{x\in I(\Phi)} \left[ s(x) + m(x) \mu_E(\{\phi(x)\}) \right] \psi_{\min}(x).$$
Shrinking the neighborhood $U$ of $I(\Phi)$, we obtain
\begin{equation} \label{singular}
\int_{I(\Phi)} \psi \, \mu_C \geq \frac{1}{d} \sum_{x\in I(\Phi)}   \left[ s(x) 
		+  m(x) \, \mu_E(\{\phi(x)\}) \right] \psi(x)  = \frac{1}{d} \int_{I(\Phi)} \psi \, \Phi^*(\mu_C, \mu_E). 
\end{equation}
As $\psi$ was arbitrary, adding \eqref{notIndeterminate} to \eqref{singular} yields the inequality of positive measures 
	$$\mu_C \geq \frac{1}{d} \, \Phi^*(\mu_C, \mu_E).$$
  But both are probability measures (by Lemma~\ref{prob}), so we must have equality.
\end{proof}

\subsection{Proof of Theorem~A}
Let $f_k$ be a sequence in $\Rat_d$ converging to $f_0 \in \del \Rat_d$ and with maximal measures $\mu_k$ converging to a measure $\mu$.  From Lemma~\ref{nontrivial limit}, there is a sequence $A_k\in\Rat_1$ so that $A_k\circ f_k$ converges to $\Phi\in\Ratbar_d$ with reduction $\phi$ of positive degree. Passing to subsequences for each iterate $n$ and applying a diagonalization argument, we choose sequences $\{A_{n,k}: k\in\mathbb{N}\}$ in $\Rat_1$ so that
	$$A_{n,k} \circ f_k^n \to \Phi_n \quad \mbox { as } k\to\infty$$
in $\Ratbar_{d^n}$ with reduction $\phi_n$ so that $\deg \phi_n >0$ for every iterate $n$. 
By sequential compactness of the space of probability measures on $\PP^1$ (and another diagonalization argument, if necessary), we may assume that $\mu_k$ converges $\{A_{n,k}\}$-weakly to a paired measure $(\mu, \mu_{E_n})$ for each $n \geq 1$. 

Since the measures $\mu_k$ are also the measures of maximal entropy for iterates $f_k^n$, Theorem~\ref{paired pullback formula} implies that 
	$$\mu(\{p\}) = \frac{1}{d^n}  \Phi^*_n(\mu, \mu_{E_n}) (\{p\}) \geq \frac{s_{\Phi_n}(p)}{d^n}$$
for any iterate $n$ and any point $p\in\P^1$; recall that the integers $s_{\Phi_n}(p)$ are defined in (\ref{complex m and s}).  Degree counting shows that $\sum_{p \in \PP^1} s_{\Phi_n}(p) = d^n - \deg \phi_n$, which yields
	$$
		1 \geq \sum_{p \in \PP^1} \mu(\{p\}) \geq 1 - \frac{\deg \phi_n}{d^n}.
	$$
If $ \deg\phi_n  = o(d^n)$ as $n\to \infty$, then we see immediately that $\mu$ is a countable sum of atoms.  It remains to treat the case where $\deg \phi_n \not= o(d^n)$.  

The next lemma shows that the reduction maps $\phi_n$ are not unrelated.

\medskip

\begin{lem}  \label{composition}
The reduction maps $\phi_n$ form a composition sequence.  That is, there exist rational functions $\phi_{n+1, n}$ of positive degrees $\leq d$ so that 
	$$\phi_{n+1}  = \phi_{n+1, n}\circ \phi_n$$
for each $n\geq 1$.  Moreover, $A_{n+1,k} \circ f_k \circ A_{n,k}^{-1}$ converges to $\phi_{n+1,n}$ away from finitely many points in $\P^1$.
\end{lem}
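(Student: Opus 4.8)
The plan is to build the intermediate maps $\phi_{n+1,n}$ directly from the Möbius sequences and then verify the composition identity pointwise off a finite bad set. First I would set $M_{n,k} = A_{n+1,k} \circ f_k \circ A_{n,k}^{-1}$, a sequence in $\Rat_{d}$ (since conjugating/composing $f_k\in\Rat_d$ by Möbius maps keeps degree $d$). The key observation is that $M_{n,k}$ is essentially the cocycle relating the $n$-th and $(n+1)$-st "renormalizations" of the family: from $A_{n+1,k}\circ f_k^{n+1} = M_{n,k}\circ A_{n,k}\circ f_k^n$ and the known convergences $A_{n,k}\circ f_k^n \to \Phi_n$, $A_{n+1,k}\circ f_k^{n+1}\to \Phi_{n+1}$ (with nonconstant reductions $\phi_n$, $\phi_{n+1}$ of positive degree), one gets, away from the finitely many points where $\Phi_n$ or $\Phi_{n+1}$ degenerates and where the limits blow up, the pointwise identity
\[
\phi_{n+1}(p) = \lim_{k\to\infty} A_{n+1,k}\circ f_k^{n+1}(p) = \lim_{k\to\infty} M_{n,k}\big(A_{n,k}\circ f_k^n(p)\big).
\]
So I would first argue that $M_{n,k}$ itself converges (after passing to a subsequence) in $\Ratbar_d$ to some $\Phi_{n+1,n}$ with reduction $\phi_{n+1,n}$, and then read off $\phi_{n+1} = \phi_{n+1,n}\circ\phi_n$ from the displayed limit.

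The mechanism for the convergence and nonconstancy of $\phi_{n+1,n}$ is exactly the uniqueness clause of Lemma~\ref{nontrivial limit}, and I would invoke it in the following shape. Having fixed the sequence $A_{n,k}\circ f_k^n \to \Phi_n$, apply Lemma~\ref{nontrivial limit} to the sequence $A_{n,k}\circ f_k^n$ post-composed with $f_k$ — that is, to $f_k \circ (A_{n,k}\circ f_k^n)^{-1}$ acting on the target, equivalently to $A_{n+1,k}\circ f_k^{n+1}$ as a sequence in $\Rat_{d^{n+1}}$ renormalized through two different Möbius families, $A_{n+1,k}$ and $(M_{n,k})\circ A_{n,k}$. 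Both normalizations produce nonconstant-reduction limits ($\phi_{n+1}$ in the first case; in the second case the limit is $\phi_{n+1,n}\circ\phi_n$ once we know $M_{n,k}$ converges), so the uniqueness part of Lemma~\ref{nontrivial limit} forces the ratio — which is essentially $M_{n,k}$ composed with a convergent-to-identity correction — to converge in $\Rat_1$ or at worst to a nonconstant limit in $\Ratbar_d$. The cleanest packaging: by sequential compactness of $\Ratbar_d$ pass to a subsequence with $M_{n,k}\to\Phi_{n+1,n}$; the pointwise limit above shows $\phi_{n+1,n}\circ\phi_n = \phi_{n+1}$ on a cofinite set, hence everywhere by continuity (density of $\PP^1$ minus a finite set), and since $\phi_{n+1}$ is nonconstant, $\phi_{n+1,n}$ must be nonconstant, so $0 < \deg\phi_{n+1,n} \le d$. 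Uniqueness of the limit (again via the argument in Lemma~\ref{nontrivial limit}) then upgrades the subsequential convergence to convergence of the full sequence $M_{n,k}$, giving the final sentence of the lemma.

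One technical point I would be careful about: the bad set. The identity $\phi_{n+1}(p)=\lim_k M_{n,k}(A_{n,k}\circ f_k^n(p))$ requires $p$ to avoid (i) the finitely many points where $A_{n,k}\circ f_k^n$ fails to converge to $\phi_n$ locally uniformly, i.e. the zeros of the relevant gcd $H_n$, (ii) the finitely many points where $A_{n+1,k}\circ f_k^{n+1}$ fails similarly, and (iii) points $p$ with $\phi_n(p)$ landing in the bad locus of $M_{n,k}\to\Phi_{n+1,n}$ — but that last locus is finite and $\phi_n$ is a nonconstant rational map, so its preimage is again finite. Assembling these into one finite exceptional set and using local uniform convergence on its complement is routine; I would spell it out only to the extent of naming the three sources.

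The main obstacle, I expect, is not any single estimate but correctly orchestrating the diagonal/subsequence bookkeeping so that the $A_{n,k}$ chosen in the proof of Theorem~A (for all iterates simultaneously) are compatible with extracting convergent $M_{n,k}$ for every $n$ at once — and making sure the appeal to the uniqueness half of Lemma~\ref{nontrivial limit} is legitimate, since that lemma is stated for two normalizing sequences of a single divergent sequence, whereas here one "sequence" is $A_{n+1,k}\circ f_k^{n+1}$ and the competing normalization is obtained by threading $A_{n,k}\circ f_k^n$ through the a priori unknown $M_{n,k}$. The fix is to run the compactness extraction for $M_{n,k}$ first and only then compare normalizations, so that by the time Lemma~\ref{nontrivial limit}'s uniqueness is invoked both competing limits genuinely have nonconstant reduction.
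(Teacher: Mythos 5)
Your argument is correct, and it reaches the same conclusion via a more direct route than the paper. The paper first applies the existence half of Lemma~\ref{nontrivial limit} to the sequence $f_k\circ A_{n,k}^{-1}$ to produce normalizers $C_k$ with $C_k\circ f_k\circ A_{n,k}^{-1}\to H\phi$ and $\deg\phi>0$; it then invokes a ``continuity of degenerate composition'' lemma (from \cite{DeMarco_JAMS_2007}) to compute the limit of $C_k\circ f_k^{n+1}=(C_k\circ f_k\circ A_{n,k}^{-1})\circ(A_{n,k}\circ f_k^n)$, and finally applies the uniqueness half of Lemma~\ref{nontrivial limit} to identify $B=\lim A_{n+1,k}\circ C_k^{-1}$ and set $\phi_{n+1,n}=B\circ\phi$. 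You instead skip the intermediate normalization entirely: extract a subsequential limit $\Phi_{n+1,n}$ of $M_{n,k}=A_{n+1,k}\circ f_k\circ A_{n,k}^{-1}$ directly by compactness of $\Ratbar_d$, read off the identity $\phi_{n+1}=\phi_{n+1,n}\circ\phi_n$ from the pointwise/locally uniform limit off a finite exceptional set, and then deduce $\deg\phi_{n+1,n}>0$ a posteriori from the nonconstancy of $\phi_{n+1}$ rather than arranging it in advance via $C_k$. This buys you a more self-contained proof — no external appeal to the degenerate-composition lemma — at the cost of doing the uniform-convergence bookkeeping by hand (which you correctly flag as routine). Your handling of the ``full sequence converges'' upgrade is also right: because $\phi_n$ is nonconstant, the relation $\phi_{n+1}=\phi_{n+1,n}\circ\phi_n$ pins down $\phi_{n+1,n}$ uniquely, so all subsequential limits agree and the whole sequence $M_{n,k}$ converges to $\phi_{n+1,n}$ away from a finite set — which is exactly the statement the lemma asserts.
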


\begin{proof}
This lemma follows from uniqueness in Lemma \ref{nontrivial limit}.  Write $\Phi = H\phi$ for any $\Phi\in\Ratbar_d$, where $H$ is the gcd of the two polynomials defining $\Phi$ and $\phi$ is the reduction.  As $k\to\infty$, we have $A_{n,k} \circ f_k^n \to H_n\phi_n$ and $A_{n+1,k}\circ f_k^{n+1} \to H_{n+1} \phi_{n+1}$.  Consider the sequence $f_k\circ A_{n,k}^{-1}$ in $\Rat_d$.  Passing to a subsequence, there exists a sequence $C_k$ of M\"obius transformations so that $C_k\circ f_k\circ A_{n,k}^{-1} \to H\phi$ with $\deg \phi>0$.  But then, by the continuity of degenerate composition (exactly as in \cite[Lemma 2.6]{DeMarco_JAMS_2007}), we have
	$$(C_k\circ f_k \circ A_{n,k}^{-1} ) \circ (A_{n,k} \circ f_k^n) = C_k \circ f_k^{n+1} \to (H_n^d \cdot (H\circ \phi_n)) \; \phi\circ\phi_n.$$
But uniqueness in Lemma \ref{nontrivial limit} then implies that there exists a M\"obius transformation $B = \lim_{k\to\infty} A_{n+1, k}\circ C_k^{-1} $ so that $\phi_{n+1} = B\circ \phi\circ\phi_n$.  We set $\phi_{n+1,n} = B\circ \phi$.     
\end{proof}

\bigskip

Lemma \ref{composition} implies that the degree of $\phi_n$ may be computed by 
	\[
		\deg \phi_n = \deg \phi_1 \cdot \prod_{j = 1}^{n-1} \deg \phi_{j+1, j}
	\]
In particular, $\deg \phi_n \neq o(d^n)$ implies there exists $n_0>0$ so that $\deg \phi_{n+1, n} = d$ for all $n \geq n_0$. For the remainder of the proof, we will operate under this assumption.

Suppose for the moment that there exist nonnegative integers $m > n \geq n_0$ such that 
\be \label{preperiodic orbit}
	A_{n, k} \circ A_{m,k}^{-1} \to L \in \Rat_1 \quad \mbox{ as } k\to\infty
\ee
(after passing to a subsequence, if necessary).  From Lemma \ref{composition} and the continuity of composition,
	$$A_{n,k} \circ f_k^{m-n} \circ A_{n,k}^{-1} = A_{n,k}\circ A_{m,k}^{-1} \circ A_{m,k} \circ f_k^{m-n} \circ A_{n,k}^{-1} \longrightarrow 
  L \circ \phi_{m,m-1} \circ \cdots \circ \phi_{n+1,n},$$
and the limiting function has degree $d^{m-n}$.  In other words, the sequence of conjugates $A_{n,k} \circ f_k^{m-n} \circ A_{n,k}^{-1}$ will converge in $\Rat_{d^{m-n}}$.  But properness of the iteration map $\Rat_d \to \Rat_{d^{m-n}}$ \cite[Corollary 0.3]{DeMarco_Boundary_Maps_2005} implies that the sequence $A_{n,k} \circ f_k \circ A_{n,k}^{-1}$ must also converge uniformly to some rational function $g \in \Rat_d$.  The continuity of measures within $\Rat_d$ then implies that $\mu = \lim_{k\to\infty} (A_{n,k}^{-1})_* \mu_g$.  The sequence $\{A_{n,k}\}$ must diverge in $\Rat_1$ (because the sequence $\{f_k\}$ diverges in $\Rat_d$), so the limiting measure $\mu$ will be concentrated at a single point.

It remains to treat the case where 
		$$A_{m, k} \circ A_{n,k}^{-1}$$
diverges in $\Rat_1$ for all $m > n \geq n_0$. A diagonalization argument allows us to assume that the limit exists in $\Ratbar_1$, and we set 
	$$a_{m,n} := \lim_{k\to\infty}  A_{m,k} \circ A_{n,k}^{-1}(p)$$
for all but one point $p$ in $\P^1$, say $p = h_{m,n}$. Recall that we continue to assume that $\deg \phi_n = o(d^n)$ as $n\to\infty$, so there is a constant $0 < \kappa < 1$ such that $\deg \phi_n = \kappa d^n$ for all $n \geq n_0$. We wish to show that $\mu = \lim \mu_k$ is purely atomic. For the sake of a contradiction, we suppose otherwise and write 
$$\mu = \nu + \tilde \nu,$$
where $\tilde \nu$ is a countable sum of atoms and $\nu = \mu - \tilde \nu$ is a nonzero positive measure with no atoms. Similarly, write $\mu_{E_n} = \nu^n + \tilde \nu^n$, where $\nu^n$ and $\tilde \nu^n$ are the ``diffuse part'' and the ``atomic part'' of $\mu_{E_n}$, respectively. Applying Theorem~\ref{paired pullback formula} to the $n$th iterates $f_k^n$ and comparing diffuse parts, we find that 
	\[
		\nu = \frac{1}{d^n}\phi_n^*\nu^n \quad \Rightarrow \quad  0 < \nu(\PP^1) = \frac{\deg \phi_n}{d^n} \nu^n(\PP^1) = \kappa \cdot \nu^n(\PP^1)
	\]
for all $n \geq n_0$. Hence, there exists $N$ so that 
	$$\sum_{n = n_0}^N \nu^n(\P^1) \geq 2,$$

	Fix a small $\eps > 0$. For each pair $n_0 \leq m,n \leq N$ with $m \neq n$, choose small pairwise disjoint closed disks $D_{m,n}$ and $D'_{m,n}$ around $a_{m,n}$ and $h_{m,n}$, respectively. Let $U$ be the complement of all of these disks in $\PP^1$. Since $\nu^n$ is atomless, by shrinking $D_{m,n}$ and $D'_{m,n}$ as needed we may assume that
	\[
		\nu^n(U) > \nu^n(\PP^1) - \frac{\eps}{2^n} \qquad (n_0 \leq n \leq N).
	\]
Weak convergence of measures $(A_{n,k})_* \mu_k \to \mu_{E_n} = \nu^n + \tilde{\nu}^n$ implies that 
	$$(A_{n,k})_*\mu_k (U) > \nu^n(\P^1) -  \frac{\eps}{2^n}$$
for all sufficiently large $k$ and all $n_0 \leq n \leq N$.  (Restricting to finitely many $n$ allows us to do this uniformly.)

	For distinct indices $n_0 \leq m, n \leq N$, we have constructed $U$ to be disjoint from $D_{m,n}'$. It follows that $A_{m,k} \circ A_{n,k}^{-1}(U) \subset D_{m,n}$ for all $k \gg 0$, and hence $U \cap (A_{m,k} \circ A_{n,k}^{-1}(U)) = \varnothing$ for all sufficiently large $k$. Therefore, the sets
		\[
		A_{n_0,k}^{-1}(U), \, A_{n_0+1,k}^{-1}(U), \, \ldots \, , \, A_{N,k}^{-1}(U)	
		\]
are pairwise disjoint for all $k \gg 0$. (Again, restricting to finitely many sets allows us to do this uniformly.) But then 
	$$\mu_k (\P^1) \geq \sum_{n=n_0}^N \mu_k\left( A_{n,k}^{-1}(U)\right) 
		> \sum_{n=n_0}^N \left(\nu^n(\P^1) - \eps/2^n\right) > 2 - \eps > 1,$$
contradicting the fact that $\mu_k$ is a probability measure. This completes the proof of Theorem~A.

\begin{remark}
In the case where the sequence $f_k$ lies in a meromorphic family $f_t$, the condition that $\deg \phi_n \not= o(d^n)$ is characterized in the proof of Proposition \ref{Prop: surplus equidistribution}(2), in terms of dynamics on the Berkovich $\Berk$.
\end{remark}


\bigskip
\section{1-parameter families and complex surfaces}
\label{surface def}

	In this section, we carry out Step 1 in the proof of Theorem~B. To start, we consider a meromorphic family $\{f_t \ : \ t \in \DD\}$ of rational functions of degree $d \geq 2$ and set up a geometric framework in which to talk about pullback of measures when $t=0$. Under the hypothesis of Theorem~B, the family $f_t$ defines a holomorphic disk in $\overline{\Rat}_d$ with $f_0\in \del\Rat_d$.  It is convenient to package the given 1-parameter family into one map on the complex surface $X = \D\times\P^1$, as
	$$F:  X \dashrightarrow X,$$
defined by $F(t,x) = (t,f_t(x))$ for $t\not=0$.  The map $F$ extends to a meromorphic map on the surface $X$ with a finite set of indeterminacy points in the central fiber $X_0 := \{0\}\times \P^1$.  The indeterminacy points coincide with roots of the polynomial $H_{f_0}$ defined in \S\ref{space}.  On any compact subset of $\P^1\setminus \{H_{f_0} = 0\}$, the functions $f_t$ converge uniformly to the reduction  $\phi_{f_0}$ as $t\to 0$. 

\subsection{The modified surface $Y$}
There is a unique (up to isomorphism) minimal modification $\pi: Y \to X$, so that the induced rational map 
	$$F: X \dashrightarrow Y$$
is nonconstant on $X_0$.  In other words, either $Y=X$, or we can blow up the image surface $X$ at a unique point of $X_0$ so that $F$ has no exceptional curve.  The resulting surface $Y$ may be singular.  In coordinates, the existence and uniqueness of $Y$ is immediate from Lemma~\ref{nontrivial limit}: If $(t,z)$ are local coordinates on $X$, then $(t,w)$ are local coordinates on $Y$, where $z = A_t(w)$. Moreover, we see that the central fiber $Y_0$ is reduced. 

\begin{figure} [ht]
\includegraphics[width=5in]{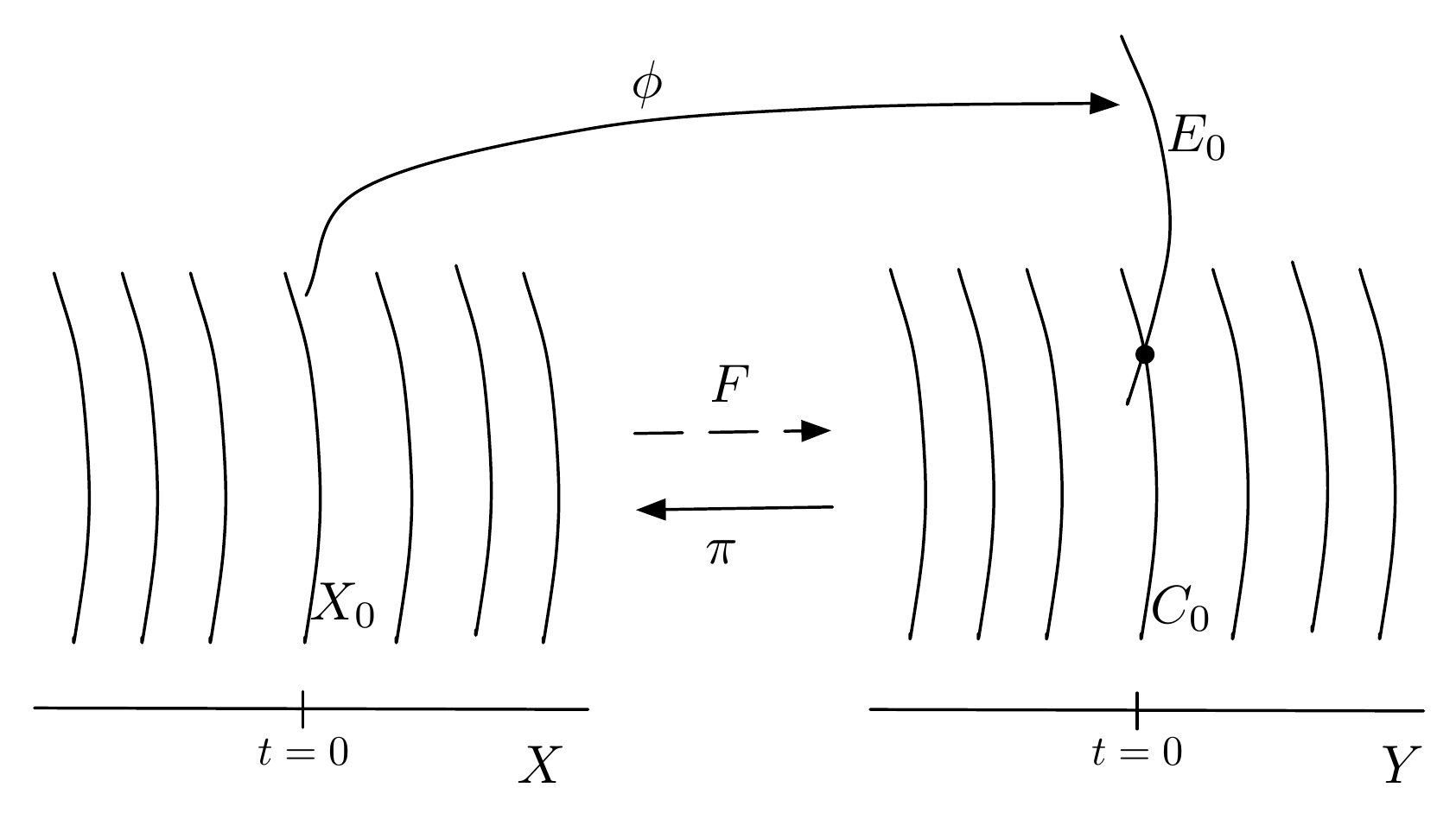}
\caption{The surface map $F: X\dashrightarrow Y$ when the given reduction $\phi_{f_0}$ is constant. }
\label{surface pic}
\end{figure}

We fix a family $A_t$ of M\"obius transformations --- as guaranteed by Lemma~\ref{nontrivial limit} --- such that $A_t \circ f_t$ converges to a point $\Phi \in \Ratbar_d$ with gcd $H$ and reduction $\phi$ of degree $>0$.   If the reduction of $f_0$ is nonconstant, we let $A_t$ be the identity for all $t$.  Away from its points of indeterminacy, the rational map $F:  X\dashrightarrow Y$ coincides with $\phi$ along the central fiber $X_0$.  The central fiber $Y_0$ of $Y$ has at most two irreducible components.  If $Y\not= X$, we let $E_0$ denote the exceptional curve of the projection $\pi$ and let $C_0$ be the other component of $Y_0$; see Figure~\ref{surface pic}.

\subsection{Pullback of measures from $Y_0$ to $X_0$}  \label{surface pullback}
For any Borel probability measure $\mu$ on the central fiber $Y_0$ of $Y$, we can define a measure $F^*\mu$ on the central fiber $X_0$ of $X$ of total mass~$d$.  We use the language of paired measures and their pullbacks defined in \S\ref{Sec: Paired measures}.  If $Y=X$, we simply set 
\be  \label{pullback X}
	F^*\mu := \Phi^* (\mu, \mu) = \, \phi^*\mu + \sum_{x\in\P^1} s(x) \delta_x
\ee
where $(\mu, \mu)$ is a paired measure on two copies of $Y_0 = X_0$.  In case $Y \not=X$, recall that the projection $\pi: Y\to X$ collapses $E_0$ to a point.  There is also a continuous projection $\pi_E: Y_0 \to E_0$ that collapses $C_0$ to a point.  We define, 
\be \label{pullback Y}
	F^*\mu := \Phi^* (\pi_* \mu, (\pi_E)_* \mu) =  \, \phi^*\, (\pi_E)_* \mu \, + \sum_{x\in\P^1} s(x) \delta_x .
\ee

Now suppose that $\mu_t$ is a family of probability measures on the fibers $Y_t$ on the surface $Y$.  We say $\mu_0$ on $Y_0$ is a weak limit of the measures $\mu_t$ if there is a sequence $t_n \to 0$ so that  
	$$\int_{Y_{t_n}} \psi \, \mu_{t_n} \to \int_{Y_0} \psi \, \mu_0$$
for every continuous function $\psi$ on $Y$.  If $Y=X = \D\times\P^1$, this notion of weak limit agrees with the usual notion for measures on a single $\P^1$. In case $Y\not=X$, it is not hard to see that this notion of convergence coincides with $\{A_{t_n}\}$-weak convergence of $\mu_{t_n}$ to the paired measure $(\pi_* \mu_0, (\pi_E)_*\mu_0)$ on $C_0 \cup E_0 = Y_0$.  

	We already know that weak limits of maximal measures satisfy a paired measure pullback formula (Theorem~\ref{paired pullback formula}). Translating into our surface framework, we immediately obtain the main result of this section:

\begin{thm}  \label{complex pullback}
Any weak limit $\mu_0$ of the maximal measures $\mu_t$ on the central fiber $Y_0$ of $Y$ will satisfy the pullback formula 
	$$\frac{1}{d} F^*\mu_0 = \pi_* \mu_0$$
on the central fiber $X_0$ of $X$.  
\end{thm}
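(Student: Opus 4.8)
The plan is to deduce Theorem~\ref{complex pullback} directly from the paired pullback formula of Theorem~\ref{paired pullback formula}, by unwinding the dictionary between measures on the central fiber $Y_0$ and paired measures on $C_0\cup E_0$ set up in \S\ref{surface pullback}. The argument splits into the two cases that define $Y$. First, suppose $Y=X$, which happens precisely when the reduction $\phi_{f_0}$ is nonconstant; then $\pi=\mathrm{Id}$, we take $A_t\equiv\mathrm{Id}$, and $\Phi=f_0$. A weak limit $\mu_0$ of the maximal measures on $Y_0=X_0=\P^1$ is just an ordinary weak limit along some sequence $t_n\to0$, and, as observed just before Theorem~\ref{paired pullback formula}, it is exactly the $\{A_{t_n}\}$-weak limit $(\mu_C,\mu_E)=(\mu_0,\mu_0)$. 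Applying Theorem~\ref{paired pullback formula} to the sequence $f_{t_n}\in\Rat_d$ converging to $f_0\in\del\Rat_d$ gives $\frac1d\,\Phi^*(\mu_0,\mu_0)=\mu_0$; since $F^*\mu_0=\Phi^*(\mu_0,\mu_0)$ by \eqref{pullback X} and $\pi_*\mu_0=\mu_0$, this is precisely the claimed identity.

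Next, suppose $Y\neq X$, so that $\phi_{f_0}$ is constant and $F$ collapses $X_0$. Fix the meromorphic family $A_t$ from Lemma~\ref{nontrivial limit} with $A_t\circ f_t\to\Phi$ and $\deg\phi>0$, and write $Y_0=C_0\cup E_0$. Given a weak limit $\mu_0$ of the maximal measures on $Y_0$ along a sequence $t_n\to0$, the dictionary in \S\ref{surface pullback} identifies it with the $\{A_{t_n}\}$-weak limit $(\mu_C,\mu_E)=(\pi_*\mu_0,(\pi_E)_*\mu_0)$ of the sequence of maximal measures $\mu_{f_{t_n}}$. Theorem~\ref{paired pullback formula}, applied to this sequence, then yields $\frac1d\,\Phi^*(\pi_*\mu_0,(\pi_E)_*\mu_0)=\pi_*\mu_0$; by \eqref{pullback Y} the left-hand side is $\frac1d F^*\mu_0$, which completes the proof.

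The proof is thus essentially a matter of assembling results already in place, so there is no real obstacle beyond bookkeeping; the points that need checking are that Theorem~\ref{paired pullback formula} genuinely applies and that the surface-theoretic notions translate correctly. For the former, one must note that $A_{t_n}$ is a legitimate choice of the Möbius sequence attached to $f_{t_n}$: this is immediate because $A_t$ is meromorphic in $t$ and $A_t\circ f_t\to\Phi$, so any subsequence converges to the same $\Phi$ with reduction $\phi$ of positive degree. For the latter, one uses that $F^*$ is \emph{defined} via $\Phi^*$ of the relevant paired measure in \eqref{pullback X} and \eqref{pullback Y}, and that "weak limit on $Y_0$" was shown in \S\ref{surface pullback} to coincide with $\{A_{t_n}\}$-weak convergence to $(\pi_*\mu_0,(\pi_E)_*\mu_0)$; both facts are recorded earlier, so the present argument only has to invoke them.
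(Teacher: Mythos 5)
Your argument is correct and takes essentially the same route as the paper: the authors also reduce Theorem~\ref{complex pullback} to Theorem~\ref{paired pullback formula} by using the identification, recorded in \S\ref{surface pullback}, of weak limits on $Y_0$ with $\{A_{t_n}\}$-weak limits of paired measures $(\pi_*\mu_0,(\pi_E)_*\mu_0)$, splitting into the two cases $Y=X$ and $Y\neq X$. Your version is if anything slightly more explicit than the paper's one-line deduction, and the bookkeeping in both cases checks out.
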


\bigskip
\section{Dynamics and $\Gamma$-measures on the Berkovich projective line}
\label{Sec: Measures}

	In this section, we quantize a dynamical system $f$ on the Berkovich projective line and describe the solutions to a system of pullback formulas, thereby completing Step~2 of our program outlined in the introduction. Throughout, we let $k$ be an algebraically closed field of characteristic zero that is complete with respect to a nontrivial non-Archimedean absolute value. Only the case where $k$ has residue characteristic zero is necessary for our application; however, with essentially no extra work, we obtain a more general result. The Berkovich projective line over $k$ will be denoted $\Berk$ for brevity.

\subsection{Vertex sets and measures}
\label{Sec: Gamma}

	A \textbf{vertex set} for $\Berk$ is a finite nonempty set of type~II points, which we denote by $\Gamma$.  The connected components of $\Berk \smallsetminus \Gamma$ will be referred to as \textbf{$\Gamma$-domains}. When a $\Gamma$-domain has only one boundary point, we call it a \textbf{$\Gamma$-disk}. Write $\SS(\Gamma)$ for the partition of $\Berk$ consisting of the elements of $\Gamma$ and all of its $\Gamma$-domains. 
	
	Let $(\Berk, \Gamma)$ be the measurable space structure on $\Berk$ equipped with the $\sigma$-algebra generated by $\SS(\Gamma)$. A measurable function on $(\Berk, \Gamma)$ will be called \textbf{$\Gamma$-measurable}. The space of complex measures on $(\Berk, \Gamma)$ will be denoted $M(\Gamma)$, and we call any such measure a \textbf{$\Gamma$-measure}. We write $M^\ell(\Gamma)$ for the convex subspace of $M(\Gamma)$ consisting of positive measures of volume~$\ell$. 
	
\begin{remark}
	A function $\phi: \Berk \to \CC$ is $\Gamma$-measurable if and only if it is constant on subsets of $\SS(\Gamma)$. 
\end{remark}

Suppose that $\Gamma \subset \Gamma'$ are two vertex sets.  If we write $\pi: \Berk \to \Berk$ for the identity morphism, then $\pi: (\Berk, \Gamma') \to (\Berk, \Gamma)$ is a measurable morphism. In particular, the projection
\begin{equation*}  \label{pi_*}
	\pi_* : M(\Gamma') \to M(\Gamma)
\end{equation*}
is $\CC$-linear and preserves positivity and volume of measures.


\subsection{Pulling back measures by a rational function}
\label{Sec: pullback}
Throughout this section we assume that $f: \Berk \to \Berk$ is a rational function of degree $d \geq 2$.  Suppose that $\Gamma = \{\zeta\}$ is a singleton vertex set, and let $\Gamma' = \{\zeta, f(\zeta)\}$ be a second vertex set.  For the applications in this article, we will only need to consider vertex sets of cardinality 1 or 2.

	Now we define a pullback map $f^*: M(\Gamma') \to M(\Gamma)$. As a first step, we define certain multiplicities $m_{U,V} \in \{0, 1, \ldots, d\}$ for each $U \in \SS(\Gamma')$ and $V \in \SS(\Gamma)$. If $V = \{\zeta\}$, set $m_{U,V} = m_f(\zeta)$, the usual local degree of $f$ at $\zeta$. For a $\Gamma$-disk $V$, we may write $V = D(\vec{v})$ for some tangent vector $\vec{v} \in T\Berk_{\zeta}$. Set $\bar f (V) = D(Tf(\vec{v}))$. Write $m_f(V)$ and $s_f(V)$ for the directional and surplus multiplicities for $f$ associated to $V$. (See \cite[\S3]{Faber_Berk_RamI_2013}.) By definition, we have 
	\[
		\#\left(f^{-1}(y) \cap V \right) = \begin{cases}
				m_f(V) + s_f(V) & \text{if } y \in \bar f (V) \\
				s_f(V) & \text{if } y \not\in \bar f(V).
			\end{cases}
	\]
Here we count each pre-image $x$ with multiplicity $m_f(x)$. Since $\bar f(V)$ is a union of elements of $\SS(\Gamma')$, the function $y \mapsto \#\left(f^{-1}(y) \cap V\right)$ is constant on elements of $\SS(\Gamma')$. For each $U \in \SS(\Gamma')$, define $m_{U,V}$ to be this constant value. The following lemma gives a compatibility relation among the multiplicities~$m_{U,V}$. 
	
\begin{lem}
\label{Lem: Consistent Multiplicities1}
	For each $U \in \SS(\Gamma')$, we have
		\[
			\sum_{V \in \SS(\Gamma)} m_{U,V} = \deg(f). 
		\]
\end{lem}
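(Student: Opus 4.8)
The plan is to fix $U \in \SS(\Gamma')$ and evaluate the sum $\sum_{V} m_{U,V}$ by choosing a convenient point $y \in U$ and counting $f^{-1}(y)$ in two ways. First I would pick $y$ to be a point of $\Berk$ lying in the interior of $U$ (this is possible unless $U = \{f(\zeta)\}$, which I treat separately below); then by the defining property of $m_{U,V}$ we have $m_{U,V} = \#\bigl(f^{-1}(y) \cap V\bigr)$ for every $V \in \SS(\Gamma)$, where pre-images are counted with the local-degree multiplicity $m_f(x)$. Since $\SS(\Gamma)$ is a partition of $\Berk$ (its elements are $\zeta$ together with the $\Gamma$-domains, which cover $\Berk \setminus \{\zeta\}$), summing over $V$ simply reassembles the full fiber:
\[
	\sum_{V \in \SS(\Gamma)} m_{U,V} = \sum_{V \in \SS(\Gamma)} \#\bigl(f^{-1}(y) \cap V\bigr) = \#\, f^{-1}(y),
\]
and the right-hand side equals $\deg(f) = d$ because $f: \Berk \to \Berk$ is a degree-$d$ rational map and every point of $\Berk$ has exactly $d$ pre-images counted with multiplicity (this is standard; see \cite{Baker-Rumely_BerkBook_2010} or \cite{Faber_Berk_RamI_2013}).

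The one case requiring separate handling is $U = \{f(\zeta)\}$, i.e. the singleton element of $\SS(\Gamma')$. Here I cannot pick an interior point of $U$ distinct from $f(\zeta)$, so instead I use $y = f(\zeta)$ directly: for $V = \{\zeta\}$ the multiplicity $m_{U,V}$ was defined to be $m_f(\zeta)$, which counts the contribution of $\zeta$ itself to $f^{-1}(f(\zeta))$; and for a $\Gamma$-disk $V = D(\vec v)$ the value $m_{U,V} = \#\bigl(f^{-1}(f(\zeta)) \cap V\bigr)$, since $f(\zeta) \in \bar f(V)$ precisely when $V$ points toward $f(\zeta)$ under $Tf$, which is exactly the condition built into the case distinction for $\#(f^{-1}(y) \cap V)$. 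So again $\sum_V m_{U,V}$ reassembles $f^{-1}(f(\zeta))$ with multiplicities and equals $d$. (Alternatively, one observes that $m_{U,V}$ is by construction locally constant on $\SS(\Gamma')$, so the value on $\{f(\zeta)\}$ agrees with the value on any $\Gamma'$-domain whose closure contains $f(\zeta)$, reducing to the generic case.)

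The only genuine subtlety — and the step I would be most careful about — is confirming that the total pre-image count $\#\, f^{-1}(y)$, taken over all of $\Berk$ and with the correct Berkovich local multiplicities $m_f(x)$, is exactly $d$ for every $y \in \Berk$, including type II/III/IV points and including the case where some pre-images are "hidden" in surplus directions. This is the content of the surplus/directional multiplicity bookkeeping in \cite[\S3]{Faber_Berk_RamI_2013}, and the displayed formula for $\#(f^{-1}(y) \cap V)$ quoted just before the lemma is precisely what packages it correctly; so the proof really is just: partition $\Berk$ by $\SS(\Gamma)$, sum the fiber counts, invoke that the degree is $d$. Everything else is formal.
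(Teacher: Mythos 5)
Your proof is correct and follows the same route as the paper's: choose $y \in U$, use the defining property $m_{U,V} = \#\bigl(f^{-1}(y) \cap V\bigr)$, sum over the partition $\SS(\Gamma)$ to reassemble the full fiber, and invoke that $f$ is everywhere $d$-to-$1$ on $\Berk$. The separate treatment of $U = \{f(\zeta)\}$ is unnecessary: the paper simply takes any $y \in U$, and nothing in the argument requires $y$ to lie in the ``interior'' of $U$ --- taking $y = f(\zeta)$ is perfectly fine, and the same summation gives $\#\,f^{-1}(f(\zeta)) = d$. Two of your side remarks in that case are slightly off, though they do not affect the main conclusion. First, since $\bar f(V) = D(Tf(\vec v))$ is by definition a connected component of $\Berk \smallsetminus \{f(\zeta)\}$, one always has $f(\zeta) \notin \bar f(V)$; so for $y = f(\zeta)$ every $\Gamma$-disk $V$ falls in the ``else'' branch of the case distinction and contributes $s_f(V)$, and the sum is $m_f(\zeta) + \sum_V s_f(V) = d$, not because ``$V$ points toward $f(\zeta)$'' (it never does in the relevant sense). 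Second, the ``alternative observation'' that $m_{U,V}$ is locally constant across $\SS(\Gamma')$ is false: $m_{\{f(\zeta)\},V} = s_f(V)$ while for the $\Gamma'$-domain $U'$ adjacent to $f(\zeta)$ inside $\bar f(V)$ one has $m_{U',V} = m_f(V) + s_f(V)$, and $m_f(V) \geq 1$. Your primary argument --- partition, reassemble, count --- is exactly the paper's proof and is sound.
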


\begin{proof}
	Choose a point $y \in U$. For each $V \in \SS(\Gamma)$, we have that $m_{U,V} = \#\left(f^{-1}(y) \cap V\right)$. Since $f$ is everywhere $\deg(f)$--to--1, the result follows. 
\end{proof}		
	
	For a measurable function $\phi: (\Berk, \Gamma) \to \CC$, we define a $\Gamma'$-measurable function $f_*\phi$ by
	\begin{equation*}
	\label{Eq: pushforward2}
		f_*\phi(U) = \sum_{W \in \SS(\Gamma)} m_{U,W} \cdot \phi(W) \qquad (U \in \SS(\Gamma')).
	\end{equation*}
Here we have abused notation by writing $f_*\phi(U)$ for the constant value of $f_*\phi$ on $U$, and similarly for $\phi(W)$. Note that the sum defining $f_*\phi(U)$ is finite by Lemma~\ref{Lem: Consistent Multiplicities1}. 

	If $\phi$ is a bounded $\Gamma$-measurable function, then $\|f_*\phi\| \leq d \|\phi\|$, where we have written $\|\cdot\|$ for the sup norm.  For each $\nu \in M(\Gamma')$, the linear functional $\phi \mapsto \int f_*\phi \ \nu$ is bounded, and by duality there exists a $\Gamma$-measure $f^*\nu$ satisfying $\int \phi \ f^* \nu = \int f_*\phi \ \nu$ for all bounded $\Gamma$-measurable functions $\phi$. Evidently $f^*: M(\Gamma') \to M(\Gamma)$ preserves positivity of measures, and Lemma~\ref{Lem: Consistent Multiplicities1} shows that $f^*$ carries $M^\ell(\Gamma')$ into $M^{\ell d}(\Gamma)$ for each $\ell \in \CC$. In particular, $\frac{1}{d}f^*$ maps probability measures to probability measures.



\subsection{The equilibrium and exceptional $\Gamma$-measures}
\label{Sec: distinguished}
For a given rational function $f: \Berk \to \Berk$ of degree $d \geq 2$ and $\Gamma = \{\zeta\}$, there are two distinguished $\Gamma$-measures that will play a key role in our theory. 
	
	Write $\mu_f$ for the equilibrium measure on $\Berk$ relative to $f$ \cite{Favre_Rivera-Letelier_Ergodic_2010}. (Another common name in the literature is ``canonical measure'' \cite[\S10]{Baker-Rumely_BerkBook_2010}.) It is the unique Borel probability measure $\nu$ that satisfies $f^*\nu = d \cdot \nu$ and that does not charge classical points of $\Berk$ \cite[Thm.~A]{Favre_Rivera-Letelier_Ergodic_2010}. Here $f^*$ is the usual pullback operator for Borel measures on $\Berk$ --- not the one defined in \S\ref{Sec: pullback}. For a vertex set $\Gamma$, we define the \textbf{equilibrium $\Gamma$-measure} $\omega_{f,\Gamma}$ by the formula 
	$$\omega_{f, \Gamma}(U) := \mu_f(U)$$ 
for each $U \in \SS(\Gamma)$. Note that it is supported on a countable subset of $\SS(\Gamma)$. 

\begin{lem}
\label{Lem: canonical measure}
	Let $f: \Berk \to \Berk$ be a rational function of degree $d \geq 2$, let $\Gamma = \{\zeta\}$ be a singleton vertex set, let $\Gamma' = \{\zeta, f(\zeta)\}$, and let $\pi_*$ and $f^*$ be the operators defined in the previous section. Then 
	$
		\pi_* \omega_{f, \Gamma'} = \omega_{f, \Gamma}$ and $
		f^* \omega_{f, \Gamma'}  = d \cdot \pi_* \omega_{f, \Gamma'}.
	$  
\end{lem}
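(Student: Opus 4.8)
The plan is to verify both identities by computing the $\Gamma$-measures involved on each element of the partition $\SS(\Gamma)$, which suffices since all measures here have finite volume and are supported on countably many elements of the relevant partition. Recall $\Gamma = \{\zeta\}$ and $\Gamma' = \{\zeta, f(\zeta)\}$, so $\SS(\Gamma')$ refines $\SS(\Gamma)$: it subdivides only the unique $\Gamma$-disk $V_0$ containing the type~II point $f(\zeta)$ (if $f(\zeta)=\zeta$ the lemma is trivial), replacing it by $\{f(\zeta)\}$, the annulus with boundary $\{\zeta, f(\zeta)\}$, and the $\Gamma'$-disks based at $f(\zeta)$ that lie in $V_0$; every other element of $\SS(\Gamma)$ is already an element of $\SS(\Gamma')$.

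The identity $\pi_*\omega_{f,\Gamma'} = \omega_{f,\Gamma}$ is essentially formal: $\omega_{f,\Gamma'}$ is the restriction of the Borel measure $\mu_f$ to the $\sigma$-algebra generated by $\SS(\Gamma')$, and $\pi_*$ restricts further to the one generated by $\SS(\Gamma)$, so the composition is just the restriction to $\SS(\Gamma)$, which is $\omega_{f,\Gamma}$ by definition. Concretely, for $V \in \SS(\Gamma)$ with $V \neq V_0$ one has $\pi_*\omega_{f,\Gamma'}(V) = \omega_{f,\Gamma'}(V) = \mu_f(V) = \omega_{f,\Gamma}(V)$ at once, and for $V = V_0$ one sums over the countably many non-null pieces of $\SS(\Gamma')$ inside $V_0$ and invokes countable additivity of $\mu_f$. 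In particular this reduces the second identity to $f^*\omega_{f,\Gamma'} = d\cdot\omega_{f,\Gamma}$.

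For the second identity, fix $V \in \SS(\Gamma)$ and test $f^*\omega_{f,\Gamma'}$ against the $\Gamma$-measurable indicator $\mathbf{1}_V$. Since $f_*\mathbf{1}_V(U) = \sum_{W \in \SS(\Gamma)} m_{U,W}\,\mathbf{1}_V(W) = m_{U,V}$, the duality defining $f^*$ gives $f^*\omega_{f,\Gamma'}(V) = \sum_{U \in \SS(\Gamma')} m_{U,V}\,\mu_f(U)$. The point is that $m_{U,V}$ is precisely the number $\#(f^{-1}(y) \cap V)$ of pre-images in $V$ of any $y \in U$, counted with multiplicity $m_f$ (cf. the proof of Lemma~\ref{Lem: Consistent Multiplicities1}), and that this is a Borel function of $y$ constant on each element of $\SS(\Gamma')$. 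Hence
\[
	f^*\omega_{f,\Gamma'}(V) = \int_{\Berk} \#\!\left( f^{-1}(y) \cap V \right) d\mu_f(y) = \int_{\Berk} (f_*\mathbf{1}_V)\, d\mu_f = (f^*\mu_f)(V),
\]
where on the right $f^*$ now denotes the \emph{classical} pullback of Borel measures on $\Berk$. Since $\mu_f$ is characterized by $f^*\mu_f = d\cdot\mu_f$, we conclude $f^*\omega_{f,\Gamma'}(V) = d\,\mu_f(V) = d\,\omega_{f,\Gamma}(V)$ for all $V$, i.e. $f^*\omega_{f,\Gamma'} = d\,\omega_{f,\Gamma} = d\,\pi_*\omega_{f,\Gamma'}$.

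The only step that needs a word of care is identifying $\int_{\Berk}\#(f^{-1}(y)\cap V)\,d\mu_f(y)$ with $(f^*\mu_f)(V)$ when the classical operator is taken to be defined by duality against continuous functions: one approximates $\mathbf{1}_V$ from below by continuous functions supported in $V$ and passes to the limit by monotone convergence, using that $f_*$ commutes with such increasing limits. I do not anticipate a real obstacle here — the substance of the proof is the two structural facts $m_{U,V} = \#(f^{-1}(y)\cap V)$ and $f^*\mu_f = d\mu_f$, everything else being routine manipulation of the refinement $\SS(\Gamma) \subseteq \SS(\Gamma')$ together with countable additivity of $\mu_f$.
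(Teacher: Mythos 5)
Your proof is correct and takes essentially the same route as the paper's: the paper tests against a general $\Gamma$-measurable function $\phi$ and observes that the $\Gamma$-pushforward $f_*\phi$ coincides with the Borel pushforward $y \mapsto \sum_{f(x)=y} m_f(x)\phi(x)$, then chains $\int \phi\, f^*\omega_{f,\Gamma'} = \int f_*\phi\, \mu_f = \int\phi\, f^*\mu_f = d\int\phi\,\mu_f$; you specialize $\phi = \mathbf{1}_V$, which is an equivalent computation since both sides are determined on $\SS(\Gamma)$. One small slip: the parenthetical "if $f(\zeta)=\zeta$ the lemma is trivial" is not quite right — in that case $\Gamma=\Gamma'$ and $\pi=\mathrm{id}$, so the first identity is trivial, but the second identity $f^*\omega_{f,\Gamma} = d\cdot\omega_{f,\Gamma}$ still requires the argument you give (which, as written, does cover that case).
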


\begin{proof}
	The statement about $\pi_*$ is immediate from the definitions.
	
	Let $\phi: \Berk \to \CC$ be a $\Gamma$-measurable function. It is also Borel measurable on $\Berk$ since each element of $\SS(\Gamma)$ is either an open set or a point. The definitions of the multiplicities $m_{U,V}$ show that 
	\[
		f_*\phi(y) = \sum_{f(x) = y} m_f(x) \phi(x) \qquad (y \in \Berk),
	\]
which agrees with the formula for the pushforward of Borel measurable functions. 
Since $f^* \mu_f = d \cdot \mu_f$ as Borel measures on $\Berk$, we find that
	\[
		\int \phi \ f^* \omega_{f,\Gamma'} = \int f_* \phi \ \omega_{f,\Gamma'} = \int f_*\phi \ \mu_f = \int \phi \ f^*\mu_f = d \int \phi \ \mu_f = d \int \phi \ \pi_* \omega_{f,\Gamma'}.
	\]
Hence $f^*\omega_{f,\Gamma'} = d \cdot \pi_* \omega_{f,\Gamma'}$ as elements of $M(\Gamma)$. 
\end{proof}

Suppose now that the rational function $f: \Berk \to \Berk$ has an exceptional orbit $\EE$.  The \textbf{exceptional $\Gamma$-measure} associated to the orbit $\EE$ is defined to be the probability measure $\delta_\EE \in M(\Gamma)$ given by
	\[
		\delta_\EE(U) = \frac{ \#\left(\EE \cap U\right)}{\# \EE} .
	\]

\begin{remark}
Recall that an exceptional orbit $\EE$ is finite and $f^{-1}(\EE) = \EE$.  Since $k$ has characteristic zero, the function $f$ admits at most two classical exceptional points and at most one exceptional point in $\Berk \smallsetminus \PP^1(k)$ (necessarily of type~II). 
\end{remark}

\begin{lem}
\label{Lem: exceptional solutions}
	Let $f: \Berk \to \Berk$ be a rational function of degree $d \geq 2$, let $\Gamma = \{\zeta\}$ be a singleton vertex set, and let $\Gamma' = \{\zeta, f(\zeta)\}$. Suppose that $\mathcal{E}$ is an exceptional orbit for $f$. Write $\delta_\EE$ and $\delta_{\EE}'$ for the associated probability measures with respect to $\Gamma$ and $\Gamma'$, respectively. Then $\pi_* \delta_\EE' = \delta_\EE$ and $f^* \delta_{\EE}'  = d \cdot \pi_* \delta_{\EE}'$. 
\end{lem}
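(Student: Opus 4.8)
The plan is to mirror the proof of Lemma~\ref{Lem: canonical measure}, replacing the equilibrium measure $\mu_f$ by the atomic probability measure $\delta_\EE^{\mathrm{Borel}} := \frac{1}{\#\EE}\sum_{e \in \EE} \delta_e$ on $\Berk$ and tracking how the combinatorial pullback $f^*$ of \S\ref{Sec: pullback} interacts with it. The first assertion, $\pi_*\delta_\EE' = \delta_\EE$, is purely formal: for any $V \in \SS(\Gamma)$ we have $\pi_*\delta_\EE'(V) = \delta_\EE'(V) = \#(\EE \cap V)/\#\EE = \delta_\EE(V)$, since $V$ is a disjoint union of elements of $\SS(\Gamma')$ and $\delta_\EE'$ is additive on $\SS(\Gamma')$. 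So the content is the pullback identity.

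For the pullback identity, first I would recall from the proof of Lemma~\ref{Lem: canonical measure} that for a $\Gamma$-measurable (hence Borel-measurable) function $\phi$, the $\Gamma'$-pushforward $f_*\phi$ agrees with the usual Borel pushforward $y \mapsto \sum_{f(x)=y} m_f(x)\phi(x)$; this is exactly the statement that $m_{U,V} = \#(f^{-1}(y)\cap V)$ counted with local degree for $y \in U$. Then, for any bounded $\Gamma$-measurable $\phi$,
\[
	\int \phi \; f^*\delta_\EE' = \int f_*\phi \; \delta_\EE' = \frac{1}{\#\EE}\sum_{e \in \EE} f_*\phi(e) = \frac{1}{\#\EE}\sum_{e \in \EE}\sum_{f(x)=e} m_f(x)\,\phi(x).
\]
Because $\EE$ is exceptional, $f^{-1}(\EE) = \EE$, so every $x$ appearing in the inner sum already lies in $\EE$; moreover, for a fixed $x \in \EE$ the total local degree over its (single) image is accounted for by $\sum_{f(x)=f(x_0)} m_f(x) = d$ when we group the double sum by the point $x$. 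Concretely, reindex the double sum by $x \in \EE$ (each $x$ occurring with coefficient $m_f(x)$) and observe that grouping back by the image $f(x) \in \EE$ and summing local degrees gives $d$ per fiber, so the double sum equals $d \sum_{x \in \EE} \phi(x)$. Hence $\int \phi\, f^*\delta_\EE' = \frac{d}{\#\EE}\sum_{x\in\EE}\phi(x) = d\int \phi \; \delta_\EE = d \int \phi \; \pi_*\delta_\EE'$, using the first assertion. Since this holds for all bounded $\Gamma$-measurable $\phi$, we conclude $f^*\delta_\EE' = d\cdot\pi_*\delta_\EE'$ in $M(\Gamma)$.

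The one point needing care — and the likely main obstacle — is justifying the reindexing of the double sum $\sum_{e\in\EE}\sum_{f(x)=e} m_f(x)\phi(x)$ as $d\sum_{x\in\EE}\phi(x)$: this uses both that $f$ restricted to the finite set $\EE$ is surjective onto $\EE$ with $\sum_{f(x)=e}m_f(x) = d$ for each $e$ (i.e. $f$ is everywhere $d$-to-$1$ counting multiplicity) and that no mass escapes $\EE$ because $f^{-1}(\EE)=\EE$. I would also double-check the degenerate possibility that $\zeta$ or $f(\zeta)$ itself lies in $\EE$ (e.g. a type~II exceptional point), verifying that the definition $m_{U,\{\zeta\}} = m_f(\zeta)$ is consistent with $\#(f^{-1}(y)\cap\{\zeta\})$ for $y \in U$ in that case too — but this is precisely the compatibility already built into the construction of $m_{U,V}$ in \S\ref{Sec: pullback} via Lemma~\ref{Lem: Consistent Multiplicities1}, so no new work is required.
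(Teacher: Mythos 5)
Your proof is correct, and it takes a slightly different route from the paper's. You adapt the test-function (dual) argument of Lemma~\ref{Lem: canonical measure}: you identify $\delta_\EE'$ with the Borel atomic measure $\frac{1}{\#\EE}\sum_{e\in\EE}\delta_e$ when integrated against $\Gamma'$-measurable functions, push forward the test function, and reindex the resulting double sum. The paper instead works directly on sets: for each $U\in\SS(\Gamma)$ it evaluates $f^*\delta_\EE'(U)=\sum_V m_{V,U}\,\#(\EE\cap V)/\#\EE$ and uses the observation that $m_{V,U}\in\{0,d\}$ whenever $V$ meets $\EE$ (together with $\#(\EE\cap U)=\#(\EE\cap f(U))$ by total invariance). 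Both arguments hinge on the same two facts — $f^{-1}(\EE)=\EE$ and each exceptional point has a unique preimage with local degree $d$ — so the difference is mainly in bookkeeping, but your version connects the combinatorial pullback to the Borel picture more transparently and parallels the preceding lemma more closely. One small clarification worth making explicit: the step "the double sum equals $d\sum_{x\in\EE}\phi(x)$" comes from the fact that $f$ restricted to the finite totally invariant set $\EE$ is a bijection, so each fiber $f^{-1}(e)\cap\EE$ is a singleton $\{x\}$ with $m_f(x)=d$; the phrase about "grouping back by the image and summing local degrees gives $d$ per fiber" suggests factoring through constancy of $\phi$ on fibers, which is not what's needed — it's simply that every $m_f(x)$ appearing equals $d$. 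Your final remark about the degenerate case ($\zeta$ itself exceptional, possibly with $\Gamma'=\Gamma$) is a reasonable sanity check, and the definition $m_{U,\{\zeta\}}=m_f(\zeta)$ does indeed make the computation go through there as well.
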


\begin{proof}
	Since exceptional measures count the number of exceptional points, we evidently have $\pi_* \delta_\EE' = \delta_\EE$. For the other equality, let $U \in \SS(\Gamma)$. Then
	\[
		f^*\delta_\EE'(U) 
		= \sum_{\substack{V \in \SS(\Gamma') \\ V \subset f(U)}} m_{V,U} 
			\frac{ \#(\EE \cap V)}{ \# \EE}.
	\]
The quantity $m_{V,U}$ is the constant value of $\#\left(f^{-1}(y) \cap U\right)$ for $y \in V$, counted with multiplicities. In particular, if $c \in \EE \cap V$, then  $m_{V,U} = 0$ or $d$, depending on whether $f^{-1}(c) \cap U$ is empty or not. Note also that $\#(\EE \cap U) = \#(\EE \cap f(U))$, since $\EE$ is a totally invariant set. Hence, 
	\[
		f^*\delta_\EE'(U) = \sum_{\substack{V \in \SS(\Gamma') \\ V \subset f(U)}}
			d \, \frac{\#(\EE \cap V)}{\#\EE}  = d \, \frac{ \#(\EE \cap U)
}{\#\EE}			= d \cdot \pi_* \delta_\EE'(U). \qedhere
	\]
\end{proof}


\subsection{Surplus equidistribution and surplus estimates}

	We now give two technical results that will be used to prove the main result in the next section. The first is of interest in its own right: it describes how surplus multiplicities of disks behave under iteration. The second gives a lower bound for the mass of a $\Gamma$-disk in terms of its surplus multiplicity. 
		
\begin{prop}[Surplus Equidistribution]
\label{Prop: surplus equidistribution}
	Let $f: \Berk \to \Berk$ be a rational function of degree $d \geq 2$ with associated equilibrium measure $\mu_f$. Suppose that the Julia set of $f$ is not equal to $\{\zeta\}$. Let $U$ be an open Berkovich disk with boundary point $\zeta$.  Then exactly one of the following is true:
	\begin{enumerate}
		\item
		\label{Item1} The iterated surplus multiplicities of $U$ satisfy
			\[
				s_{f^n}(U) = \mu_f(U) \cdot d^n + o(d^n).
			\]
		\item
		\label{Item2} The orbit $\OO_f(\zeta)$ converges along the locus of total ramification 
			to a classical exceptional orbit (of length~1 or~2), and 
				\[
					s_{f^n}(U) = 0 \text{ and } \mu_f\left(f^n(U) \right) = 1 \text{ for all $n\geq 1$.} 
				\]
	\end{enumerate}
\end{prop}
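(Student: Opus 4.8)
\emph{Setup and the telescoping identity.} Along the forward orbit write $\zeta_0=\zeta$, $\zeta_{j+1}=f(\zeta_j)$, let $\vec v_j\in T\Berk_{\zeta_j}$ be the $j$-th iterated tangent direction of $\vec v_0$, so that $U_j:=D(\vec v_j)$ has $U_0=U$ and $\overline{f^{\,n}}(U)=U_n$, and set $m_j:=m_f(U_j)$, $s_j:=s_f(U_j)$, and $P_n:=\prod_{j=0}^{n-1}(m_j/d)=m_{f^n}(U)/d^n\in(0,1]$. Combining $f^*\mu_f=d\,\mu_f$ \cite{Favre_Rivera-Letelier_Ergodic_2010} with the preimage count $\#(f^{-1}(y)\cap V)=s_f(V)+m_f(V)\,\mathbf 1_{\bar f(V)}(y)$ and integrating gives, for every Berkovich disk $V$, the relation $d\,\mu_f(V)=s_f(V)+m_f(V)\,\mu_f\big(\bar f(V)\big)$. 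Telescoping along $U_0,\dots,U_{n-1}$ (equivalently, applying this relation to $f^n$, whose equilibrium measure is again $\mu_f$) yields
\[
  \mu_f(U)=\tfrac1d\sum_{j=0}^{n-1}P_j\,s_j+P_n\,\mu_f(U_n),
  \qquad\text{that is,}\qquad
  s_{f^n}(U)=d^n\big(\mu_f(U)-P_n\,\mu_f(U_n)\big).
\]

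\emph{The error term, and the split.} Every summand above is nonnegative and the partial sums are bounded by $\mu_f(U)\le 1$, so $\sum_j P_j s_j$ converges, $P_n\searrow P_\infty\in[0,1]$, and $L:=\lim_n P_n\mu_f(U_n)$ exists with $\mu_f(U)-L=\tfrac1d\sum_{j\ge0}P_j s_j\ge 0$. Hence $s_{f^n}(U)=(\mu_f(U)-L)\,d^n+o(d^n)$. If $L=0$ this is exactly alternative~(1), so from now on assume $L>0$.

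\emph{The case $L>0$.} Since $P_n\searrow P_\infty\ge L>0$ and each ratio $m_j/d$ lies in the finite set $\{1/d,\dots,1\}$, we must have $m_j=d$ for all $j\ge n_1$ and some $n_1$; then also $s_j=0$, $f^{-1}(U_{j+1})=U_j$, and $f^{-1}(\zeta_{j+1})=\{\zeta_j\}$ with multiplicity $d$ (so $\mathrm{mult}_f(\zeta_j)=d$, i.e.\ $\zeta_j$ lies in the locus of total ramification $\mathcal T_f$), while $d\,\mu_f(U_j)=d\,\mu_f(U_{j+1})$ makes $\mu_f(U_j)$ equal to a constant $m=L/P_\infty>0$ for $j\ge n_1$. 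I would then use the structure of $\mathcal T_f$ in residue characteristic~$0$ \cite{Faber_Berk_RamI_2013} together with ergodicity of $\mu_f$ \cite{Favre_Rivera-Letelier_Ergodic_2010}. The tail $(\zeta_j)_{j\ge n_1}$ cannot be eventually periodic: a periodic totally ramified type~II cycle forces an iterate $f^q$ to have good reduction at a $\zeta_j$, hence $J(f)=\{\zeta_j\}$, hence $\mu_f(U_j)=\mu_f(D(\vec v_j))=0$, against $m>0$. So it is an infinite orbit in the finite tree $\mathcal T_f$, which therefore converges monotonically along a terminal segment to a classical point $c$; by continuity $f(c)=c$, and upper semicontinuity of $\mathrm{mult}_f$ gives $\mathrm{mult}_f(c)=d$, so $f^{-1}(c)=\{c\}$ and $c$ is exceptional (an oscillation between two terminal segments gives an exceptional $2$-cycle; replace $f$ by $f^2$). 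Moreover $c\notin U_j$ for large $j$ — the alternative $c\in U_j$ makes $\vec v_j$ point toward $c$, forcing the shrinking disks $U_j$ to satisfy $\mu_f(U_j)\to\mu_f(\{c\})=0$ — so, as $\zeta_j\to c$ with $c$ classical, the sets $\Berk\smallsetminus\overline{U_j}$ shrink to $\{c\}$; since $c$ is a Fatou point and $J(f)$ is compact, $J(f)\subseteq U_j$ for large $j$, whence $m=\mu_f(U_j)=1$.

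\emph{From ``eventually'' to ``always'', and exclusivity.} It remains to handle the indices $j<n_1$. The key claim is that $L>0$ in fact forces the \emph{entire} orbit onto $\mathcal T_f$, i.e.\ $\mathrm{mult}_f(\zeta_j)=d$ for all $j\ge0$ (for the tail this is established; backward it should follow from the shape of $\mathcal T_f$ near $c$, which contains the whole initial segment of the orbit). Granting this, the surplus balance $\sum_{\vec v}s_f(\vec v)=d-\mathrm{mult}_f(\zeta_j)=0$ at each $\zeta_j$ forces $s_j=s_f(\vec v_j)=0$ for every $j$, so $s_{f^n}(U)=\sum_{j<n}(m_0\cdots m_{j-1})\,d^{\,n-1-j}s_j\equiv 0$ and $\mu_f(U)=P_\infty=L$; also $f^{-1}(U_{j+1})=U_j$ now holds for all $j$, so pulling the inclusion $J(f)\subseteq U_j$ (large $j$) backward shows $\mu_f(f^n(U))=\mu_f(U_n)=1$ for all $n\ge1$. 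This is alternative~(2). Finally the two alternatives are mutually exclusive: (2) gives $L=\mu_f(U)>0$, incompatible with the $L=0$ underlying~(1); and at least one holds since either $L=0$ or $L>0$. I expect the genuine obstacle to be precisely this last paragraph — controlling the orbit and the disks $U_j$ at the steps before the orbit settles onto $\mathcal T_f$, i.e.\ showing that $L>0$ already places $\zeta_0$ on the locus of total ramification.
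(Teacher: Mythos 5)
Your argument tracks the paper's proof closely through the first several steps: the telescoping identity $s_{f^n}(U)=d^n\bigl(\mu_f(U)-P_n\mu_f(U_n)\bigr)$ is a finite reformulation of the equidistribution argument the paper uses, and both approaches deduce from ``case (1) fails'' that $P_\infty>0$, hence $m_f(D(\vec v_j))=d$ for $j\gg 0$, hence $\zeta_j\in\Ramtot_f$ eventually, hence convergence to a classical exceptional orbit along $\Ramtot_f$ (with the same exclusion of the potentially-good-reduction case, via $J(f)=\{\zeta_j\}\Rightarrow\mu_f(U_j)=0$). Your identification of $c\notin U_j$ for large $j$, hence $J(f)\subseteq U_j$ and $m=1$, also matches the paper.

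The gap you flag at the end is real, and the proposed fix is the wrong one. Your ``key claim'' --- that $L>0$ forces $m_f(\zeta_j)=d$ for \emph{all} $j\geq0$, so that $s_j=0$ follows at every step from the surplus balance at $\zeta_j$ --- is not what $L>0$ actually gives you (you have $P_n\searrow P_\infty>0$, which only forces $m_j=d$ for $j\geq n_1$; the initial segment $\zeta_0,\ldots,\zeta_{n_1-1}$ is unconstrained and in general need not lie in $\Ramtot_f$). Trying to push the whole orbit onto $\Ramtot_f$ is a dead end. The paper instead exploits the \emph{polynomial} structure once $f$ is normalized so that $c=\infty$: it shows $\zeta$ lies above the entire Julia set, so that either $\infty\notin U$ or $\mu_f(U)=0$. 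In the former case $f^n(U)$ omits $\infty$ for every $n$ (a polynomial never pulls $\infty$ back to a finite point), hence $f^n(U)\neq\Berk$ and $s_{f^n}(U)=0$ for all $n\geq 1$ directly --- no claim about $m_f(\zeta_j)$ for small $j$ is needed. In the latter case $L=0$ and you were in case~(1) after all. This also delivers $\zeta\in f^n(U)$ and hence $J(f)\subseteq f^n(U)$ and $\mu_f(f^n(U))=1$ for every $n\geq1$, not just large $n$. So to close your proof you should replace the ``entire orbit on $\mathcal T_f$'' step by: normalize to a polynomial, prove $\zeta\succ J(f)$, observe $\infty\notin U$ (else $\mu_f(U)=0$), and conclude $s_{f^n}(U)=0$ for all $n$ from the fact that $\infty\notin f^n(U)$.
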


\begin{proof}	
	The two cases of the proposition are mutually exclusive. For if \eqref{Item2} holds, then $s_f(U) = 0$, so that $f(U) \neq \Berk$. The relation $\mu = \frac{1}{d} f^* \mu$ of Borel measures yields
	\[
		\mu(U) = \frac{m_f(U)}{d}\mu\left(f(U)\right) = \frac{m_f(U)}{d} > 0.
	\]
But then \eqref{Item1} is contradicted. 

	In the remainder of the proof, let us assume that case \eqref{Item1} of the proposition does not hold. The equilibrium measure $\mu_f$ does not charge~$\zeta$ by hypothesis on the Julia set of $f$. Let $y$ be an arbitrary point of $\Berk$ that is not a classical exceptional point. Using equidistribution of iterated pre-images \cite[Thm.~A]{Favre_Rivera-Letelier_Ergodic_2010}, we find that 
	\[
		\mu_f(U) = \lim_{n \to \infty} \frac{\#\left(f^{-n}(y) \cap U\right)}{d^n} = \lim_{n \to \infty} 
			\frac{\varepsilon(y, n, U) \cdot m_{f^n}(U) + s_{f^n}(U)}{d^n},
	\]
where $\varepsilon(y, n, U) = 1$ if $y \in \overline{f^n}(U)$ and $0$ otherwise.  We conclude that $m_{f^n}(U) \neq o(d^n)$; for otherwise, we are in case \eqref{Item1} of the proposition. 

	Let $\zeta_0 = \zeta$ and set $\zeta_n = f(\zeta_{n-1})$ for each $n \geq 1$. We can write $\vec{v}_n \in T\Berk_{\zeta_n}$ for the tangent vector such that $D(\vec{v}_0) = U$ and $Tf^n(\vec{v}_0) = \vec{v}_n$. Then 
	\[
		m_{f^n}(U) = \prod_{i = 0}^{n-1} m_f(D(\vec{v}_i)).
	\]
Each factor in the product is an integer in the range $1, \ldots, d$. If infinitely many of the multiplicities $m_f(D(\vec{v}_i))$ are strictly smaller than $d$, then $m_{f^n}(U) = o(d^n)$. Thus $m_f(D(\vec{v}_n)) = d$ for all $n \gg 0$. As multiplicities are upper semi-continuous, this shows $m_f(\zeta_n) = d$ for all $n$ sufficiently large, so that the orbit $\OO_f(\zeta)$ eventually lies in the locus of total ramification for~$f$. 

	We now show that $\OO_f(\zeta)$ converges to a classical exceptional orbit.  Let $n_0$ be such that $\zeta_n \in \Ramtot_f$ for all $n \geq n_0$. The locus of total ramification is connected \cite[Thm.~8.2]{Faber_Berk_RamI_2013}, and any pair of points in $\Ramtot_f$ lie at finite hyperbolic distance to each other unless one is a classical critical point. So it suffices to prove that the hyperbolic distance $\rhoH(\zeta_{n_0}, \zeta_n)$ grows without bound. For ease of notation, let us assume that $n_0 = 0$. Since $\zeta_n, \zeta_{n+1} \in \Ramtot_f$, the entire segment connecting them must lie in the locus of total ramification as well. Hence $f$ maps $[\zeta_n, \zeta_{n+1}]$ injectively onto $[\zeta_{n+1}, \zeta_{n+2}]$. Moreover, $\rhoH(\zeta_{n+1}, \zeta_{n+2}) = d \cdot \rhoH(\zeta_n, \zeta_{n+1})$. By induction, we see that 
		\[
			\rhoH(\zeta_{n + \ell}, \zeta_{n + \ell + 1}) = d^\ell \cdot \rhoH(\zeta_n, \zeta_{n+1}), \quad \ell = 0, 1, 2, \ldots,
		\]
so that the locus of total ramification has infinite diameter. The locus of total ramification has at most two classical points in it; hence, some classical totally ramified point $c$ is an accumulation point of $\OO_f(\zeta)$. By (weak) continuity of $f$, we find $f(c) \in \Ramtot_f$. So $c$ is exceptional of period~1 or~2. The orbit $\OO_f(\zeta)$ must actually converge to the orbit of $c$ since the latter is attractive. 

	Since $f$ has a classical exceptional point, it is conjugate either to a polynomial or to $z \mapsto z^{-d}$. We treat the former case and leave the latter to the reader. Without loss of generality, we now assume that $f$ is a polynomial and that $f^n(\zeta)$ converges to $\infty$ along the locus of total ramification. 
As $k$ has characteristic zero, the ramification locus near $\infty$ is contained in a strong tubular neighborhood of finite radius around $(\zeta_{0,R}, \infty)$ for some $R > 1$ \cite[Thm.~F]{Faber_Berk_RamII_2012}. Since hyperbolic distance is expanding on the ramification locus, we see that $f^n(\zeta)$ converges to infinity along the segment $(\zeta_{0,R}, \infty)$.
In particular, since the Julia set of $f$ is bounded away from $\infty$, and since $f$ preserves the ordering of points in $\Berk$ relative to $\infty$, we see that $\zeta$ must lie above the entire Julia set. That is, every segment from a Julia point to $\infty$ must pass through $\zeta$.
	
	Note that if $V$ is a $\Gamma$-disk, then either $V$ does not meet infinity or it does not meet the Julia set (or both). In particular, the surplus multiplicities satisfy $s_{f^n}(U) = 0$ for all $n \geq 1$. Consequently, $U$ must meet the Julia set; else,  $\mu_f(U) = 0$ and we are in case~\eqref{Item1}. Observe that $\zeta \in f^n(U)$ for each $n \geq 1$, so that the entire Julia set of $f$ is contained in $f^n(U)$. This shows $\mu_f\left(f^n(U)\right) = 1$, and we are in case~\eqref{Item2} of the proposition as desired. 
\end{proof}

\begin{lem}[Surplus Estimate]
	Let $f: \Berk \to \Berk$ be a rational function of degree $d \geq 2$, and let $\Gamma = \{\zeta\}$ be a singleton vertex set. Set $\Gamma' = \{\zeta, f(\zeta)\}$. (Note that $\Gamma = \Gamma'$ is allowed.) For any $\Gamma$-disk $U$ and any $\Gamma'$-measure solution $\nu$ to the equation  $f^*\nu = d \cdot \pi_* \nu$, we find that 
	\[
		\nu(U) \geq \frac{s_f(U)}{d}.
	\]
\end{lem}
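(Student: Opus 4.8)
The plan is a short duality argument: test the identity $f^{*}\nu = d\cdot\pi_{*}\nu$ against the indicator function of $U$, and control the resulting pushforward by the pre-image count for $f$ on disks.

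First I would observe that, since $\Gamma=\{\zeta\}$ is a singleton, every $\Gamma$-domain — in particular the $\Gamma$-disk $U$ — is an element of $\SS(\Gamma)$, so $\mathbf 1_{U}$ is a bounded $\Gamma$-measurable function on $\Berk$. Applying the definition of the pushforward $f_{*}$ from \S\ref{Sec: pullback} to $\phi=\mathbf 1_{U}$, the value of $f_{*}\mathbf 1_{U}$ at a point $y\in\Berk$ is the multiplicity $m_{W,U}$ attached to the element $W\in\SS(\Gamma')$ containing $y$, that is, $\#\!\left(f^{-1}(y)\cap U\right)$, the number of pre-images of $y$ lying in $U$ counted with local degree. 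By the pre-image formula recalled just before Lemma~\ref{Lem: Consistent Multiplicities1}, this quantity equals $m_{f}(U)+s_{f}(U)$ when $y\in\bar f(U)$ and equals $s_{f}(U)$ otherwise; since $m_{f}(U)\geq 0$, in either case it is at least $s_{f}(U)$. Hence $f_{*}\mathbf 1_{U}\geq s_{f}(U)$ pointwise on $\Berk$.

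Next I would feed this into the hypothesis. Since $\pi\colon(\Berk,\Gamma')\to(\Berk,\Gamma)$ is a measurable morphism that is the identity on points, the set $U$ is measurable for $(\Berk,\Gamma')$ and $(\pi_{*}\nu)(U)=\nu(U)$. Using the defining property $\int\phi\,f^{*}\nu=\int f_{*}\phi\,\nu$ with $\phi=\mathbf 1_{U}$, the relation $f^{*}\nu=d\cdot\pi_{*}\nu$, and positivity of $\nu$, we obtain
\[
   d\,\nu(U)=d\,(\pi_{*}\nu)(U)=\int \mathbf 1_{U}\,f^{*}\nu=\int f_{*}\mathbf 1_{U}\,\nu\ \geq\ s_{f}(U)\cdot\nu(\Berk).
\]
Because $\nu$ is a probability measure, $\nu(\Berk)=1$, and dividing by $d$ yields $\nu(U)\geq s_{f}(U)/d$, as claimed.

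There is essentially no obstacle here: the argument is purely formal once one has the adjunction defining $f^{*}$ together with the disk pre-image count. The only two points deserving a moment's care are (i) that $\mathbf 1_{U}$ is an admissible test function — which is precisely where the $\Gamma$-disk hypothesis enters, and is automatic for every $\Gamma$-domain when $\Gamma$ is a singleton — and (ii) keeping the $\pi_{*}$ bookkeeping straight, using that $\pi$ does not move points so it does not change the $\nu$-mass of $U$. The degenerate case $\Gamma=\Gamma'$ (when $f(\zeta)=\zeta$) is covered verbatim, with $\pi$ the identity.
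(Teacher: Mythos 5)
Your proof is correct and takes essentially the same route as the paper: both test $f^*\nu = d\cdot\pi_*\nu$ against the indicator function of the $\Gamma$-disk $U$ via the adjunction defining $f^*$, use the disk pre-image count to bound $f_*\chi_U$ below by $s_f(U)$, and invoke positivity and $\nu(\Berk)=1$. The only cosmetic difference is that the paper evaluates $\int f_*\chi_U\,\nu$ exactly as $s_f(U)+m_f(U)\cdot\nu\bigl(\bar f(U)\bigr)$ before dropping the nonnegative second term, while you bound the integrand pointwise first.
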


\begin{proof}
	For ease of notation, let us write $m = m_f(U)$ and $s = s_f(U)$. We may explicitly compute the multiplicities appearing in the pullback operator to be
 	\[
		m_{V,U} = 
				\begin{cases}
					m+s & \text{if $V \subset \bar f(U)$} \\
					s & \text{if $V \not\subset \bar f(U)$}. 
				\end{cases}
	\]	
Then for $\chi_U$ the characteristic function on the $\Gamma$-disk $U$, 
	\benn
		\ba
			d \cdot \pi_* \nu(U) = f^* \nu(U)  &= \int_{\bar f(U)} f_*\chi_U \ \nu 
				+ \int_{\Berk \smallsetminus \bar f(U)} f_*\chi_U \ \nu\\
				&= (m + s) \cdot \nu\left( \bar f(U) \right) 
					+ s \cdot \nu\left(\Berk \smallsetminus \bar f(U)\right) \\
				&= s + m \cdot \nu\left( \bar f(U) \right) \geq s. 
		\ea
	\eenn
Dividing by $d$ gives the result. 
\end{proof}


\subsection{Simultaneous solutions to iterated pullback formulas}
\label{Sec: Main Berkovich Theorem}

	The equation $f^* \nu = d \cdot \pi_* \nu$ does not necessarily have a unique solution $\nu \in M^1(\Gamma)$ as one might expect by analogy with the standard setting. However, the solution does become essentially unique if we impose all pullback relations $(f^n)^* \nu = d^n \cdot \pi_{n*} \nu$ for $n = 1, 2, 3, \ldots$
	
	Let $\Gamma = \{\zeta\}$ be a singleton vertex set for $\Berk$. Let $\Gamma_n = \{\zeta, f^n(\zeta)\}$ for each $n \geq 1$, and write $(f^n)^*$ and $\pi_{n*}$ for the pullback and pushforward operators relative to $\Gamma$ and $\Gamma_n$, respectively. We define a set of $\Gamma$-measures $\Delta_f \subset M^1(\Gamma)$  by
$$
		\Delta_f = \bigcap_{n \geq 1} \pi_{n*} \left\{ \omega \in M^1(\Gamma_n) : 
			(f^n)^*\omega = d^n \cdot \pi_{n*} \omega \right\}. 
$$
Each element of $\Delta_f$ is the projection of a solution to a pullback formula for each iterate of $f$, although we do not require any compatibility among these solutions. Linearity of the pullback and pushforward operators shows that $\Delta_f$ is a convex polyhedral set in the space $M^1(\Gamma)$.  Note that $\Delta_f$ is nonempty: since $\omega_{f,\Gamma} = \omega_{f^n, \Gamma}$, the set $\Delta_f$ must contain the equilibrium $\Gamma$-measure   $\omega_{f, \Gamma}$ (Lemma~\ref{Lem: canonical measure}). 

\begin{remark}
	The intersected sets that define $\Delta_f$ are typically not nested. 
\end{remark}

\begin{thm}
\label{Thm: One-vertex Unique}
	Let $f : \Berk \to \Berk$ be a rational function of degree $d \geq 2$, and let $\Gamma = \{\zeta\}$ be a singleton vertex set. Suppose that the Julia set of $f$ is not equal to $\{\zeta\}$. With the above notation, $\Delta_f$ is the convex hull of the equilibrium $\Gamma$-measure $\omega_{f,\Gamma}$ and at most one probability measure $\delta_{\EE}$ supported on a classical exceptional orbit~$\EE$. Moreover, if $\Delta_f \neq \{\omega_{f, \Gamma}\}$, then $f^n(\zeta)$ converges to an exceptional orbit along the locus of total ramification for $f$. 
\end{thm}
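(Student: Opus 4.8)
The plan is to argue by the dichotomy in Proposition~\ref{Prop: surplus equidistribution}: either every open Berkovich disk $U$ with boundary point $\zeta$ satisfies case~\eqref{Item1} of that proposition, or some such $U$ satisfies case~\eqref{Item2}. In the second situation the orbit $\OO_f(\zeta)$ converges to a classical exceptional orbit $\EE$ along $\Ramtot_f$, and (as in the proof of that proposition) $f$ is conjugate to a polynomial, so $\EE=\{\infty\}$, or to $z\mapsto z^{-d}$, so $\EE=\{0,\infty\}$. I will show that the first alternative forces $\Delta_f=\{\omega_{f,\Gamma}\}$ and the second forces $\Delta_f=\operatorname{conv}(\omega_{f,\Gamma},\delta_\EE)$; the ``moreover'' clause is the contrapositive of the first statement.

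\emph{First alternative.} Suppose $s_{f^n}(U)=\mu_f(U)\,d^n+o(d^n)$ for every disk $U$ with boundary $\zeta$. Fix $\nu\in\Delta_f$; for each $n$ pick $\omega_n\in M^1(\Gamma_n)$ with $\pi_{n*}\omega_n=\nu$ and $(f^n)^*\omega_n=d^n\pi_{n*}\omega_n$. Applying the Surplus Estimate to $f^n$ (with $\Gamma_n$ in the role of $\Gamma'$) gives $\nu(U)=\pi_{n*}\omega_n(U)\ge s_{f^n}(U)/d^n$ for every $\Gamma$-disk $U$; letting $n\to\infty$ yields $\nu(U)\ge\mu_f(U)=\omega_{f,\Gamma}(U)$. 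Since $\julia(f)\ne\{\zeta\}$, the measure $\mu_f$ does not charge $\zeta$ (as in the proof of Proposition~\ref{Prop: surplus equidistribution}), so summing over the at most countably many $\Gamma$-disks gives $1-\nu(\{\zeta\})=\sum_U\nu(U)\ge\sum_U\mu_f(U)=1$. Hence $\nu(\{\zeta\})=0$ and $\nu(U)=\mu_f(U)$ for all $U$, i.e.\ $\nu=\omega_{f,\Gamma}$; thus $\Delta_f=\{\omega_{f,\Gamma}\}$, the degenerate case of the assertion with no $\delta_\EE$. In particular, if $\Delta_f\ne\{\omega_{f,\Gamma}\}$ then we are in the second alternative, so $\OO_f(\zeta)$ converges to a classical exceptional orbit along $\Ramtot_f$.

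\emph{Second alternative.} Here $\operatorname{conv}(\omega_{f,\Gamma},\delta_\EE)\subseteq\Delta_f$: indeed $\omega_{f,\Gamma}\in\Delta_f$ (noted before the theorem), and since $\EE$ is totally invariant it is exceptional for every $f^n$, so Lemma~\ref{Lem: exceptional solutions} applied to $f^n$ gives $\delta_\EE\in\pi_{n*}\{\omega\in M^1(\Gamma_n):(f^n)^*\omega=d^n\pi_{n*}\omega\}$ for all $n$, whence $\delta_\EE\in\Delta_f$; and $\Delta_f$ is convex. For the reverse inclusion, fix $\nu\in\Delta_f$ with solutions $\omega_n\in M^1(\Gamma_n)$, $\pi_{n*}\omega_n=\nu$, as above, write $\zeta_n=f^n(\zeta)$, and take $n$ so large that $\zeta_n$ lies on $\Ramtot_f$ near $\EE$ and separates $\zeta$ from $\EE$. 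The first step is to show that $\omega_n$ charges neither $\{\zeta\}$, nor $\{\zeta_n\}$, nor the $\Gamma_n$-domain containing the segment $(\zeta,\zeta_n)$: I would read this off from the pullback identity $d^n\nu(W)=\sum_{V\in\SS(\Gamma_n)}m_{V,W}\,\omega_n(V)$ (multiplicities for $f^n$ relative to $\Gamma$ and $\Gamma_n$) evaluated on $W=\{\zeta\}$ and on the $\Gamma$-disk(s) meeting $\EE$, using that $\zeta$ is a preimage of $\zeta_n$ of positive multiplicity lying outside every $\Gamma$-disk, together with the Surplus Estimate bound $\nu(U)\ge s_{f^n}(U)/d^n$ where overshoot occurs. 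Then $\nu(\{\zeta\})=0$, $\omega_n$ is carried by the $\Gamma$-disks and the $\Gamma_n$-domains clustered at $\EE$, and the total mass $b$ of the latter is independent of $n$ (it equals the $\nu$-mass of the $\Gamma$-disk(s) meeting $\EE$). The second step uses equidistribution of iterated preimages \cite{Favre_Rivera-Letelier_Ergodic_2010}, uniform for base points in a compact set disjoint from a neighbourhood of $\EE$: for a $\Gamma$-disk $W$ disjoint from $\EE$ this makes $m_{V,W}/d^n\to\mu_f(W)$ uniformly over $V$, so the identity above gives $\nu(W)=(1-b)\,\mu_f(W)$. Assembling, $\nu=(1-b)\,\omega_{f,\Gamma}+b\,\delta_\EE$. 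The case $f=z\mapsto z^{-d}$ is entirely analogous, tracking the two classical exceptional points separately.

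\emph{Main obstacle.} The real work is in the second alternative, in two places. First, the multiplicity bookkeeping along $\Ramtot_f$ needed to kill the mass of $\omega_n$ on $\{\zeta\}$, on $\{\zeta_n\}$, and on the annular $\Gamma_n$-domain: this uses essentially that $f$ is a polynomial or $z\mapsto z^{-d}$ and the geometry of $\Ramtot_f$ near $\EE$ (in particular that $f^n$ maps the $\zeta$-disk toward $\EE$ onto the $\zeta_n$-disk toward $\EE$), and it must be done carefully because the disks toward $\EE$ may carry overshoot and may meet the Julia set. Second, one needs the uniform-on-compacta form of preimage equidistribution in order to identify the diffuse part of $\nu$ with $(1-b)\mu_f$; the crux is that the measures $\omega_n$ restricted to the $\Gamma_n$-domains accumulating at $\EE$ — precisely where equidistribution fails — cannot produce spurious diffuse mass or an atom other than $b\,\delta_\EE$.
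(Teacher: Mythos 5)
Your first alternative is correct and is essentially the paper's own argument: when $\OO_f(\zeta)$ does not converge to a classical exceptional orbit along $\Ramtot_f$, combining the Surplus Estimate with Surplus Equidistribution gives $\nu(U)\ge\mu_f(U)$ for every $\Gamma$-disk $U$, and comparing total masses forces $\nu=\omega_{f,\Gamma}$; the ``moreover'' clause follows by contraposition.

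The second alternative is where the proposal falls short of a proof: you have isolated the right things to establish, but the tools you name will not establish them. First, once $\OO_f(\zeta)$ converges to the exceptional orbit along $\Ramtot_f$, Proposition~\ref{Prop: surplus equidistribution}\eqref{Item2} gives $s_{f^n}(U)=0$ for \emph{every} $\Gamma$-disk $U$ and every $n$. So the Surplus Estimate is vacuous in this regime, your plan to read off the vanishing of $\omega_n$ on $\{\zeta\}$, on $\{\zeta_n\}$, and on the annulus $A$ ``together with the Surplus Estimate bound where overshoot occurs'' has no traction, and the worry that ``disks toward $\EE$ may carry overshoot'' is moot. What actually produces the vanishing (after normalizing to a polynomial with $\EE=\{\infty\}$, say) is a direct bookkeeping comparison: since $m_{f^n}(U_\infty)=d^n$ and $s_{f^n}(U_\infty)=0$, the quantized pullback relation on $U_\infty$ gives $\nu(U_\infty)=\omega_n\bigl(\overline{f^n}(U_\infty)\bigr)$; on the other hand $\pi_{n*}\omega_n=\nu$ and the partition of $U_\infty$ into $\SS(\Gamma_n)$-pieces give
\[
\nu(U_\infty)=\omega_n\bigl(\overline{f^n}(U_\infty)\bigr)+\omega_n(A)+\omega_n\bigl(\overline{f^n}(U_0)\bigr)+\omega_n(\{\zeta_n\}),
\]
where $U_0$ is the union of $\Gamma$-disks disjoint from the Julia set and not containing $\infty$. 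Comparing forces the extra three terms to vanish, and the analogous pullback computation on $U_0$ then yields $\nu(U_0)=0$.

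Second, your plan to identify the diffuse part via a uniform-on-compacta form of preimage equidistribution is neither invoked nor needed by the paper, and you offer no argument that such a statement holds. The paper replaces it with an exact algebraic identity: if $U_i$ is a $\Gamma$-disk meeting the Julia set, then $f(U_i)$ contains the entire Julia set, so the ordinary Borel pullback formula $f^*\mu_f=d\,\mu_f$ on $\Berk$ yields $\mu_f(U_i)=m_{f^n}(U_i)/d^n$ \emph{exactly} for every $n$, not asymptotically. Substituting this, together with $\omega_n(A)=0$, into the quantized pullback relation gives $\nu(U_i)=\mu_f(U_i)\cdot\omega_n\bigl(\overline{f^n}(U_i)\bigr)$, and since $\overline{f^n}(U_1)=\cdots=\overline{f^n}(U_r)$ the scalar $a:=\omega_n\bigl(\overline{f^n}(U_i)\bigr)$ is independent of $i$ and $n$. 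Together with $\nu(U_0)=0$ and $b:=\nu(U_\infty)$ this yields $\nu=a\,\omega_{f,\Gamma}+b\,\delta_\infty$, and a mass count gives $\nu(\{\zeta\})=0$ and $a+b=1$. It is this exact identity that controls the contribution of the $\omega_n$-mass accumulating near $\EE$ --- the part you correctly flag as the place where equidistribution fails, but for which your proposal offers no substitute.
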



\begin{remark}
	For our application to complex dynamics, it is sufficient to restrict to countably supported measures in the definition of $\Delta_f$. But the theorem shows that this hypothesis is unnecessary: an arbitrary $\Gamma$-measure satisfying all pullback formulas is countably supported. 
\end{remark}	

\begin{remark}
\label{Rem: Single Vertex p}
	With a little more work, one can show that this result continues to hold when $k$ has positive characteristic provided that $\OO_f(\zeta)$ does not converge to a wildly ramified exceptional orbit. 
\end{remark}

\begin{cor}
	With the hypotheses of Theorem \ref{Thm: One-vertex Unique}, no measure in $\Delta_f$ charges $\zeta$. 
\end{cor}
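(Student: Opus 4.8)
The corollary should fall out of Theorem~\ref{Thm: One-vertex Unique} together with one standard fact about equilibrium measures. By the theorem, every $\nu \in \Delta_f$ is a convex combination $\nu = t\,\omega_{f,\Gamma} + (1-t)\,\delta_{\EE}$ with $t \in [0,1]$, where $\EE$ is a classical exceptional orbit (and the second term is absent if $\Delta_f = \{\omega_{f,\Gamma}\}$). Because $\EE \subset \PP^1(k)$ consists of classical points while $\zeta$ is a type~II point, $\delta_{\EE}(\{\zeta\}) = 0$; and by definition $\omega_{f,\Gamma}(\{\zeta\}) = \mu_f(\{\zeta\})$. Hence $\nu(\{\zeta\}) = t\,\mu_f(\{\zeta\})$, and the whole statement reduces to showing that the equilibrium measure $\mu_f$ does not charge the type~II point $\zeta$.

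To see this, I would invoke the non-atomicity of $\mu_f$ away from the case of potentially good reduction. Concretely, suppose $\mu_f(\{\zeta\}) > 0$ and let $c^{\ast} = \sup_{x \in \Berk}\mu_f(\{x\}) > 0$; since $\mu_f$ is a finite measure, the set $S = \{x \in \Berk : \mu_f(\{x\}) = c^{\ast}\}$ is finite and nonempty. From $f^*\mu_f = d\,\mu_f$ and the formula $(f^*\mu_f)(\{x\}) = m_f(x)\,\mu_f(\{f(x)\})$ one gets $\mu_f(\{y\}) = \frac{m_f(y)}{d}\,\mu_f(\{f(y)\})$ for every $y$. Summing over $y \in f^{-1}(x)$ and using $\sum_{y \in f^{-1}(x)} m_f(y) = d$ shows that if $x \in S$ then $x$ has a unique preimage, which is totally ramified and again lies in $S$, and likewise $f(x) \in S$ with $x$ totally ramified. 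Thus $S$ is a finite, totally invariant set all of whose points are totally ramified, i.e. a union of classical exceptional cycles and at most one type~II exceptional point. Since $\mu_f$ charges no classical point, $S$ must be the single type~II point $\zeta_0$; conjugating $\zeta_0$ to the Gauss point we find that $f$ has good reduction, whence $J(f) = \{\zeta_0\}$ and $\mu_f = \delta_{\zeta_0}$. Then $\mu_f(\{\zeta\}) > 0$ forces $\zeta = \zeta_0$, so $J(f) = \{\zeta\}$, contradicting the hypothesis. One may also bypass this argument entirely by citing the description of $\mu_f$ in \cite{Favre_Rivera-Letelier_Ergodic_2010}, from which $J(f) \neq \{\zeta\}$ immediately yields $\mu_f(\{\zeta\}) = 0$.

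The deduction from the theorem and the bookkeeping with $\delta_{\EE}$ are routine; the only point requiring care is the non-atomicity input, namely that an atom of $\mu_f$ at a type~II point forces that point to be totally invariant, hence $f$ of potentially good reduction and $J(f)$ a single point. I expect this to be the main obstacle in a self-contained write-up, though it can be replaced by an appeal to the ergodic theory of $\mu_f$ already cited in the paper.
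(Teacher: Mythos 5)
Your reduction matches the paper's proof exactly: Theorem~\ref{Thm: One-vertex Unique} forces every $\nu \in \Delta_f$ to be a convex combination $t\,\omega_{f,\Gamma} + (1-t)\,\delta_{\EE}$ with $\EE$ classical, so the claim reduces to $\mu_f(\{\zeta\}) = 0$, which the paper asserts as a known consequence of $J(f) \neq \{\zeta\}$ by appeal to \cite{Favre_Rivera-Letelier_Ergodic_2010} --- the ``bypass'' you offer at the end. Your optional self-contained argument for $\mu_f(\{\zeta\}) = 0$ is in the right spirit but has a small gap in the step ``summing over $y \in f^{-1}(x)$ \dots\ shows that if $x \in S$ then $x$ has a unique preimage'': the sum only shows that the atomic masses of the preimages of $x$ add up to $c^\ast$, so if $x$ had several preimages, each would simply have mass strictly less than $c^\ast$ and none would lie in $S$ --- this does not by itself force uniqueness. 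To repair it, one should first note $f(S) \subset S$ with every point of $S$ totally ramified, deduce from finiteness that $S$ contains a periodic point $z$ of some period $p$ with $m_{f^p}(z) = d^p$, so $(f^p)^{-1}(z') = \{z'\}$ for each $z'$ in the cycle, and then observe that a strictly preperiodic point of $S$ landing on the cycle would give a second preimage of a cycle point, a contradiction; hence $S$ is exactly a union of exceptional cycles, and the argument concludes as you say. This patch is routine, and since the paper itself simply cites the non-atomicity of $\mu_f$, your proposal as a whole is essentially the same route as the paper's.
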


\begin{proof}
	The hypothesis on the Julia set guarantees that $\zeta$ is not exceptional and that $\mu_f$ does not charge $\zeta$. 
\end{proof}

\begin{proof}[Proof of Theorem~\ref{Thm: One-vertex Unique}]
	Suppose that $f^n(\zeta)$ does not converge along the locus of total ramification to a classical exceptional periodic orbit for $f$. Let $U$ be any $\Gamma$-domain for $\Gamma = \{\zeta\}$. If $\nu \in \Delta_f$, Proposition~\ref{Prop: surplus equidistribution} and the Surplus Estimate applied to $f^n$ and $U$ show that 
	\[
		\nu(U) \geq \frac{s_{f^n}(U)}{d^n} = \mu_f(U) + o(1). 
	\]
Since this is true for any $\Gamma$-disk $U$, and since $\mu_f$ is a probability measure with no support at $\zeta$, we conclude that $\nu(U) = \mu_f(U)$ for every $U \in \SS(\Gamma)$. 

	Now suppose that $f^n(\zeta)$ converges along the locus of total ramification to the orbit of a classical  exceptional point. Without loss, we may assume that the exceptional point is fixed by replacing $f$ with $f^2$. After conjugating the exceptional fixed point to $\infty$, we may assume that $f$ is a polynomial. As in the proof of Proposition~\ref{Prop: surplus equidistribution}, we find that $f^n(\zeta)$ converges to $\infty$ along the segment $(\zeta_{0,R}, \infty)$ for some $R > 1$, and $\zeta$ lies above the entire Julia set. 
	
	Suppose that $U$ is a $\Gamma$-domain that meets the Julia set. Then $f(U)$ contains the entire Julia set, and the standard pullback formula $f^* \mu_f = d \cdot \mu_f$ on $\Berk$ shows that
	\begin{equation}
	\label{Eq: mu on U}
		d \cdot \mu_f(U) = m_f(U) \cdot \mu_f(f(U)) = m_f(U) \quad \Rightarrow \quad \mu_f(U) = \frac{m_f(U)}{d}.
	\end{equation}
In particular, only finitely many $\Gamma$-disks may meet the Julia set. 
	
	Fix any $\nu \in \Delta_f$. Write $U_\infty$ for the unique $\Gamma$-domain containing infinity; write $U_1, \ldots, U_r$ for the $\Gamma$-domains that meet the Julia set; write $U_0$ for the union of the remaining elements of $\SS(\Gamma)$. Note that since we are in case~\eqref{Item2} of Proposition~\ref{Prop: surplus equidistribution}, the surplus multiplicity satisfies $s_{f^n}(U) = 0$ for all $n \geq 1$ and $U\in\SS(\Gamma)$. Furthermore, we observe that $f(U_\infty) \subset U_\infty$ and $m_{f^n}(U_\infty) = d^n$, and that $f^n$ maps $U_0$ onto $f^n(U_0) \subset U_\infty$ in everywhere $d^n$-to-1 fashion. 
	
	First we show that $\nu(U_0) = 0$. For each $n \geq 1$, there exists $\nu_n \in M^1(\Gamma_n)$ such that $(f^n)^* \nu_n = d^n \cdot \pi_{n*} \nu_n = d^n \cdot \nu$. Then
	\be
	\label{Eq: mass at infinity}
			d^n \cdot \pi_{n*} \nu_n(U_\infty) = (f^n)^* \nu_n(U_\infty)  
			= \int (f^n)_*\chi_{U_\infty} \ \nu_n 
				= d^n \cdot \nu_n\left( f^n(U_\infty) \right).
	\ee
Thus $\nu(U_\infty) = \nu_n\left( f^n(U_\infty) \right)$ for any $n \geq 1$. Write $A$ for the annulus with boundary points $\zeta$ and $f^n(\zeta)$. By definition of the pushforward, we see that
	\[
		\nu(U_\infty) = \pi_{n*} \nu_n(U_\infty) = \nu_n(f^n(U_\infty)) + 
			\nu_n(f^n(U_0)) + \nu_n(A). 
	\]
Therefore,  $\nu_n(A) = \nu_n(f^n(U_0)) = 0$. But the calculation \eqref{Eq: mass at infinity} applies equally well to $U_0$ to show that $\nu(U_0) = \nu_n\left( f^n(U_0) \right)$, and so we conclude that $\nu(U_0) = 0$.

	Next we observe that for $i = 1, \ldots, r$, we have
	\benn
			d^n \cdot \pi_{n*} \nu_n(U_i) = (f^n)^* \nu_n(U_i)  
				= \int (f^n)_*\chi_{U_i} \ \nu_n 
				= m_{f^n}(U_i)\cdot \nu_n\left( f^n(U_i) \right).
	\eenn
	From \eqref{Eq: mu on U}, we see that 
		\[
			\mu_f(U_i) = \frac{m_f(U_i)}{d} = \frac{m_{f^n}(U_i)}{d^n}, \quad n \geq 1, \quad i = 1, \ldots, r.
		\]		
Combining the last two displayed equations gives
	\[
		\nu(U_i) = \pi_{n*} \nu_n(U_i) = \mu_f(U_i) \nu_n\left( f^n(U_i) \right).
	\]
The quantity $a := \nu_n\left( f^n(U_i) \right)$ is independent of $n$ and $i$ since $\mu_f(U_i) > 0$ for $i = 1, \ldots, r$ and $f^n(U_1) = \cdots = f^n(U_r)$. Setting $b = \nu(U_\infty)$, we have proved that
		$\nu = a \cdot \omega_{f, \Gamma}  + b \cdot \delta_\infty$. 
\end{proof}


\bigskip
\section{A transfer principle}
\label{Sec: Transfer}

	In this section, we complete the proof of Theorem~B.  We explain the transfer of solutions of the pullback formula for dynamics on the our complex surfaces to $\Gamma$-measure solutions of the pullback formula on~$\Berk$, and vice versa.
	
\subsection{Reduction and the residual measures}  \label{residual measures}
	Let $X \to \DD$ be a proper fibered surface over a complex disk with generic fiber $\PP^1_\CC$. Assume that the fiber $X_0$ over the origin is reduced. Let $\LL$ be the completion of an algebraic closure of $\CCt$ endowed with the natural non-Archimedean absolute value, and write $\LL^\circ$ for its valuation ring. We claim that $X$ gives rise, canonically, to a vertex set $\Gamma \subset \Berk$. The local ring of $\DD$ at the origin is contained inside $\LL^\circ$, and hence so is its completion. By completing along the central fiber $X_0$ and base extending to $\LL^\circ$, we obtain a formal scheme $\XX$ over $\LL^\circ$ with generic fiber $\Berk = \Berk_{\LL}$. Note that since $X_0$ is reduced, it may be identified with the special fiber $\XX_s$ as $\CC$-schemes.  Let 
		$$\red_X: \Berk \to X_0$$ 
be the surjective reduction map \cite[2.4.4]{Berkovich_Spectral_Theory_1990}. Let $\eta_1, \ldots, \eta_r$ be the generic points of the irreducible components of the special fiber $X_0$. There exist unique type~II points $\zeta_1, \ldots, \zeta_r \in \Berk$ such that $\red_X(\zeta_i) = \eta_i$ for $i = 1, \ldots, r$. The desired vertex set is $\Gamma = \{\zeta_1, \ldots, \zeta_r\}$. 

For each closed point $x \in X_0$, the formal fiber $\red_X^{-1}(x)$ is a $\Gamma$-domain, as defined in \S\ref{Sec: Gamma}.   The association $x \mapsto  \red_X^{-1}(x)$ induces a bijection between points of the scheme $X_0$ and elements of $\SS(\Gamma)$.  We obtain a projection of measures, 
	$$\red_X^*:  M^1(X_0) \to M^1(\Gamma),$$
where $M^1(X_0)$ is the space of Borel probability measures on $X_0(\CC)$ (with its analytic topology) and $M^1(\Gamma)$ is the space of positive $\Gamma$-measures of total mass~1 on $\Berk$, defined as follows.  Given $\mu\in M^1(X_0)$, let $B = \{x \in X_0(\CC) : \mu(\{x\}) > 0\}$. The set $B$ is at most countable. Write $\eta_1, \ldots, \eta_r$ for the generic points of the irreducible components $C_1, \ldots, C_r$ of $X_0$. Define $\omega = \red_X^*(\mu)$ by
	\[
		\omega\left( \red_X^{-1}(x)\right) := \begin{cases}
				\mu\left( x  \right) & \text{if }  x \in X_0(\CC) \\
				\mu(C_i \smallsetminus B) & \text{if $x = \eta_i$ for some $i = 1, \ldots, r$}.
			\end{cases}
	\]
Evidently,  $\omega(\Berk) = \mu(X_0) =  1$. 

Now let $M^1(\Gamma)^{\dagger} \subset M^1(\Gamma)$ be the subset of $\Gamma$-measures that assign no mass to the elements of $\Gamma$.  The reduction map $\red_X$ induces
	$$\red_{X*}:  M^1(\Gamma)^\dagger \to M^1(X_0)$$
as a partial inverse to $\red_X^*$.  Explicitly,  the \textbf{residual measure} $\mu = \red_{X*}(\omega) \in M^1(X_0)$ is defined by 
	$$\mu(\{x\}) := \omega(\red_X^{-1}(x)) \qquad \left(x \in X_0(\CC) \right). $$
For each $\omega \in M^1(\Gamma)^\dagger$, the residual measure $\mu$ is an atomic probability measure on $X_0$.  
The terminology is explained by the case where $X_0$ is irreducible and $\Gamma = \{\zeta_{0,1}\}$ is the Gauss point of $\Berk$; the mass of the residual measure at a closed point $x \in X_0(\CC)$ is precisely the volume of the residue class $\red_X^{-1}(x) \subset \Berk$.



\subsection{Compatibility of pullbacks}

	Let $f_t$ be a 1-parameter family of dynamical systems of degree $d \geq 2$ with $t$ varying holomorphically in a small punctured disk $\DD^*$ and extending meromorphically over the puncture. As in \S\ref{surface def}, we let $X = \DD \times \PP^1(\CC)$ and write $\pi: Y \to X$ for the minimal modification of $X$ along $X_0$ such that the induced rational map $F: X \dashrightarrow Y$ is not constant along $X_0$. The surfaces $X$ and $Y$ induce vertex sets $\Gamma = \{\zeta\}$ and $\Gamma' = \{\zeta, f(\zeta)\}$ on $\Berk = \Berk_{\LL}$, where $\LL$ is the completion of an algebraic closure of $\CCt$ endowed with the natural non-Archimedean absolute value, and the family $f_t$ defines $f: \Berk \to \Berk$. The pullback $F^*$ from measures on $Y_0$ to measures on $X_0$ is given by the formula (\ref{pullback X}) or (\ref{pullback Y}), depending on whether or not $f$ fixes $\zeta$.  

\begin{prop}[Transfer Principle]  \label{transfer}
	Let $F: X \dashrightarrow Y$, $f:\Berk\to\Berk$, $\Gamma$, and $\Gamma'$ be as above. The following conclusions hold. 
	\begin{enumerate}
		\item If $\mu$ is a measure on the central fiber $Y_0$ such that $F^*\mu = d \cdot \pi_* \mu$, then $\omega = \red_Y^*\mu$ is a $\Gamma'$-measure satisfying $f^*\omega = d \cdot \pi_* \omega$.
		\item If $\omega$ is a countably supported $\Gamma'$ probability measure satisfying $\omega(\Gamma') = 0$ and $f^*\omega = d \cdot \pi_* \omega$, then the residual measure $\mu = \red_{Y*} (\omega)$ satisfies $F^*\mu = d \cdot \pi_* \mu$. 
	\end{enumerate} 
\end{prop}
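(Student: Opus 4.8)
The plan is to observe that both pullback operators in play — the complex-surface operator $F^*$ from measures on $Y_0$ to measures on $X_0$ defined by \eqref{pullback X} and \eqref{pullback Y}, and the non-archimedean operator $f^*: M(\Gamma') \to M(\Gamma)$ of \S\ref{Sec: pullback} — are assembled from the same local combinatorial data, and that the reduction maps $\red_X$ and $\red_Y$ (constructed in \S\ref{residual measures}) carry one picture onto the other. First I would record the dictionary: $\red_X$ identifies $\SS(\Gamma)$ with the set of scheme-theoretic points of $X_0$ (the vertex $\zeta$ with the generic point, a $\Gamma$-disk with a closed point), and likewise $\red_Y$ identifies $\SS(\Gamma')$ with the points of $Y_0$; and the induced self-map of central fibres $F|_{X_0}$ agrees, through these identifications, with the action of $f$ on residue classes, i.e.\ $\bar f\bigl(\red_X^{-1}(x)\bigr) = \red_Y^{-1}(F(x))$ for every closed point $x \in X_0$ (with the usual convention $z = A_t(w)$ on local coordinates from \S\ref{surface def}). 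This is the non-archimedean incarnation of the statement that the reduction of the family $f_t$ at $t = 0$ is $\phi_{f_0}$; it is where one invokes compatibility of $\red$ with the functorial extension of $f$ to $\Berk$ (cf.\ \cite{Rivera-Letelier_Asterisque_2003, Kiwi_Rescaling_Limits_2012}).

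The heart of the matter is a matching of multiplicities. For a closed point $x \in X_0$ with associated $\Gamma$-disk $V = \red_X^{-1}(x)$, I claim the directional multiplicity $m_f(V)$ equals the local degree $m(x) = \deg_x\phi_{f_0}$, the surplus multiplicity $s_f(V)$ equals $s(x) = \ord_x H_{f_0}$, and $m_f(\zeta) = \deg\phi_{f_0}$. These are exactly the quantities counted in Proposition~\ref{counting}: the non-archimedean disk $V$ is the ``tube'' over $x$, and the number of preimages of a point of $\bar f(V)$ (resp.\ of a point outside $\bar f(V)$) falling into $V$ is $m(x) + s(x)$ (resp.\ $s(x)$), which is precisely the defining property of $m_f(V)$ and $s_f(V)$ recalled in \S\ref{Sec: pullback}. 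Granting this, unwinding the two definitions of pullback term by term over $\SS(\Gamma)$ yields the commuting identities
\[
	\red_X^* \circ F^* \;=\; f^* \circ \red_Y^*
	\qquad\text{and}\qquad
	\red_X^* \circ \pi_* \;=\; \pi_* \circ \red_Y^*,
\]
where $\red_X^*$ is extended $\CC$-linearly to positive measures of arbitrary finite volume. There are two geometric cases: $Y = X$ (the reduction $\phi_{f_0}$ is nonconstant, $\zeta = f(\zeta)$, and $\Gamma = \Gamma'$) and $Y \neq X$ (the reduction is constant, $\zeta \neq f(\zeta)$, and $Y_0 = C_0 \cup E_0$ meet at a node $y_s$); in the second case one must keep track of the contractions $\pi$ and $\pi_E$, the point $y_s$, and the unique annulus in $\SS(\Gamma')$ that reduces to $y_s$. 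Once the commuting identities are in hand, assertion~(1) is immediate: applying $\red_X^*$ to $F^*\mu = d \cdot \pi_*\mu$ gives $f^*\omega = d \cdot \pi_*\omega$ with $\omega = \red_Y^*\mu$.

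For assertion~(2), the hypothesis $\omega(\Gamma') = 0$ is exactly what makes $\red_Y^*$ a left inverse of the partial inverse $\red_{Y*}$ on the relevant class, so the residual measure $\mu = \red_{Y*}(\omega)$ satisfies $\red_Y^*\mu = \omega$ (the countable-support hypothesis guarantees $\mu$ is an honest probability measure). Since $\mu$ is atomic, both $F^*\mu = \phi_{f_0}^*(\pi_E)_*\mu + \sum_x s(x)\delta_x$ and $d \cdot \pi_*\mu$ are atomic measures on $X_0$, and an atomic measure on $X_0$ is determined by its masses at closed points — that is, $\red_X^*$ is injective on atomic measures. Feeding $\mu$ into the commuting identities gives
\[
	\red_X^*\bigl(F^*\mu\bigr) = f^*\bigl(\red_Y^*\mu\bigr) = f^*\omega = d \cdot \pi_*\omega = d \cdot \pi_*\bigl(\red_Y^*\mu\bigr) = \red_X^*\bigl(d \cdot \pi_*\mu\bigr),
\]
whence $F^*\mu = d \cdot \pi_*\mu$. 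The main obstacle throughout is the multiplicity matching together with the node-and-contraction bookkeeping of the case $Y \neq X$: relating the complex-analytic preimage counts of Proposition~\ref{counting} to the non-archimedean directional and surplus multiplicities of $f$ at the Gauss point, and chasing these faithfully through the definitions of \S\ref{surface pullback} and \S\ref{Sec: pullback}. Everything else — the two commuting squares, the inverse relation between $\red^*$ and $\red_*$, and injectivity on atomic measures — is formal once that matching is secured.
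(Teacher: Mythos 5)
Your proposal is correct and follows essentially the same route as the paper: identify $\SS(\Gamma)$ with the points of $X_0$ via $\red_X$, match the complex-analytic multiplicities $m(x), s(x), \deg\phi$ with the Berkovich quantities $m_f(U_x), s_f(U_x), m_f(\zeta)$ via the Algebraic Reduction Formula, and then unwind both pullback formulas element-by-element over $\SS(\Gamma)$. The one organizational difference is that you package the verification as unconditional commuting squares $\red_X^*\circ F^* = f^*\circ\red_Y^*$ and $\red_X^*\circ\pi_* = \pi_*\circ\red_Y^*$, whereas the paper verifies the desired equality $f^*\omega = d\cdot\pi_*\omega$ directly, using the hypothesis $F^*\mu = d\cdot\pi_*\mu$ as a shortcut in the vertex calculation (to identify the atom set of $F^*\mu$ with that of $\mu$); your unconditional version does hold, but requires a bit more bookkeeping of atom sets at the vertex that your sketch glosses over. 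Two small inaccuracies that do not affect the argument: $\red_X^*$ is not literally $\CC$-linear (the atom set $B$ depends on the measure), though it is additive on positive measures, which is all you use; and in a few places you write $\phi_{f_0}$ where you mean the normalized reduction $\phi$ of $\Phi = \lim A_t\circ f_t$, which differs from $\phi_{f_0}$ in the case $Y \neq X$.
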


\begin{proof}
	We begin by comparing the notions of multiplicity defined for $F$ (on $X_0$) and for $f$ (on $\Berk$). Lemma~\ref{nontrivial limit} gives a meromorphic family of M\"obius transformations $A_t \in \PGL_2(\CC)$ for $t \in \DD$, holomorphic away from $t = 0$, such that $A_t \circ f_t$ converges as $t\to 0$ to $\Phi\in \Ratbar_d$ with nonconstant reduction $\phi$.  One one hand, this implies that $\phi$ describes the meromorphic map $F$ from the fiber $X_0$ onto its image component $E_0$ in $Y_0$ (or $C_0$ if $X = Y$). Evidently the local degree $m(x)$ for each point of $X_0$ may be read off algebraically as the order of vanishing of $\phi(z) - \phi(x)$ at $x$. On the other hand, we may view $A_t$ as an element $A \in \PGL_2\left(\CCt \right)$. In particular, $A \circ f$ has nonconstant reduction as a rational function on $\Berk$, and the reduction is equal to $\phi$. If $U_x$ is a $\Gamma$-disk with reduction $x \in X_0$, Rivera-Letelier's Algebraic Reduction Formula \cite[Cor.~9.25]{Baker-Rumely_BerkBook_2010} shows that the directional multiplicity $m_f(U_x)$ is equal to the order of vanishing of $\phi(z) - \phi(x)$ at $x$, and so we conclude that 
	\[
		m(x) = m_f(U_x). 
	\]
From the description of the surplus multiplicity of the map $\Phi$ in (\ref{complex m and s}) and the corresponding description of the surplus multiplicity in \cite[Lem.~3.17]{Faber_Berk_RamI_2013}, we also see that
	\[
		s(x) = s_f(U_x). 
	\]
Finally, the Algebraic Reduction Formula shows that $\deg(\phi) = m_f(\zeta)$, where $\zeta$ is the unique vertex in $\Gamma$.

	Since $\omega = \red^*_Y (\mu)$ is supported on countably many $\Gamma'$-domains, to prove the first statement of the Transfer Principle it suffices to show that 
	\be
	\label{Eq: To check1}
		F^* \mu = d \cdot \pi_* \mu \ \Rightarrow \ f^*\omega(U) = d \cdot \pi_* \omega(U) \text{ for every $U \in \SS(\Gamma)$}. \tag{TP1}
	\ee  
Under the hypotheses of the second statement of the Transfer Principle, we find that $\mu$ is countably supported. Thus it suffices to show that
	\be
	\label{Eq: To check2}
		f^* \omega = d \cdot \pi_* \omega \ \Rightarrow \ 
			F^*\mu(\{x\}) = d \cdot \pi_* \mu(\{x\}) \text{ for every closed point $x \in X_0$}. 
		\tag{TP2}
	\ee
We now prove these statements. 
	
\medskip	
	
\noindent	\textbf{Case $Y = X$.} 
Let $x \in X_0$ be a closed point, and write $U_x = \red_X^{-1}(x) \in \SS(\Gamma)$. Then 
	\benn
		\ba
			f^*\omega(U_x) &= \sum_{V \in S(\Gamma)} m_{V,U_x} \omega(V) \\
				&= m_{\bar f(U_x), U_x} \omega(\bar f (U_x)) 
					+ \sum_{V \in \SS(\Gamma)} s_f(U_x) \omega(V) \\
			&= m(x) \mu(\{\phi(x)\}) + s(x) \\
			&= F^* \mu(\{x\}), 
		\ea
	\eenn
while $d \cdot \omega(U_x) = d \cdot \mu(\{x\})$ by definition.  This immediately implies \eqref{Eq: To check2}.  

To verify \eqref{Eq: To check1}, it remains to consider the mass on $\Gamma = \Gamma'$.  Set $B = \{x \in X_0: \mu(x) > 0\}$.   If $F^*\mu = d\cdot \mu$, then $F^*\mu$ has no atoms in $X_0\smallsetminus B$.  From the definition of $F^*\mu$ in (\ref{pullback X}), we see that $F^*\mu$ agrees with $\phi^*\mu$ on $X_0\smallsetminus B$.  Thus, 
	\begin{equation*}
		\begin{aligned}
			d \cdot \omega(\zeta) = d\cdot \mu(X_0 \smallsetminus B) &= F^*\mu(X_0 \smallsetminus B) \\
			&= \phi^* \mu(X_0 \smallsetminus B) = \deg(\phi) \cdot \mu(X_0 \smallsetminus B) = m_f(\zeta) \omega(\zeta)
			= f^*\omega(\zeta).
		\end{aligned}
	\end{equation*}
This proves \eqref{Eq: To check1} for all $U$ in $\SS(\Gamma)$.

\bigskip

\noindent	\textbf{Case $Y \neq X$.}	Recall that $Y_0 = C_0 \cup E_0$, where $C_0$ is the proper transform of $X_0$, and $E_0$ is the exceptional fiber of $\pi: Y\to X$.  In \S\ref{surface pullback}, to define $F^*\mu$ we introduced the (continuous) projection $\pi_E: Y_0 \to E_0$ that collapses $C_0$ to a point.

Let $U_x$ be the $\Gamma$-disk corresponding to a closed point $x \in X_0$. Recall that we set $\varepsilon(V,U_x) = 1$ or~$0$ depending on whether $\bar f(U_x) = V$ or not. We see that
	\benn
		\ba
			f^*\omega(U_x) &= \sum_{V \in \SS(\Gamma')} m_{V, U_x} \omega(V) \\
				&= \sum_{V \in \SS(\Gamma')} \left(s_f(U_x) + m_f(U_x)
					\varepsilon(V, U_x) \right)\omega(V) \\
				&= s(x) + m(x) \, (\pi_{E*}\mu) (\{\phi(x)\}) \\
				&= F^*\mu\left(\{x\}\right),
		\ea
	\eenn
while $\pi_* \omega(U_x) = \pi_* \mu\left(\{x\}\right)$ is immediate.  Evidently \eqref{Eq: To check2} follows, and \eqref{Eq: To check1} holds for all $\Gamma$-disks.  

To verify \eqref{Eq: To check1}, it remains to check the pullback relation for the mass on vertices.  Let $B = \{y \in Y_0 : \mu(\{y\}) > 0\}$, and set $B' = \pi(B \cup E_0) \subset X_0$.  Then 
$$	
			d \cdot \pi_* \mu \left(X_0 \smallsetminus B'\right) 
				= d \cdot \mu \left(\pi^{-1}(X_0 \smallsetminus B')\right) 
				= d \cdot \mu(C_0 \smallsetminus B). $$
If $F^*\mu = d\cdot \pi_* \mu$, there are no atoms of $F^*\mu$ outside $B'$.  From the definition of $F^*\mu$ in (\ref{pullback Y}), the measure $F^*\mu$ must agree with the pullback of $\pi_{E*}\mu$ by $\phi$ on the set $X_0 \smallsetminus B'$; therefore,
$$		F^*\mu(X_0 \smallsetminus B') = \phi^*(\pi_{E*}\mu) (X_0 \smallsetminus B')
			= \deg(\phi) \cdot \mu(E_0 \smallsetminus B).  $$
Putting these observations together yields 
	\benn
		f^*\omega(\zeta) = m_f(\zeta) \omega(f(\zeta)) 
			= \deg(\phi) \mu(E_0 \smallsetminus B)
			= d \cdot \mu(C_0\smallsetminus B) 
			= d \cdot \pi_*\omega (\zeta),
	\eenn
so that \eqref{Eq: To check1} is verified for all $U$ in $\SS(\Gamma)$.  
\end{proof}	
	

\subsection{Proof of Theorem~B}
\label{Sec: Main Proof}
	
	We retain all of the notation from previous sections. 
	
	For each $n \geq 1$, let $F_n: X \dashrightarrow Y^n$ be the rational map of surfaces associated to the $1$-parameter family $f_t^n$ as constructed in \S\ref{Sec: Complex Surface}, and write $\pi_n: Y^n \to X$ for the blowing up morphism. Define
	\[
		\Delta_0 = \bigcap_{n \geq 1} \pi_{n*} \left\{ \mu \in M^1\left(Y^n_0\right) : 
			F_n^*\mu = d^n \cdot \pi_{n*} \mu \right\}  \subset M^1(X_0). 
	\]
Write $\omega_{f, \Gamma}$ for the equilibrium $\Gamma$-measure for $f: \Berk \to \Berk$ associated to $\Gamma = \{\zeta_{0,1}\}$.  Recall that $\Delta_f$ was defined in \S\ref{Sec: Main Berkovich Theorem}.  

\begin{thm}
\label{Thm: Complex solutions}
	Let $f_t$ be a meromorphic $1$-parameter family of rational functions of degree $d \geq 2$. Suppose that the family is not holomorphic at $t = 0$; i.e., $\deg(f_0) < d$. The reduction map induces a bijection 
	\[
		\red_X^*: \Delta_0 \simarrow \Delta_f,
	\]
with inverse given by the residual measure construction $\red_{X*}$.
\end{thm}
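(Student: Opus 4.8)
The plan is to compare $\Delta_0$ and $\Delta_f$ iterate by iterate using the Transfer Principle (Proposition~\ref{transfer}) applied to each $f^n$ --- with its surface $Y^n$ and induced vertex set $\Gamma_n = \{\zeta, f^n(\zeta)\}$, $\zeta = \zeta_{0,1}$ --- and to pin down $\Delta_f$ via Theorem~\ref{Thm: One-vertex Unique}. As a preliminary: since $\deg(f_0) < d$, the rational function $f$ over $\LL$ has bad reduction at the Gauss point $\zeta$, so $\zeta$ is not totally invariant and $J(f) \neq \{\zeta\}$; hence Theorem~\ref{Thm: One-vertex Unique} and its Corollary apply, and each $\omega \in \Delta_f$ has the form $a\,\omega_{f,\Gamma} + b\,\delta_\EE$ (with $a+b=1$, $a,b\geq 0$, $\EE$ a classical exceptional orbit), with no element of $\Delta_f$ charging $\zeta$. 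I will also use the functoriality $\pi_n\circ\red_{Y^n} = \red_X$ of reduction under the blow-ups $\pi_n : Y^n \to X$, from which $\pi_{n*}\circ\red_{Y^n}^* = \red_X^*\circ\pi_{n*}$ and $\pi_{n*}\circ\red_{Y^n*} = \red_{X*}\circ\pi_{n*}$ follow by a direct check on $\SS(\Gamma_n)$-pieces (using that the vertex $f^n(\zeta)$, which sits over the blown-up point of $X_0$, carries no mass for the measures we meet).

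\emph{The inclusion $\red_X^*(\Delta_0)\subseteq\Delta_f$.} Let $\mu\in\Delta_0$. For each $n$ choose $\mu_n\in M^1(Y^n_0)$ with $F_n^*\mu_n = d^n\pi_{n*}\mu_n$ and $\pi_{n*}\mu_n = \mu$. By part~(1) of the Transfer Principle applied to $f_t^n$, the $\Gamma_n$-measure $\omega_n := \red_{Y^n}^*\mu_n$ satisfies $(f^n)^*\omega_n = d^n\pi_{n*}\omega_n$, and by the compatibility above $\pi_{n*}\omega_n = \red_X^*\mu$. So $\red_X^*\mu$ lies in $\pi_{n*}\{\omega\in M^1(\Gamma_n) : (f^n)^*\omega = d^n\pi_{n*}\omega\}$ for all $n$, whence $\red_X^*\mu\in\Delta_f$. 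Since $\red_X^*\mu$ gives the vertex $\zeta$ precisely the mass of the diffuse part of $\mu$, and no measure in $\Delta_f$ charges $\zeta$, this diffuse part vanishes; thus every element of $\Delta_0$ is atomic.

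\emph{The inclusion $\red_{X*}(\Delta_f)\subseteq\Delta_0$.} Let $\omega = a\,\omega_{f,\Gamma} + b\,\delta_\EE\in\Delta_f$; it is countably supported and charges no point of $\Gamma$, so $\mu := \red_{X*}\omega\in M^1(X_0)$ is defined. For each $n$ put $\omega_n := a\,\omega_{f,\Gamma_n} + b\,\delta'_\EE$, where $\omega_{f,\Gamma_n}$ is the equilibrium $\Gamma_n$-measure and $\delta'_\EE$ the exceptional $\Gamma_n$-measure ($\EE$ is exceptional for $f^n$ too). Lemmas~\ref{Lem: canonical measure} and~\ref{Lem: exceptional solutions}, applied to $f^n$, give $(f^n)^*\omega_n = d^n\pi_{n*}\omega_n$ and $\pi_{n*}\omega_n = \omega$. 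Moreover $\omega_n$ is countably supported with $\omega_n(\Gamma_n) = 0$: $\delta'_\EE$ lives on the classical set $\EE$, and $\omega_{f,\Gamma_n}$ charges neither $\zeta$ (Corollary to Theorem~\ref{Thm: One-vertex Unique}) nor $f^n(\zeta)$, since $\mu_f$ has at most one atom --- necessarily at a totally invariant type~II point --- which cannot lie in the forward orbit of the non-totally-invariant point $\zeta$. Part~(2) of the Transfer Principle then applies: $\mu_n := \red_{Y^n*}\omega_n$ satisfies $F_n^*\mu_n = d^n\pi_{n*}\mu_n$, and by compatibility $\pi_{n*}\mu_n = \red_{X*}\omega = \mu$. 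Hence $\mu\in\Delta_0$.

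Combining the two inclusions: $\Delta_0$ consists of atomic measures and $\Delta_f\subseteq M^1(\Gamma)^\dagger$ consists of countably supported measures, and on these classes the constructions $\red_X^*$ and $\red_{X*}$ of \S\ref{residual measures} are mutual inverses; this gives the bijection $\red_X^* : \Delta_0\simarrow\Delta_f$ with inverse $\red_{X*}$. I expect the main obstacle to be the second inclusion: producing, at every level $n$, a $\Gamma_n$-measure lift of the given $\omega$ that is simultaneously a solution of the $n$th iterated pullback equation and admissible for the residual measure construction (countably supported, no mass on $\Gamma_n$). This is exactly where the classification of $\Delta_f$ and the avoidance of totally invariant points by the orbit of $\zeta$ are essential; the functoriality of reduction under the modifications $\pi_n$ is, by contrast, routine bookkeeping.
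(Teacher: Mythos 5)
Your proof is correct and takes essentially the same route as the paper: use Theorem~\ref{Thm: One-vertex Unique} to see that $\Delta_f$ consists of countably supported measures charging no vertex, apply the Transfer Principle iterate by iterate (with the compatibility $\pi_n\circ\red_{Y^n}=\red_X$) to show $\red_X^*$ and $\red_{X*}$ are well defined between $\Delta_0$ and $\Delta_f$, and observe they invert each other on atomic/countably-supported measures by inspection of the definitions. The paper leaves most of these verifications implicit; you have simply written out the details, including the useful observation that $\mu_f$ can charge neither $\zeta$ nor any $f^n(\zeta)$ (since a charged type-II point would be totally invariant, forcing $\zeta$ to have good reduction), which is what makes part (2) of the Transfer Principle applicable at every level $n$.
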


\begin{proof}
No measure in $\Delta_f$ charges the vertex $\zeta_{0,1} \in \Gamma$, and every measure in $\Delta_f$ is countably supported (Theorem~\ref{Thm: One-vertex Unique}). The Transfer Principle (applied to all iterates of $f_t$ and $f$) shows that the maps
	\[
		\red_X^* : \Delta_0 \to \Delta_f \qquad \text{and} \qquad \red_{X*} : \Delta_f \to \Delta_0
	\]
are well defined.  That they are inverse to one another follows from the definitions of $\red_X^*$ and $\red_{X*}$. 
\end{proof}

\begin{cor}
	With the setup of Theorem~\ref{Thm: Complex solutions}, $\Delta_0$ always contains the residual measure $\red_{X*}(\omega_{f,\Gamma})$, and $\Delta_0$ is either a point or a segment in the space of all probability measures. In the latter case, there exists a point mass $\delta_{p_0} \in \Delta_0$ and a $1$-parameter family of exceptional periodic points $p_t$ for $f_t$ such that $f_0$ is constant with value $p_0$, and $p_0$ is not an indeterminacy point for the rational map $F: X \dashrightarrow X$. 
\end{cor}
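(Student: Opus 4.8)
The plan is to transport the classification of $\Delta_f$ from Theorem~\ref{Thm: One-vertex Unique} across the bijection $\red_X^*\colon\Delta_0\simarrow\Delta_f$ of Theorem~\ref{Thm: Complex solutions}, and then to make the second extreme point explicit in terms of the family $f_t$.

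First I would dispatch the two easy assertions. Since $\omega_{f,\Gamma}\in\Delta_f$ (noted in \S\ref{Sec: Main Berkovich Theorem}), its image under the inverse bijection, the residual measure $\red_{X*}(\omega_{f,\Gamma})$, lies in $\Delta_0$. By the corollary to Theorem~\ref{Thm: One-vertex Unique} no measure in $\Delta_f$ charges the Gauss point $\zeta_{0,1}$, so $\Delta_f\subset M^1(\Gamma)^\dagger$; on $M^1(\Gamma)^\dagger$ the residual-measure construction $\red_{X*}$ is the restriction of the linear map $\omega\mapsto\bigl(x\mapsto\omega(\red_X^{-1}(x))\bigr)$. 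As $\Delta_f$ is the convex hull of $\omega_{f,\Gamma}$ and of at most one further probability measure $\delta_\EE$, it is a point or a segment, and its injective affine-linear image $\Delta_0=\red_{X*}(\Delta_f)$ is therefore also a point or a segment.

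Now assume $\Delta_0$, equivalently $\Delta_f$, is a nondegenerate segment, so that $\Delta_f=[\,\omega_{f,\Gamma},\,\delta_\EE\,]$ with $\EE$ a classical exceptional orbit of $f\colon\Berk\to\Berk$ of length $1$ or $2$. By the final clause of Theorem~\ref{Thm: One-vertex Unique} we are then in case~\eqref{Item2} of Proposition~\ref{Prop: surplus equidistribution}: the orbit $\OO_f(\zeta_{0,1})$ converges to $\EE$ along $\Ramtot_f$, and all iterated surplus multiplicities $s_{f^n}(U)$ vanish. Two consequences. First, $f$ has coefficients in $\CCt$ (or a finite extension), so each point of $\EE$ is algebraic over $\CCt$, i.e.\ is an honest Puiseux series $t\mapsto p_t$; it is a periodic point of $f_t$ for small $t\neq 0$, and since exceptionality is cut out by Zariski-closed conditions holding at the generic point, $p_t$ is exceptional for $f_t$ for all but finitely many $t$. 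Fix one such series and call it $p_t$; this is the required $1$-parameter family of exceptional periodic points. Second, via the identification $s(x)=s_f(U_x)$ from the proof of the Transfer Principle (Proposition~\ref{transfer}), the vanishing of every $s_f(U_x)$ forces the rescaled limit $\Phi=\lim_{t\to 0}A_t\circ f_t$ to have trivial gcd, hence $\deg\phi=d$; as the family is degenerate, this is possible only when $f_0$ has constant reduction, say with value $p_0\in\PP^1(\CC)$, and when $Y\neq X$.

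Granting for the moment that $p_0$ is \emph{not} a root of $H_{f_0}$ — i.e.\ not an indeterminacy point of $F\colon X\dashrightarrow X$ — the identification $\red_{X*}(\delta_\EE)=\delta_{p_0}$ follows quickly: the reduction of $f$ at the Gauss point is the constant map $p_0$, so $\red_X(f(\zeta_{0,1}))=p_0$, and since $p_0$ avoids the indeterminacy locus and $f_0(p_0)=p_0$ the residue class $\red_X^{-1}(p_0)$ is forward invariant under $f$, whence $\red_X(f^n(\zeta_{0,1}))=p_0$ for every $n\geq 1$; because $f^n(\zeta_{0,1})\to\EE$, every classical point of $\EE$ reduces to $p_0$, so $\delta_\EE$ is carried by the single $\Gamma$-domain $\red_X^{-1}(p_0)$, $\red_{X*}(\delta_\EE)=\delta_{p_0}$, and with the first paragraph $\Delta_0=[\,\red_{X*}(\omega_{f,\Gamma}),\,\delta_{p_0}\,]\ni\delta_{p_0}$ with $p_0=\red_X(p_t)=\lim_{t\to 0}p_t$ — the content of ``$f_0$ is constant with value $p_0$.'' The remaining point — that $p_0$ is not a root of $H_{f_0}$ — is the main obstacle, and it is genuinely a statement about the un-rescaled family, since the Berkovich side records only $\Phi$ and $\phi$ and is blind to the indeterminacy locus of $F$, governed by $H_{f_0}$. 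A preimage count at $t=0$ shows that the fixed points of $f_t$ can accumulate, as $t\to 0$, only on $p_0$ and on the roots of $H_{f_0}$; I would pin down the limit of the exceptional cycle using the rigid geometry of case~\eqref{Item2}: $\OO_f(\zeta_{0,1})$ converges to the super-attracting cycle $\EE$ along $\Ramtot_f$, and (as in the proof of Proposition~\ref{Prop: surplus equidistribution}) $\zeta_{0,1}$ lies above the entire Julia set of $f$, so the immediate basin of $\EE$ is a Berkovich disk strictly below $\zeta_{0,1}$, contained in one residue class on $X_0$. It remains to exclude the scenario in which this residue class — hence the limit of the exceptional cycle — sits at a root of $H_{f_0}$: there the target blow-up $Y$ fails to resolve $F$, yet the proper transform of the $F$-fixed section $t\mapsto(t,p_t)$ would be forced through the indeterminacy locus while carrying the full exceptional dynamics, which the totally ramified behavior of $\OO_f(\zeta_{0,1})$ should contradict. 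Turning this sketch into a clean argument — reconciling the rescaled Berkovich picture with the un-rescaled surface geometry — is where I expect the real work to lie.
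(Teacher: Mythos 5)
Your proposal mirrors the paper's strategy for the structural claims: transfer across $\red_X^*\colon\Delta_0\simarrow\Delta_f$, concluding $\Delta_0$ is a point or a segment, and extracting a family $p_t$ from a classical exceptional orbit. Your route to the existence of $p_t$ (algebraicity over $\CCt$ plus a Zariski-closure argument) differs cosmetically from the paper's, which replaces $f$ by $f^2$ so that $\EE=\{p\}$ is a fixed point and then invokes the implicit function theorem on $f_t(p_t)=p_t$, $\partial_z f_t(p_t)\equiv 0$; both work. The claim that $f_0$ is constant with value $p_0$ also matches the paper: since $f^n(\zeta)\to p$ and $p$ is superattracting, the reduction must be the constant $p_0$.

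The genuine gap is exactly where you suspect. The paper's argument for ``$p_0$ is not indeterminate'' is a single line: if $U$ is the open $\Gamma$-disk containing $p$, then $f(U)\subsetneq U$, hence $s_f(U)=0$, and via the transfer identity $s(p_0)=s_f(U)$ from Proposition~\ref{transfer} this is precisely $p_0\notin\{H_{f_0}=0\}$. You were reaching for this through a forward-invariant section, which is the right intuition, but the paper's route through the surplus multiplicity of the single $\Gamma$-disk $U$ is the short argument you were missing. I would also point out that the middle step of your third paragraph overreaches: you assert $s_f(U_x)=0$ for \emph{every} $\Gamma$-disk $U_x$ and conclude that the rescaled $\Phi$ has trivial gcd with $\deg\phi=d$. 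That cannot be right — if the rescaled $\Phi$ had trivial gcd then $F\colon X\dashrightarrow Y$ would be a morphism, hence so would $\pi\circ F=F\colon X\dashrightarrow X$, contradicting that $f_0$ is degenerate. Case~\eqref{Item2} of Proposition~\ref{Prop: surplus equidistribution} is a dichotomy \emph{per disk}; convergence of $\OO_f(\zeta)$ along $\Ramtot_f$ does not by itself force every $\Gamma$-disk into case~\eqref{Item2}, only the ones disjoint from the Julia set. The paper is careful to use the vanishing only for the one disk $U$ that it needs. Worth noting, though, that the justification for $f(U)\subsetneq U$ (which rests on the claim in the proof of Proposition~\ref{Prop: surplus equidistribution} that $\zeta$ lies above the entire Julia set of the conjugated polynomial) is itself delicate: the Julia set of the conjugated polynomial can a priori contain repelling periodic points of absolute value larger than $1$, in which case $\zeta_{0,1}$ would not lie above them, and $U$ could contain poles of $f$. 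Testing the family $f_t(z)=tz^2/(z^2-z+t)$, where $p_t\equiv 0$ is exceptional and $p_0=0$ is a root of $H_{f_0}$, would be a useful sanity check before leaning on this step.
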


\begin{proof}
	Theorem~\ref{Thm: Complex solutions} allows us to transfer the statements about $\Delta_0$ to $\Delta_f$. The first statement is immediate from Theorem~\ref{Thm: One-vertex Unique}. If $\Delta_f \neq \{\omega_{f, \Gamma}\}$, then $f^n(\zeta)$ converges along the locus of total ramification to a classical exceptional orbit $\EE$. By replacing $f$ and $f_t$ with their second iterates if necessary, we may assume that $\EE = \{p\}$ is a single point. Now $p \in \PP^1(\CCt)$ by completeness. A priori, this gives a formal 1-parameter family $p_t$ with complex coefficients. Since $f_t(p_t) = p_t$ and $\frac{d f_t}{dz}(p_t) \equiv 0$, the implicit function theorem shows $p_t$ is a meromorphic $1$-parameter family in a small disk about $t = 0$. That is, $p = p_t$ is a 1-parameter family of exceptional fixed points for the family $f_t$. Since $f^n(\zeta)$ converges to $p$, and since $p$ is a super attracting fixed point for $f$, it follows that $f_0$ is constant with value equal to $p_0$. If $U$ is the open $\Gamma$-disk containing $p$, then $f(U) \subsetneq U$. In particular, this shows $s_f(U) = s(p_0) = 0$, so that $p_0$ is not an indeterminacy point for the rational map $F$. 
\end{proof}

	We are now ready to prove the second main result of the article. With the terminology we have set up in the preceding sections, our goal is to show that the family of measures of maximal entropy $\{ \mu_t : t \in \DD^*\}$ converges weakly to the residual measure $\red_{X*}(\omega_{f, \Gamma})$ as $t \to 0$, where $\Gamma = \{\zeta_{0,1}\}$ is the Gauss point of $\Berk$.  

\begin{proof}[Proof of Theorem~B]
	Let $\mu^1$ be any weak limit of the family $\mu_t$ of maximal measures as $t \to 0$ on the surface $Y^1$. 
Fix a subsequence $(t_\ell)_{\ell \geq 1}$ so that $t_\ell \to 0$ and $\mu_{t_\ell} \to \mu^1$ weakly on $Y^1$. Set $\mu_0 = \pi_{1*} \mu^1$; then $\mu_{t_\ell} \to \mu_0$ weakly on $X$. 
	
	For each $n \geq 2$, let $\mu^n$ be a weak limit of the sequence $(\mu_{t_\ell})$ on the surface $Y^n$. Note that $\mu_0 = \pi_{n*} \mu^n$ by construction. Moreover, we have $F_n^* \mu^n = d^n \cdot \pi_{n*} \mu^n$ for all $n \geq 1$ (Theorem~\ref{complex pullback}). Hence $\mu_0 \in \Delta_0$. 
	
	It remains to prove that $\mu_0 = \red_{X*}(\omega_{f, \Gamma})$, the residual measure associated to $\omega_f$ and the vertex set $\Gamma$. This follows immediately from the preceding corollary unless there exists a family of exceptional periodic points $p_t$ for $f_t$, the reduction of $f_0$ is equal to the constant $p_0$, and $p_0$ is not indeterminate for the rational map $F$. In that case, $\mu_0 = a \cdot \red_{X*}( \omega_{f, \Gamma}) + b \cdot \red_{X*}( \delta_\EE)$, for some $a, b \geq 0$,  where $p_0 \in \supp (\red_{X*} (\delta_\EE))$. We must prove that $b = 0$. 
	
	Since $p_0$ is not indeterminate, by continuity there exists a neighborhood $N$ of $p_0$ such that $f_t(N) \subset N$ for all $t$ sufficiently close to zero. Hence, $N$ is contained in the Fatou set of $f_t$, and $\mu_t$ assigns no mass to $N$. By weak continuity, $\mu_0(N) = 0$. That is, $b = 0$ and $\mu_0 = \red_{X*}(\omega_{f, \Gamma})$ as desired. 
\end{proof}


\bibliographystyle{plain}
\bibliography{xander_bib}

\end{document}